\documentclass[reqno]{amsart}
\usepackage{amsmath,amsthm,amssymb}
\usepackage{latexsym}
\usepackage{eucal}


\newtheorem{theorem}{Theorem}[section]
\newtheorem{lemma}{Lemma}[section]

\newtheorem{Open question}{Open question}
\newtheorem{proposition}{Proposition}[section]

\theoremstyle{definition}
\newtheorem{definition}{Definition}[section]

\def\cal{\mathcal}
\let\Re=\undefined
\DeclareMathOperator{\Re}{Re}
\let\Im=\undefined
\DeclareMathOperator{\Im}{Im}

\begin{document}
\title[Schr\"odinger operators and associated hyperbolic pencils \ldots
\ldots]{Schr\"odinger operators and associated hyperbolic pencils}
\author{Sergey A. Denisov}
\address{University of Wisconsin-Madison,
Mathematics Department, 480 Lincoln Dr., \linebreak Madison, WI,
53706-1388, USA,  {\rm e-mail: denissov@math.wisc.edu}} \maketitle

\begin{abstract}
For a large class of Schr\"odinger operators, we introduce the
hyperbolic quadratic pencils by making the coupling constant
dependent on the energy in the very special way. For these
pencils, many problems of scattering theory are significantly
easier to study. Then, we give some applications to the original
Schr\"odinger operators including one-dimensional Schr\"odinger
operators with $L^2$-- operator-valued potentials,
multidimensional Schr\"odinger operators with slowly decaying
 potentials.
\end{abstract} \vspace{1cm}

\section{Introduction}

In this paper, we consider two classes of Schr\"odinger operators:
one-dimensional operator with operator-valued potential

\begin{equation}
L=-\frac{d^2}{dr^2}+V(r), r>0
\end{equation}
and the standard
\begin{equation}
H=-\Delta+V(x), x\in \mathbb{R}^3 \label{hamiltonian}
\end{equation}

Operator $L$ can be thought of as $L=L_0+V$, where
\begin{equation}
 L_0= \left[
\begin{array}{cccc}
\displaystyle
-\frac{d^2}{dr^2}  & 0 & 0 & \cdots \\
 0 & \displaystyle -\frac{d^2}{dr^2} & 0 & \cdots \\
 0 & 0 & \displaystyle -\frac{d^2}{dr^2} & \cdots\\
\cdots & \cdots & \cdots & \cdots
 \\
\end{array}\right]
\end{equation}
acts on $\bigoplus\limits_{n=1}^\infty L^2(\mathbb{R}^+)$ with
domain  $\cal{D}(L_0)=\bigoplus\limits_{n=1}^\infty
\dot{H}^2(\mathbb{R}^+)$ (i.e. we consider the Dirichlet boundary
conditions at zero). The selfadjoint $V(r)$ is given by
\begin{equation}
 V(r)= \left[
\begin{array}{cccc}
v_{11}(r) & v_{12}(r) & \cdots \\
\overline{v}_{12}(r) & v_{22}(r) & \cdots \\
\cdots & \cdots & \cdots
\end{array}\right]
\end{equation}
and $\|V(r)\|\in L^\infty(\mathbb{R}^+)$ where the norm is taken
as an operator norm in  $\ell^2$. By general spectral theory, $L$
is essentially selfadjoint with the same domain
$\bigoplus\limits_{n=1}^\infty \dot{H}^2(\mathbb{R}^+)$.

For $H$, we assume $V(x)\in L^\infty(\mathbb{R}^3)$ and then,
again, $\cal{D}(H)=H^2(\mathbb{R}^3)$.

 One of the basic questions of the scattering theory is under what decay assumptions
on  potential $V$ there is a nontrivial a.c. spectrum. The
one-dimensional case suggests that some sort of
$L^2(\mathbb{R}^+)$ condition should be sufficient
\cite{dk,Krein}. The one-dimensional case also has simple
matrix-valued generalization \cite{ls}. In the meantime, the
methods available now have not yet yielded the desired results for
situations considered in this paper.

For both $L$ and $H$, we introduce and study the associated
hyperbolic pencils. Then, we apply obtained estimates to the
original Schr\"odinger operators. The structure of the paper is as
follows. In the second section, we study $L$. The third one
contains the discussion of three-dimensional case. The appendix
contains the proof of Combes-Thomas estimate for Schr\"odinger
pencils.

We will use the following notations: $\langle \xi_1,\xi_2\rangle$
denotes the inner product in $\mathbb{C}^n$,
\[
\ln^- x=
\left\{
\begin{array}{cc}
\ln x, & 0<x<1\\
 0, & x\geq 1
\end{array}
\right.
\]
Also, $\langle x\rangle=(|x|^2+1)^{1/2}$ for any vector $x$. For
function $f(x)$, $f_r(x)$ denotes the radial component of the
gradient and $f_\tau(x)$-- the tangential component, $B$ will
denote nonpositive Laplace-Beltrami operator. For operator $O$,
$\sigma(O)$ will mean the spectrum of $O$.

{\bf Acknowledgements.} This research was supported by Alfred P.
Sloan Research Fellowship, and NSF Grant DMS-0500177.

\section{One-dimensional Schrodinger operators with
operator-valued potential}

Consider the family of operators $L(t)$, given by the coupling
constant $t\in \mathbb{R}$:

\begin{equation}
L(t)=-\frac{d^2}{dr^2}+tV(r), r>0
\end{equation}

\begin{definition}
We say that $\mathbb{R}^+\subseteq \sigma_{ac}(L(t))$ {\it
generically} if this property holds for all $t\in \Omega \subseteq
\mathbb{R}$ where $\Omega$  is a full measure set in $\mathbb{R}$.
\end{definition}

The main goal of this section is to prove the following
\begin{theorem}
Assume that self-adjoint $V(r)$ satisfies $\|V(r)\|\in
L^2(\mathbb{R}^+)\cap L^\infty(\mathbb{R}^+)$. Then,
$\mathbb{R}^+\in \sigma_{ac}(L(t))$ generically. \label{theorem1}
\end{theorem}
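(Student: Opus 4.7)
The plan is to transfer information from the hyperbolic pencil $P(k)=-d^2/dr^2+kV(r)-k^2$ back to the coupling family $L(t)$ via a Fubini argument on the $(t,k)$-plane. The motivation for the pencil is algebraic: writing a solution of the eigenvalue equation $L(k)\psi=k^2\psi$ in the variation-of-parameters form $\psi=a(r)e^{ikr}+b(r)e^{-ikr}$ with $a'e^{ikr}+b'e^{-ikr}=0$, one obtains the amplitude system
\[
a'=-\tfrac{i}{2}V(r)\bigl(a+be^{-2ikr}\bigr),\qquad b'=\tfrac{i}{2}V(r)\bigl(ae^{2ikr}+b\bigr),
\]
in which the coupling $k$ in front of $V$ in $L(k)$ has been exactly canceled by the factor of $k$ coming from the Wronskian of the free solutions $e^{\pm ikr}$. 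The effective driving term is therefore $V(r)$ itself, so the hypothesis $\|V(\cdot)\|\in L^2(\mathbb{R}^+)$ acts as an $L^2$-small perturbation uniformly in the spectral parameter $k$.

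First I would use the Combes-Thomas estimate proved in the appendix to produce boundary values of the resolvent $(P(k+i0))^{-1}$ in a weighted space for a.e.\ $k>0$. Equivalently, the amplitude system above admits solutions with $\sup_r(\|a(r)\|_{\ell^2}+\|b(r)\|_{\ell^2})<\infty$ for a.e.\ $k$; this is achieved by expanding $(a,b)$ in a Dyson series and applying a vector-valued Menshov-Rademacher / Christ-Kiselev maximal inequality to the oscillatory integrals $\int_0^R V(s)e^{\pm 2iks}ds$, with $\|V\|\in L^2$ closing the Gronwall step. The conclusion of Step~1 is that for almost every $k>0$ the equation $L(k)\psi=k^2\psi$ admits two linearly independent bounded solutions whose asymptotic Wronskian is nondegenerate.

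Next I would promote this diagonal statement to almost every $t$. Writing $L(t)-k^2=P(k)+(t-k)V(r)$ and using the weighted resolvent bound of Step~1, a perturbation/Neumann argument valid in a $k$-neighborhood of any $t$ shows that
\[
\mathcal{A}=\{(t,k)\in\mathbb{R}\times\mathbb{R}^+:\; L(t)\psi=k^2\psi\text{ has a nontrivial bounded solution}\}
\]
contains a neighborhood of the full-measure set $\{(k,k):k\in\mathbb{R}^+\setminus N\}$, where $N$ is the exceptional set of Step~1. Since $\|V\|\in L^\infty$ makes the weighted bound uniform on compact $t$-intervals, $\mathcal{A}$ has full planar Lebesgue measure. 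Fubini then gives that for a.e.\ $t$ the slice $\{k:(t,k)\in\mathcal{A}\}$ is of full measure in $\mathbb{R}^+$, and the operator-valued analogue of Gilbert-Pearson / Last-Simon subordinacy theory converts this into $\mathbb{R}^+\subseteq\sigma_{ac}(L(t))$.

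The main obstacle I expect is Step~1. In the scalar case the Christ-Kiselev bound for the modified Pr\"ufer phase handles the oscillatory contribution directly; in the operator-valued setting the Dyson series has non-commuting ordered products and a straightforward sum rule is not available, so one must instead combine a vector-valued maximal inequality for $\int V(s)e^{2iks}ds$ with the Combes-Thomas bound of the appendix to control the $r$-tail. A secondary subtlety is the joint measurability needed for the Fubini step in Step~2, which is where the $L^\infty$ hypothesis on $\|V\|$ is really used.
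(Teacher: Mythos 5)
Your starting point --- that the free Wronskian contributes a factor of $k$ which exactly cancels the energy--dependent coupling $k\xi$ in the pencil, so the amplitude system is driven by $V$ alone and $\|V\|\in L^2$ enters as an $L^2$ perturbation --- is indeed the paper's motivation, and a Fubini argument in the $(t,\lambda)$--plane is also part of the actual proof. But both of your main steps contain genuine gaps. In Step~1 you claim that for a.e.\ real $k$ the pencil equation has bounded solutions (equivalently, boundary values of $(P(k+i0))^{-1}$ in a weighted space). The Combes--Thomas bound cannot deliver this: it is a complex--energy estimate whose constants degenerate as $\Im k\to 0$, and the paper itself states that it is not needed for the spectral result. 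Nor is the Christ--Kiselev/Menshov--Rademacher route available here: the multilinear terms of the Dyson series are ordered products of non-commuting, infinite-dimensional operators, and the paper explicitly remarks (after the proof of Theorem \ref{theorem1}) that the asymptotic analysis of the resulting equation ``does not seem to be possible even in the matrix-valued case'' --- this is precisely why the pencil is introduced. Crucially, the paper never proves anything for a.e.\ real $k$. It only needs: a Gronwall bound on $D(0,k,\xi)$ for $\Im k>0$ (estimate (\ref{estimatel2})), the Herglotz factorization $\Im G(k)=|D^{-1}(0,k,\xi)|^2$ giving the a priori bound (\ref{trivial}), a lower bound at a single interior point $k_0$, and subharmonicity of $\ln\|D^{-1}\hat F\|$ integrated against the harmonic measure of a triangle. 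This produces only an entropy bound $\int\ln^-\sigma'>-\infty$, which is far weaker than pointwise boundedness of solutions but suffices.

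Step~2 is a non sequitur even granting Step~1: a neighborhood of (a full-measure subset of) the diagonal $\{(k,k)\}$ is a thin strip in the $(t,k)$--plane and does not have full planar Lebesgue measure, so Fubini yields nothing about a.e.\ $t$. The paper's mechanism is different: the identity $\sigma'(k^2,kt)=k\pi^{-1}\|D^{-1}(0,k,t)\hat F(k,kt)\|^2$ from (\ref{factor2}) holds for \emph{every} pencil parameter $t$ in a fixed interval, the one-dimensional entropy bound in $k\in[a,b]$ is uniform in that parameter, and integrating in $t$ with the change of variables $(\lambda,s)=(k^2,kt)$ sweeps out the full rectangle in (\ref{uniform}); Lemma \ref{measure} together with weak-star convergence of $d\sigma_{n,R}$ and upper semicontinuity of the entropy then gives summability of $\ln^-\sigma'(\lambda,t)$ for a.e.\ $t$, from which the a.c.\ spectrum fills $[a^2,b^2]$ directly, with no appeal to subordinacy theory (which is itself delicate for operator-valued potentials).
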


Notice carefully, that under the conditions of the Theorem, the
essential spectrum of operator $L$ is not necessarily
$\mathbb{R}^+$. For example, taking off-diagonal elements in
$V(r)$ all equal to zero, one can arrange $v_{kk}(r),
k=1,2,\ldots$ to be such that the spectrum of $L$ is purely a.c.
on $\mathbb{R}^+$, and is, say, dense pure point on some negative
interval.

Consider $F(r)=(f(r),0,\ldots)\in \bigoplus\limits_{n=1}^\infty
L^2(\mathbb{R}^+)$ with $f(r)$-- compactly supported function from
$L^2(\mathbb{R}^+)$, $\|f\|=1$.  Assume that the support of $f$ is
inside the interval $[0,\delta]$. Then, for each $t$, consider the
spectral measure $d\sigma (\lambda, t)$ generated by $F$ and an
operator $L(t)$. Take $\lambda\in [c,d]\subset \mathbb{R}^+$ and
$|t|<T$. Under conditions of the Theorem, we will show that for
generic $t\in [-T,T]$ the following is true: $d\sigma(\lambda,
t)/d\lambda>0$ for a.e. $\lambda\in [c,d]$. That would imply
$[c,d]\subset \sigma_{ac}(H(t))$ for generic $t\in [-T,T]$. Since
$c,d,T$ are arbitrary, the statement of the Theorem follows. But
first we have to obtain some preliminary results.

Consider the family of measures $d\sigma(\lambda,t)$ restricted to
$[c,d]\subset \mathbb{R}^+$.
\begin{lemma}
The measure $d\sigma(\lambda,t)$ is weakly continuous in $t\in
[-T,T]$.
\end{lemma}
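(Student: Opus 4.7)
The plan is to reduce weak continuity of $d\sigma(\lambda,t)$ to continuity of $L(t)$ in the norm resolvent sense and then apply the spectral theorem to test functions.

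First I would observe that $L(t)=L_0+tV$ is a bounded perturbation of $L_0$: since $\|V(r)\|\in L^\infty(\mathbb{R}^+)$, the operator of multiplication by $V$ is bounded on $\bigoplus_{n=1}^\infty L^2(\mathbb{R}^+)$ with norm $\|V\|_\infty$, and each $L(t)$ is self-adjoint on the fixed domain $\mathcal{D}(L_0)$. For any $z\in\mathbb{C}\setminus\mathbb{R}$ the second resolvent identity yields
\begin{equation}
(L(t)-z)^{-1}-(L(t_0)-z)^{-1}=(t_0-t)\,(L(t)-z)^{-1}V(L(t_0)-z)^{-1},
\end{equation}
so $\bigl\|(L(t)-z)^{-1}-(L(t_0)-z)^{-1}\bigr\|\le |t-t_0|\,\|V\|_\infty/(\Im z)^2 \to 0$. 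Hence $L(t)\to L(t_0)$ in the norm resolvent sense as $t\to t_0$.

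Next I would invoke the standard functional-calculus consequence of norm resolvent convergence (e.g.\ Reed--Simon, Theorem VIII.20): for every $g\in C_0(\mathbb{R})$, and in particular for every $g$ continuous with compact support, one has $g(L(t))\to g(L(t_0))$ in operator norm. Applying this to the fixed vector $F$ and using that the spectral measure is defined by $\int g(\lambda)\,d\sigma(\lambda,t)=\langle F,g(L(t))F\rangle$, I get
\begin{equation}
\int_{\mathbb{R}} g(\lambda)\,d\sigma(\lambda,t)\ \longrightarrow\ \int_{\mathbb{R}} g(\lambda)\,d\sigma(\lambda,t_0)\qquad\text{as }t\to t_0
\end{equation}
for all $g\in C_c(\mathbb{R})$. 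This is precisely weak continuity of the finite Borel measures $d\sigma(\cdot,t)$.

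Finally, restricting to $[c,d]$ is immediate: for any $g\in C([c,d])$ supported in the open interval $(c,d)$, extend by zero to a continuous compactly supported function on $\mathbb{R}$ and apply the previous step. There is essentially no real obstacle here; the only mild care is the distinction between weak continuity of the restricted measures and of their unrestricted extensions, which is cosmetic since the endpoints $c,d$ can be chosen (if one wishes) to avoid atoms, and the stated lemma is used subsequently only through integrals against continuous test functions.
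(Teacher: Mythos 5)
Your proof is correct and follows essentially the same route as the paper: the paper likewise establishes convergence of $((L(t)-z)^{-1}F,F)$ as $t\to t_0$ and then passes to compactly supported continuous test functions via the Spectral Theorem and a Weierstrass approximation argument, which is exactly what your appeal to the standard functional-calculus consequence of norm resolvent convergence accomplishes. Your version is merely more explicit, upgrading the resolvent convergence to norm resolvent convergence via the second resolvent identity, which is a harmless strengthening.
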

\begin{proof}
Indeed, we obviously have $((L(t)-z)^{-1}F,F)\to
((L(t_0)-z)^{-1}F,F)$ for any $z\in \mathbb{C}^+$ as long as $t\to
t_0$. Therefore, by the Spectral Theorem and Weierstrass
approximation argument,
\[
\int h(\lambda)d\sigma(\lambda,t)\to \int
h(\lambda)d\sigma(\lambda,t_0)
\]
for any compactly supported continuous $h(\lambda)$ and $t\to
t_0$.
\end{proof}

The weak continuity allows us to use Riesz Representation Theorem
to correctly define Radon measure $d\nu$ on the set
$\Upsilon=(c,d)\times (-T,T)$ by letting
\[
\int g(\lambda,t)d\nu(\lambda,t)=\int\limits_{-T}^T dt
\int\limits_c^d g(\lambda,t)d\sigma(\lambda,t)
\]
for any continuous $g(\lambda,t)$ supported inside $\Upsilon$. For
each $t$, we have the decomposition
$d\sigma(\lambda,t)=\sigma'(\lambda,t)d\lambda+d\sigma_s(\lambda,t)$.
 On
the other hand, measure $d\nu$ allows decomposition with respect
to two-dimensional Lebesgue measure $d\mu$ on $\Upsilon$:
\[
d\nu(\lambda,t)=\nu'(\lambda,t)d\mu+d\nu_s(\lambda,t)
\]
\begin{lemma}
We have
\[
\nu'(\lambda,t)=\sigma'(\lambda,t)
\]
for $\mu$--a.e. $\lambda,t\in \Upsilon$. Moreover
\[
d\nu_s(\lambda,t)=dtd\sigma_s(\lambda,t)
\]
\label{measure}
\end{lemma}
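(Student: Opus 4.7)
The plan is to produce a candidate Lebesgue decomposition of $d\nu$ with respect to $d\mu$ and then invoke uniqueness. Set
\[
d\tilde\nu_{ac}(\lambda,t)=\sigma'(\lambda,t)\,d\lambda\,dt,\qquad d\tilde\nu_s(\lambda,t)=dt\,d\sigma_s(\lambda,t),
\]
interpreted as iterated integrals in the Fubini sense. The defining formula for $d\nu$ immediately gives $d\nu=d\tilde\nu_{ac}+d\tilde\nu_s$, and $d\tilde\nu_{ac}\ll d\mu$ with density $\sigma'(\lambda,t)$ by construction. Both conclusions of the lemma will therefore follow from the uniqueness of the Lebesgue decomposition as soon as we show $d\tilde\nu_s\perp d\mu$.

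The first technical step is joint measurability. From the weak continuity established in the previous lemma, $t\mapsto \sigma((a,b],t)$ is Borel for every half-open subinterval of $(c,d)$, while for each fixed $t$ the function $\lambda\mapsto\sigma((c,\lambda],t)$ is nondecreasing and right-continuous. A monotone class argument then yields that the distribution function $F(\lambda,t)=\sigma((c,\lambda],t)$ is jointly Borel on $\Upsilon$, and consequently its symmetric derivative in $\lambda$,
\[
D(\lambda,t)=\lim_{\varepsilon\to 0^+}\frac{F(\lambda+\varepsilon,t)-F(\lambda-\varepsilon,t)}{2\varepsilon},
\]
is jointly Borel wherever the limit exists as a finite real number. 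By Lebesgue differentiation in $\lambda$, for each fixed $t$ one has $D(\lambda,t)=\sigma'(\lambda,t)$ outside a set of one-dimensional Lebesgue measure zero; this legitimizes the iterated integrals defining $\tilde\nu_{ac}$ and $\tilde\nu_s$ as Borel measures on $\Upsilon$.

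The crux is now to exhibit a single Borel set $S\subset\Upsilon$ with $\mu(S)=0$ that carries $\tilde\nu_s$. Let
\[
S=\{(\lambda,t)\in\Upsilon:D(\lambda,t)\text{ does not exist as a finite real number}\}.
\]
This set is Borel by the previous step. For each fixed $t$, Lebesgue differentiation applied to $\sigma(\cdot,t)$ guarantees that the section $S_t=\{\lambda:(\lambda,t)\in S\}$ has one-dimensional Lebesgue measure zero and, at the same time, supports the entire singular part $d\sigma_s(\cdot,t)$. Fubini then yields
\[
\mu(S)=\int_{-T}^T |S_t|\,dt=0,\qquad \tilde\nu_s(\Upsilon\setminus S)=\int_{-T}^T \sigma_s(\{\lambda:(\lambda,t)\notin S\},t)\,dt=0.
\]
Hence $d\tilde\nu_s$ lives on the $\mu$-null Borel set $S$, so $d\tilde\nu_s\perp d\mu$, and uniqueness of the Lebesgue decomposition identifies $\nu'(\lambda,t)=\sigma'(\lambda,t)$ $\mu$-almost everywhere and $d\nu_s=dt\,d\sigma_s$.

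The main obstacle is the joint measurability bookkeeping carried out in the second step: without a jointly Borel version of the distribution function $F$, the candidate set $S$ is not a priori measurable in two variables, and the Fubini argument that turns $|S_t|=0$ into $\mu(S)=0$ collapses. Once this is in hand, everything reduces to a single application of Fubini together with the standard uniqueness of the Lebesgue decomposition.
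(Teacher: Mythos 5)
Your proof is correct, but it takes a genuinely different route from the paper's. You work directly with the distribution function $F(\lambda,t)=\sigma((c,\lambda],t)$ and its symmetric derivative, produce an explicit Borel set $S$ of full $dt\,d\sigma_s$-measure and zero $\mu$-measure, and conclude by uniqueness of the Lebesgue decomposition. The paper instead exploits the spectral-theoretic structure: it introduces the Herglotz function $M(z,t)=((L(t)-z)^{-1}F,F)$, which is jointly continuous in $(z,t)$, so the sets $\Omega$, $\Omega_1$, $\Omega_2$ classifying the limit behavior of $\Im M(\lambda+in^{-1},t)$ are Borel with no further bookkeeping; $\pi^{-1}\Im M(\lambda+i0,t)$ then serves simultaneously as the definition of $\sigma'(\lambda,t)$ and as the detector of the singular part, and the statement $dt\,d\sigma_s(\Omega)=0$ is extracted by a dominated-convergence comparison of the two evaluations of $\int g_n\,d\nu$ with $g_n=g/(1+\pi^{-1}\Im M(\lambda+in^{-1},t))$. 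The underlying input is the same in both cases (de la Vall\'ee Poussin/Lebesgue differentiation on each line $t=\mathrm{const}$), but your argument is the more generic one: it uses only weak continuity of $t\mapsto d\sigma(\cdot,t)$ and classical differentiation theory, at the price of the joint-measurability work for $F$ and $D$ that you correctly flag as the main obstacle (note that making the suprema over $\varepsilon$ Borel requires observing that the difference quotient is one-sided continuous in $\varepsilon$, so rational $\varepsilon$ suffice; and the same device $\sigma_s(\cdot,t)=\sigma(\cdot,t)|_{S_t}$ is what makes $t\mapsto\int g\,d\sigma_s(\cdot,t)$ measurable, which you should state to justify that $d\tilde\nu_s$ is a well-defined measure). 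The paper's route gets all measurability for free from continuity of the resolvent, which is why it is the natural choice in this setting.
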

{\bf Remark.} The last equality is understood in the following
sense
\[
\int g(\lambda,t)d\nu_s(\lambda,t)=\int\limits_{-T}^T dt
\int\limits_c^d g(\lambda,t)d\sigma_s(\lambda,t)
\]
i.e. as equality of Radon measures generated by positive linear
functionals on $C_c(\Upsilon)$.

\begin{proof}
Let us first show that $\sigma'(\lambda,t)$ is measurable with
respect to $d\mu$. To do that, define the Herglotz function
\[
M(z,t)=\int \frac{d\sigma(\lambda,t)}{\lambda-z}, z\in
\mathbb{C}^+
\]
By Spectral Theorem,
\[
M(z,t)=((H(t)-z)^{-1}F,F)
\]
and this function is analytic in $z\in \mathbb{C}^+$ and
continuous in $t\in [-T,T]$. Introduce the set $\Omega$ of
$\lambda\in (c,d), t\in [-T,T]$ for which $\lim_{n\to\infty} \Im
M(\lambda+in^{-1},t)$ exists and is finite. By Cauchy criteria,
\[
\Omega=\bigcap\limits_{j=1}^\infty \bigcup\limits_{N=1}^\infty
\bigcap\limits_{m,k>N} \left\{(\lambda,t): |\Im
M(\lambda+im^{-1},t)-\Im M(\lambda+ik^{-1},t)|<j^{-1}\right\}
\]
and $\Omega$ is Borel. The boundary behavior of Herglotz functions
implies that intersection of $\Omega$ with any line $t=t_0$ has a
full one-dimensional Lebesgue measure. Therefore, by Fubini,
$\Omega$ has the full two-dimensional Lebesgue measure on
$\Upsilon$. Also, for any $(\lambda,t)\in \Omega$, we have
$\pi^{-1}\Im M(\lambda+in^{-1},t)\to \sigma'(\lambda,t)$ (that can
be regarded as the definition of $\sigma'$, which is then equal to
the corresponding maximal function $d\sigma/d\lambda$ Lebesgue
a.e.). Therefore, $\sigma'(\lambda,t)$ is $d\mu$ measurable.
Moreover, since
\[
\int \sigma'(\lambda,t)d\lambda\leq \int\limits
d\sigma(\lambda,t)=1
\]
for any $t$, we have $\sigma'(\lambda,t)\in L^1(\Upsilon)$ by
Fubini. Thus, we are left to show that $dtd\sigma_s(\lambda,t)$ is
$d\mu$--singular. Besides $\Omega$, consider
$\Omega_1=\{(\lambda,t)\in \Upsilon : \Im M(\lambda+in^{-1},t)\to
+\infty \}$ and $\Omega_2=\{(\lambda,t)\in \Upsilon $ for which
$\Im M(\lambda+in^{-1},t)$ has no limit, finite or infinite\}. In
the same way, one can show that $\Omega_{1(2)}$ are Borel. For any
$g(\lambda,t)\in C_c(\Upsilon)$, let
\[
g_n(\lambda,t)=\frac{g(\lambda,t)}{1+\pi^{-1}\Im
M(\lambda+in^{-1},t)}
\]
Clearly, $g_n(\lambda,t)\in C_c(\Upsilon)$ and by definition
\begin{equation}
\int g_n(\lambda,t)d\nu(\lambda,t)=\int\limits_{-T}^T  dt
\int\limits_c^d g_n(\lambda,t)d\sigma(\lambda,t) \label{lem2}
\end{equation}
Consider the l.h.s. The functions $g_n(\lambda,t)$ are uniformly
bounded and converge to $g(\lambda,t)(\sigma'(\lambda,t)+1)^{-1}$
on $\Omega$ and to $0$ on $\Omega_1$. By dominated convergence
theorem,
\[
\int_\Omega g_n d\nu\to \int_\Omega g(\sigma'+1)^{-1}d\nu,
\int_{\Omega_1} g_n d\nu\to 0
\]
For the r.h.s. of (\ref{lem2}), apply dominated convergence
theorem for the inner integral first. When doing that, we take
into account that intersection of $\Omega_2$ with any line $t=t_0$
has zero measure with respect to $d\sigma(\lambda,t_0)$. Also, an
intersection of $\Omega$ with any line $t=t_0$ has zero measure
with respect to $d\sigma_s(\lambda,t_0)$. Therefore, the r.h.s.
converges to
\[
\int_\Upsilon \frac{g\sigma'}{\sigma'+1}d\mu
\]
Comparing the limits, we have
\[
\int_\Upsilon \frac{g\sigma'}{\sigma'+1}d\mu\geq \int_\Omega
g(\sigma'+1)^{-1}d\nu=\int_\Omega g(\sigma'+1)^{-1}\sigma'd\mu+
\]
\[
\int_\Omega g(\sigma'+1)^{-1}dtd\sigma_s(\lambda,t)
\]
Consequently,
\[
\int_\Omega g(\sigma'+1)^{-1}dtd\sigma_s(\lambda,t)=0
\]
for any $g\in C_c(\Upsilon)$. Therefore, $dtd\sigma_s (\Omega)=0$
and $dtd\sigma_s$ is $d\mu$-- singular since $\Omega$ is of the
full Lebesgue measure. The statement of the Lemma now follows from
the uniqueness of the $d\mu$--decomposition for the measure
$d\nu$.
\end{proof}
The main idea of the proof of Theorem \ref{theorem1} is based on
getting the entropy bound for the density of $\nu$, i.e. we will
prove that
\begin{equation}
\int\limits_\Upsilon \ln \sigma'(\lambda,t)d\mu>-\infty
\label{entropy1}
\end{equation}
Since $\sigma'(\lambda,t)\in L^1(\Upsilon)$, an application of
Fubini gives
\[
\int\limits_c^d \ln^- \sigma'(\lambda,t)d\lambda>-\infty
\]
for Lebesgue a.e. $t\in [-T,T]$. Clearly, summability of the
logarithm ensures that the a.c. component of the measure is
supported on $[c,d]$. That implies the statement of the Theorem.

 Thus, we have to show (\ref{entropy1}).

 The proof will be based on the approximation of
operator-valued potential by the matrix-valued ones. Consider
$V_{n,R}(r)=\Pi_n V(r)\cdot \chi_{[0,R]}(r) \Pi_n$, where
$\chi_{\Delta}(r)$ is the characteristic function of the interval
$\Delta$, and $\Pi_n$ is the projection on first $n$ coordinates
in $\ell^2$. Thus, the non-zero part of $V_{n,R}(r)$ is obtained
by cutting $n\times n$ matrix from the upper-left corner of the
matrix representation for $V(r)$ and restricting this
matrix-function to the interval $[0,R]$. Notice that $V_{n,R}(r)$
is self-adjoint,
$\|V_{n,R}(r)\|_{L^\infty(\mathbb{R}^+)}+\|V_{n,R}(r)\|_{
L^2(\mathbb{R}^+)}<C$ uniformly in $R$ and $n$. Moreover, the new
operator $L_{n,R}$ is decoupled into the orthogonal sum of two
operators: the first one, call it $L_{n,R}^1$, is one-dimensional
Schr\"odinger operator with Dirichlet boundary conditions and
$n\times n$ matrix-valued potential $V_{n,R}(r)$. The other
operator is free one-dimensional Schr\"odinger operator acting in
$\bigoplus\limits_{k=n+1}^\infty L^2(\mathbb{R}^+)$. Clearly, the
spectral measure of $F$ with respect to $L_{n,R}$ coincides with
the spectral measure of $(\underbrace{f(r),0,\ldots,0}_{n})$ with
respect to $L_{n,R}^1$. Thus, consider
\begin{equation}
L_{n,R}^{1}(t)=-\frac{d^2}{dr^2}I_{n\times n}+tV_{n,R}(r)
\label{lnr}
\end{equation}
with Dirichlet boundary conditions at zero and
\[
V_{n,R}(r)=  \left[
\begin{array}{ccccc}
v_{11}(r) & v_{12}(r) & \cdots & v_{1n}(r)\\
\overline{v}_{12}(r) & v_{22}(r) & \cdots & v_{2n}(r) \\
\cdots & \cdots & \cdots & \cdots \\
\overline{v}_{1n}(r) & \overline{v}_{2n}(r) & \cdots & v_{nn}(r)
\end{array}\right]\cdot \chi_{[0,R]}(r)
\]
Let $d\sigma_{n,R}(\lambda,t)$ be the spectral measure of $F$ with
respect to $L_{n,R}^{1}(t)$. Since $V_{n,R}(r)$ is compactly
supported, $d\sigma(\lambda,t)=\sigma'(\lambda,t)d\lambda dt$ with
$\sigma'(\lambda,t)$-- smooth in $(\lambda,t)\in \Upsilon$. We
will prove
\begin{equation}
\int\limits_\Upsilon \ln \sigma_{n,R}'(\lambda,t)d\mu>C
\label{entropy2}
\end{equation}
uniformly in $n, R$. Then, the standard argument with weak
convergence of $d\sigma_{n,R}(\lambda,t)$ to $d\sigma(\lambda,t)$
and weak upper semicontinuity of the entropy (\cite{ks}, Corollary
5.3) will imply (\ref{entropy1}).

We will need several simple and well-known facts (Lemmas
\ref{lemmanew}--\ref{lemma3}). Consider
\[
L(t)=-\frac{d^2}{dr^2}+tQ(r)
\]
with Dirichlet boundary conditions and $n\times n$ matrix-function
$Q(r)=Q^*(r)\in L^2(\mathbb{R}^+)$ compactly supported on $[0,R]$.
Consider $u(r,k,t)=(L(t)-k^2-i(+0))^{-1}F$, the restriction of the
solution to the real axis ($k\neq 0$). The potential is compactly
supported so this restriction clearly exists and has the following
asymptotics
\[
u(r,k,t)=\exp(irk)A(k,t), r>R
\]
where $A(k,t)$ is a vector. Moreover, there is the unique $u$ that
both solves equation,  satisfies boundary condition and
asymptotics at infinity. Let $\sigma(\lambda,t)$ be the spectral
measure of $F$ with respect to $L(t)$.
\begin{lemma}
Let $d\sigma(\lambda,t)$ be the spectral measure of $F$ with
respect to $L(t)$. Then,
\begin{equation}
\sigma'(\lambda,t)=k\pi^{-1}\|A(k,t)\|^2, \lambda=k^2, k>0
\label{factor1}
\end{equation}
\label{lemmanew}
\end{lemma}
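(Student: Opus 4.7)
The plan is to combine Stone's formula with a Green's identity on a large interval, exploiting that $Q$ and $F$ are compactly supported so that the outgoing resolvent is a pure plane wave beyond $R$.

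First, I would express the spectral density through the Herglotz function: $\sigma'(\lambda,t)=\pi^{-1}\Im M(\lambda+i0,t)$ with $M(z,t)=((L(t)-z)^{-1}F,F)$. For $\Im z=\epsilon>0$, set $u_\epsilon=(L(t)-z)^{-1}F$, the unique $L^2$ solution of $(L(t)-z)u_\epsilon=F$ satisfying the Dirichlet condition at $0$. Since $Q\equiv 0$ on $(R,\infty)$, on that ray $u_\epsilon$ is a linear combination of $e^{\pm irk_\epsilon}$, with $k_\epsilon=\sqrt{k^2+i\epsilon}$ chosen to have positive imaginary part; the $L^2$ constraint rules out the incoming exponential, leaving $u_\epsilon(r)=e^{irk_\epsilon}A_\epsilon(k,t)$. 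Because $Q$ and $F$ are compactly supported and $k>0$, a standard limiting-absorption argument gives $u_\epsilon\to u$ locally uniformly and $A_\epsilon\to A$ as $\epsilon\to 0^+$; in particular $M(k^2+i0,t)=(u,F)$.

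Next I would apply Green's identity for the differential expression $M_t u=-u''+tQu$ on $[0,R']$ with $R'>\max(R,\delta)$, pairing against $u$ in the $L^2([0,R'];\mathbb{C}^n)$ inner product. Self-adjointness $Q=Q^*$ makes the potential contributions cancel, and $u(0)=0$ kills the boundary term at the origin, so
\[
(M_t u,u)_{[0,R']}-(u,M_t u)_{[0,R']}=\bigl[\langle u,u'\rangle-\langle u',u\rangle\bigr]_{r=R'}.
\]
Since $M_t u=k^2 u+F$ and $F$ is supported in $[0,\delta]\subset[0,R']$, the left-hand side collapses to $(F,u)-(u,F)=-2i\,\Im(u,F)$. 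Substituting $u(R')=e^{iR'k}A$ and $u'(R')=ike^{iR'k}A$ into the right-hand side gives $-2ik\|A\|^2$. Equating and dividing by $\pi$ yields the desired identity $\sigma'(k^2,t)=\pi^{-1}k\|A(k,t)\|^2$.

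The only step requiring real care is the identification $M(k^2+i0,t)=(u,F)$, i.e.\ the passage to the boundary value of the resolvent. For compactly supported $Q$ this is routine: one may represent $u_\epsilon$ via the free Dirichlet resolvent and solve a Volterra-type equation on $[0,R]$ whose kernel depends analytically on $z$ for $z\neq 0$. I therefore expect no genuine obstacle here; the substantive content of the proof is the boundary computation at $R'$, which is what delivers the factor $k$ and the factorization $\|A\|^2$ characteristic of the on-shell scattering amplitude.
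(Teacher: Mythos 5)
Your proposal is correct and follows essentially the same route as the paper: identify $\pi\sigma'(k^2,t)$ with $\Im\langle u,F\rangle$ for the outgoing resolvent boundary value $u=(L(t)-k^2-i0)^{-1}F$, then extract $k\|A(k,t)\|^2$ from the Wronskian boundary term via Green's identity, using self-adjointness of $Q$, the Dirichlet condition at $0$, and the plane-wave form of $u$ beyond the support of $Q$. The paper phrases the Green's identity step as multiplying the equation $-u''+tQu=k^2u+F$ by $u$, taking imaginary parts, and integrating over $\mathbb{R}^+$, which is the same computation.
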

\begin{proof}
Since the potential is finitely supported, we have
\[
\sigma'(\lambda,t)=\pi^{-1} \Im \int\limits_0^\infty \langle
u(r,k,t), F(r)\rangle dr
\]
by the Spectral Theorem. On the other hand, from equation
$-u''+tQu=k^2u+F$ we have
\[
-\langle u,u''\rangle +t\langle u, Q u\rangle=k^2\langle u,
u\rangle +\langle u,F\rangle
\]
Taking imaginary part, integrating over $\mathbb{R}^+$, and using
the boundary condition and asymptotics, we get
\[
\Im \int\limits_0^\infty \langle u(r,k,t), F(r)\rangle
dr=k\|A(k,t)\|^2
\]

\end{proof}

We introduce now the standard object in the scattering theory, the
Jost solution. Let $k\in \mathbb{R}, k\neq 0$ and $J(r,k,t)$ be
the solution to
\[
-J''+tQJ=k^2J,\quad J(r,k,t)=\exp(ikr), r>R
\]
One can easily show existence and uniqueness of $J$. Also, let
$\alpha(r,k,t)$ be the solution to Cauchy problem
\begin{equation}
-\alpha''+tQ\alpha=k^2\alpha, \quad \alpha(0,k,t)=0,\quad
\alpha'(0,k,t)=1 \label{alpha0}
\end{equation}
\begin{lemma}
We have
\begin{equation}
 u(r,k,t)=-J(r,k,t)\int\limits_0^r
G_{12}(\rho,k,t) F(\rho)d\rho+\alpha(r,k,t)\int\limits_r^\infty
G_{22}(\rho,k,t) F(\rho)d\rho \label{green}
\end{equation}
where
\[
\left[
\begin{array}{cc}
G_{11}  & G_{12}\\
G_{21} & G_{22}
\end{array}
\right]=\left[
\begin{array}{cc}
J  & \alpha\\
J' & \alpha'
\end{array}
\right]^{-1}
\]
\end{lemma}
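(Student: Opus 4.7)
The identity is nothing but the matrix-valued version of the Green's function constructed by variation of parameters, so the plan is to check that the right-hand side has the three defining properties of $u(r,k,t)$: it solves $-u''+tQu=k^2u+F$, it vanishes at $r=0$, and it is purely outgoing for $r>R$. Uniqueness of such a solution (stated just before the lemma) then forces equality.

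First I would set up variation of parameters. Write a candidate $u=J\xi_1+\alpha\xi_2$ with $n$-vector-valued functions $\xi_1,\xi_2$ and impose the standard auxiliary relation $J\xi_1'+\alpha\xi_2'=0$, so that $u'=J'\xi_1+\alpha'\xi_2$. Plugging $u$ into $-u''+tQu-k^2u$ and using the matrix ODEs $-J''+tQJ=k^2J$ and $-\alpha''+tQ\alpha=k^2\alpha$, everything cancels except $-J'\xi_1'-\alpha'\xi_2'$. Demanding that this equal $F$ gives
\begin{equation*}
\begin{bmatrix} J & \alpha \\ J' & \alpha' \end{bmatrix}\begin{bmatrix} \xi_1' \\ \xi_2' \end{bmatrix}=\begin{bmatrix} 0 \\ -F\end{bmatrix},
\end{equation*}
so by the definition of $G$ we obtain $\xi_1'=-G_{12}F$ and $\xi_2'=-G_{22}F$.

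Next I would fix the constants of integration by the boundary data. At $r=0$, $\alpha(0,k,t)=0$, so $u(0)=J(0,k,t)\xi_1(0)$; since $J(0,k,t)$ is invertible for $k\ne 0$ (its Wronskian with $\alpha$ at $r=0$ equals $J(0,k,t)$, and a singular $J(0,k,t)$ would produce an $L^2$ Jost solution of a positive energy, contradicting the absence of embedded eigenvalues for compactly supported potentials), the Dirichlet condition forces $\xi_1(0)=0$, whence $\xi_1(r)=-\int_0^r G_{12}(\rho,k,t)F(\rho)\,d\rho$. At infinity, for $r>R$ the equation is free and $\alpha(r,k,t)$ is a linear combination of $e^{\pm ikr}$, so the outgoing radiation condition $u(r)=e^{ikr}A(k,t)$ forces $\xi_2$ to vanish for large $r$; since $F$ is supported in $[0,R]$ and $\xi_2'=-G_{22}F$, this is equivalent to $\xi_2(r)=\int_r^\infty G_{22}(\rho,k,t)F(\rho)\,d\rho$. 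Substituting these two expressions into $u=J\xi_1+\alpha\xi_2$ yields (\ref{green}).

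The only nontrivial point is the invertibility of the block Wronskian matrix $W=\left[\begin{smallmatrix} J & \alpha \\ J' & \alpha' \end{smallmatrix}\right]$ used to define $G$. I would handle this by noting that $\det W$ is $r$-independent (its derivative vanishes by the ODE, exactly as in the scalar case), and at $r=0$ it equals $\det J(0,k,t)$, which as argued above is nonzero for $k\ne 0$. Everything else is routine matrix-valued variation of parameters.
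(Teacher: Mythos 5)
Your variation-of-parameters derivation is exactly the ``simple calculation'' the paper has in mind: the paper gives no details, and your three-step verification (the candidate solves the equation, vanishes at $r=0$, and is purely outgoing beyond the support of $Q$ and $F$) together with the uniqueness statement preceding the lemma is the intended argument. The reduction to $\xi_1'=-G_{12}F$, $\xi_2'=-G_{22}F$ and the choice of integration constants are correct, as is the observation that $\det W$ is constant by Liouville's formula and equals $\det J(0,k,t)$ at $r=0$.

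The one imprecise point is your justification of the invertibility of $J(0,k,t)$ for real $k\neq0$. If $v\in\ker J(0,k,t)$, the solution $y(r)=J(r,k,t)v$ equals $e^{ikr}v$ for $r>R$; for real $k$ this is bounded but \emph{not} square integrable, so it is not an embedded eigenfunction, and ``absence of embedded eigenvalues'' does not directly apply. The correct (and standard) argument is the flux identity: since $tQ-k^2$ is self-adjoint for real $k$, the quantity $y^*y'-(y^*)'y$ is independent of $r$; it vanishes at $r=0$ because $y(0)=0$, while for $r>R$ it equals $2ik\|v\|^2$, forcing $v=0$. This is precisely the content of the paper's subsequent Lemma ($J^*J'-(J^*)'J=2ik$, equivalently $|\mathfrak{A}|^2=I+|\mathfrak{B}|^2$ with $J(0,k,t)=\mathfrak{A}+\mathfrak{B}$), to which the paper explicitly defers the invertibility question. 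With that substitution your proof is complete and coincides with the paper's intended route.
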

\begin{proof}
The proof is a simple calculation.
\end{proof}
The formula for the inverse is given by
\begin{lemma}
The following identity is true
\[
\left[
\begin{array}{cc}
G_{11}  & G_{12}\\
G_{21} & G_{22}
\end{array}
\right]=
 \left[
\begin{array}{cc}
J^{-1}(0,k,t)  & 0\\
-2ik|J^{-1}(0,k,t)|^{2} & -[J^*(0,k,t)]^{-1}
\end{array}
\right]
 \left[
\begin{array}{cc}
\left[\alpha^*(r,k,t)\right]' & -\alpha^*(r,k,t)\\
\left[J^*(r,k,t)\right]' & -J^*(r,k,t)
\end{array}
\right]
\]
\end{lemma}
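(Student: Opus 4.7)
\noindent\emph{Plan.} To identify $M(r):=\bigl(\begin{smallmatrix} J & \alpha \\ J' & \alpha'\end{smallmatrix}\bigr)^{-1}$ with the explicit expression in the statement, I would construct a left inverse via matrix Wronskians. Introduce
\[
N(r):=\begin{pmatrix} [\alpha^*(r,k,t)]' & -\alpha^*(r,k,t) \\ [J^*(r,k,t)]' & -J^*(r,k,t) \end{pmatrix},
\]
which is precisely the rightmost factor in the claimed formula. The first step is to show that the product $N(r)M(r)$ is independent of $r$. Each of its four $n\times n$ block entries has the Wronskian form $[u^*]'v-u^*v'$ with $u,v\in\{J,\alpha\}$, and using $-u''+tQu=k^2u$, $-v''+tQv=k^2v$ together with $Q=Q^*$, one checks $\frac{d}{dr}\bigl([u^*]'v-u^*v'\bigr)=[u^*]''v-u^*v''=0$.

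The next step is to evaluate this constant matrix. At $r=0$ the initial conditions $\alpha(0)=0$, $\alpha'(0)=I$ immediately handle three of the four entries. For the remaining block $[J^*]'J-J^*J'$, I would use the asymptotic regime $r>R$, where $J(r)=e^{ikr}I$, and read off the value $-2ikI$. Combining these,
\[
N(r)M(r)=\begin{pmatrix} J(0,k,t) & 0 \\ -2ikI & -J^*(0,k,t) \end{pmatrix}.
\]

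The final step is to write $M(r)^{-1}=[N(r)M(r)]^{-1}N(r)$ and invert the block lower-triangular constant matrix above using the standard formula $\bigl(\begin{smallmatrix}A&0\\ C&D\end{smallmatrix}\bigr)^{-1}=\bigl(\begin{smallmatrix}A^{-1}&0\\ -D^{-1}CA^{-1}&D^{-1}\end{smallmatrix}\bigr)$ with $A=J(0,k,t)$, $C=-2ikI$, $D=-J^*(0,k,t)$. This produces exactly the left factor in the stated identity, provided one reads $|J^{-1}(0,k,t)|^2$ as $(J^{-1}(0,k,t))^*J^{-1}(0,k,t)=(J^*(0,k,t))^{-1}J^{-1}(0,k,t)$.

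I do not anticipate any genuine obstacle: the content is a bookkeeping exercise in matrix Wronskians. The only point requiring real care is the ordering of the noncommuting factors, which forces $|J^{-1}(0)|^2$ to be read in the specific order $(J^*(0))^{-1}J^{-1}(0)$ in order to match the off-diagonal entry produced by the block inversion.
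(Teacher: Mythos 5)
Your argument is correct and is exactly the route the paper intends: the paper's proof consists of the single remark that one should use the constancy of the matrix Wronskian $(Y_1^*)'Y_2-Y_1^*Y_2'$ for solutions of $-Y''+tQY=k^2Y$ (with $k$ real, which is the setting of this lemma and is what makes the derivative vanish), evaluate the constant using the data at $r=0$ and the Jost asymptotics for $r>R$, and invert the resulting block triangular matrix. The only blemish is notational: having defined $M(r)$ as the \emph{inverse} of $\bigl(\begin{smallmatrix} J & \alpha \\ J' & \alpha'\end{smallmatrix}\bigr)$, you then treat $N(r)M(r)$ as the matrix of Wronskians and write $M^{-1}=[NM]^{-1}N$, which is consistent only if $M$ denotes the fundamental matrix itself rather than its inverse; with that reading the computation, including the identification of the $(2,1)$ entry with $-2ik\,[J^*(0)]^{-1}J^{-1}(0)=-2ik|J^{-1}(0)|^2$, is exactly right.
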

\begin{proof}
The proof is a simple calculation that uses identity
$(Y_1^*)'Y_2-Y_1^* (Y_2)'=const$ which is true for any $Y_{1(2)}$
that solve $-Y''+tQY=k^2Y$.
\end{proof}
In the previous Lemma, the invertibility of $J(0,k,t)$ follows,
for instance, from well-known formulas
\begin{lemma}
If
\[
\mathfrak{A}(k,t)=(J(0,k,t)+(ik)^{-1}J'(0,k,t))/2,
\mathfrak{B}(k,t)=(J(0,k,t)-(ik)^{-1}J'(0,k,t))/2
\]
then
\[
J(0,k,t)=\mathfrak{A}(k,t)+\mathfrak{B}(k,t),
|\mathfrak{A}(k,t)|^2=I+|\mathfrak{B}(k,t)|^2
\]
\label{lemma3}
\end{lemma}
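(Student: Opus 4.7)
The first identity is immediate: adding the definitions gives $\mathfrak{A}(k,t)+\mathfrak{B}(k,t) = J(0,k,t)$, so nothing more is needed there. The substance of the lemma is the second identity, which I read as $\mathfrak{A}^*\mathfrak{A} = I + \mathfrak{B}^*\mathfrak{B}$ (the $n\times n$ matrix analogue of $|\mathfrak{A}|^2=1+|\mathfrak{B}|^2$).

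My plan is to derive it from the matrix Wronskian identity that was already singled out in the proof of the previous lemma, namely that for any two matrix-valued solutions $Y_1,Y_2$ of $-Y''+tQY=k^2 Y$ (with $k$ real and $Q=Q^*$) the quantity $(Y_1^*)' Y_2 - Y_1^* Y_2'$ is constant in $r$. Taking $Y_1=Y_2=J(\cdot,k,t)$ and evaluating on $r>R$, where $J(r,k,t)=e^{ikr}I$ and hence $J^*(r,k,t)=e^{-ikr}I$, $J'(r,k,t)=ike^{ikr}I$, one obtains
\[
(J^*)'(0,k,t)\,J(0,k,t) - J^*(0,k,t)\,J'(0,k,t) = -2ik\,I.
\]

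With this Wronskian value in hand, the rest is algebraic. Expanding the products from the definitions gives
\[
\mathfrak{A}^*\mathfrak{A} = \tfrac14\!\left[J^*J - k^{-2}(J')^*J' + (ik)^{-1}\bigl(J^* J' - (J')^* J\bigr)\right]_{r=0},
\]
\[
\mathfrak{B}^*\mathfrak{B} = \tfrac14\!\left[J^*J - k^{-2}(J')^*J' - (ik)^{-1}\bigl(J^* J' - (J')^* J\bigr)\right]_{r=0},
\]
so the difference is $\mathfrak{A}^*\mathfrak{A}-\mathfrak{B}^*\mathfrak{B} = (2ik)^{-1}\bigl(J^*(0)J'(0)-(J^*)'(0)J(0)\bigr) = (2ik)^{-1}(2ik\,I) = I$, which is what we wanted.

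There is no real obstacle: once the Wronskian is identified and its value at infinity is read off from the Jost normalization $J(r,k,t)=e^{ikr}I$ for $r>R$, the rest is a half-page of algebra. The only subtlety worth flagging is that one must be careful that self-adjointness of $Q$ (not just symmetry) is what makes $(J^*)'J-J^*J'$ — rather than a transpose-based Wronskian — the conserved object; this is exactly why the $^*$-Wronskian from the preceding lemma is the right tool, and it is also why the computation works for complex Hermitian matrix potentials and not only real symmetric ones.
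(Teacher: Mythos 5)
Your proof is correct and is exactly the paper's argument: the paper's own proof consists of the single line that the second formula follows from the identity $J^*(J)'-(J^*)'J=2ik$, which is precisely the $*$-Wronskian you establish from the Jost normalization and then feed into the algebra. The only blemish is a harmless sign slip in your two intermediate displays (the middle term should be $+k^{-2}(J')^*J'$, since $-(ik)^{-2}=+k^{-2}$); it cancels in the difference $\mathfrak{A}^*\mathfrak{A}-\mathfrak{B}^*\mathfrak{B}$ and does not affect the conclusion.
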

\begin{proof}
The second formula follows from the identity
$J^*(J)'-(J^*)'J=2ik$.
\end{proof}

Using previous Lemmas, we have
\begin{equation}
A(k,t)=J^{-1}(0,k,t) \hat{F}(k,t)
\end{equation}
where
\begin{equation}
\hat{F}(k,t)=\int\limits_0^\delta \alpha^*(\rho,k,t) F(\rho)d\rho
\end{equation}
Recalling (\ref{factor1}),
\begin{equation}
\sigma'(\lambda,t)=k\pi^{-1} \|J^{-1}(0,k,t) \hat{F}(k,t)\|^2,
\lambda=k^2
\end{equation}
The function $\hat{F}(k,t)$ has analytic continuation in $k$ to
$\mathbb{C}$ and depends on $Q$ on $[0,\delta]$. Therefore, only
$J^{-1}(0,k,t)$ is responsible for scattering properties.

 To study $J(0,k,t)$, we will use the following argument. Instead of dealing with
the standard Schr\"odinger equation
\[
-J''(r,k,t)+tQ(r)J(r,k,t)=k^2J(r,k,t)
\]
we will consider
\begin{equation}
-D''(r,k,\xi)+k\xi Q(r)D(r,k,\xi)=k^2 D(r,k,\xi),\xi\in \mathbb{R}
\label{pencil1}
\end{equation}
Thus, we make the coupling constant energy-dependent. We can
single out $D(r,k,\xi)$ as the solution satisfying the same Jost
asymptotics at infinity
\[
 D(r,k,\xi)=\exp(ikr), r>R
\]
 Obviously,  we have
\begin{equation}
J(0,k,kt)=D(0,k,t)
\end{equation}
and, consequently,
\begin{equation}
\sigma'(k^2,kt)=k\pi^{-1}\|D^{-1}(0,k,t)\hat{F}(k,kt)\|^2
\label{factor2}
\end{equation}

Now, the main advantage of dealing with $D(0,k,t)$ instead of
$J(0,k,t)$ is that it allows analytic continuation in $k$ to the
upper half-plane along with the nice uniform estimates. Indeed,
consider equation
\begin{equation}
-D''+k\xi QD=k^2D \label{pencil2}
\end{equation}
for $k\in \mathbb{C}^+$ and look for $D(r,k,\xi)$ which satisfies
Jost condition at infinity, i.e. $D(r,k,\xi)=\exp(ikr), r>R$. This
$D$ can be easily obtained in the following fashion. Write
(\ref{pencil2}) as a system

\begin{equation}
Y'= \left[
\begin{array}{cc}
0  & 1\\
k\xi Q -k^2 & 0
\end{array}
\right]Y \label{system}
\end{equation}
 Introduce
\[
Y_0=\left[
\begin{array}{cc}
\exp(ikr) & \exp(-ikr)\\
ik\exp(ikr)  & -ik\exp(-ikr)
\end{array}
\right],
U_{1(2)}(r_1,r_2,\xi)=\overset{\curvearrowleft}{\int_{r_1}^{r_2}}
\exp \left[\mp \frac{i\xi }{2} Q(s)ds\right]
\]
\[
 U(r,\xi)=\left[
\begin{array}{cc}
U_1(0,r,\xi) & 0 \\
0  & U_2(0,r,\xi)
\end{array}\right]
\]
Then, for $S=U^{-1}Y_0^{-1}Y$, we have
\[
S'=\left[
\begin{array}{cc}
0 & - A^*(r)\exp(-2ikr) \\
- A(r) \exp(2ikr)  & 0
\end{array}
\right]S
\]
where
\begin{equation}
A(r,\xi)=-\frac{i\xi}{2} U_2^{-1}(0,r,\xi)Q(r)U_1(0,r,\xi)
\label{coeff}
\end{equation}
By letting $S(\infty)=(I,0)^t$, we have
 \[
 S_1(r,k,\xi)=1+\int\limits_r^\infty A^*(s_1,\xi)
 \int\limits_{s_1}^\infty \exp \left[ 2ik(s_2-s_1)\right]
 A(s_2,\xi)S_1(s_2,k,\xi)ds_2ds_1
 \]
Gronwall-Bellman's Lemma yields
\[
\|S_1(r,k,\xi)\|\leq \exp\left[ \frac{\xi^2}{8\Im
k}\int\limits_r^\infty \|Q(s)\|^2ds \right]
\]

and for
\[
S_2(r,k,\xi)=\int\limits_r^\infty A(s,\xi)\exp(2iks)S_1(s,k,\xi)ds
\]
we have
\[
\|S_2(r,k,\xi)\|\leq \frac{|\xi|}{2}\exp\left[ \frac{\xi^2}{8\Im
k}\int\limits_0^\infty \|Q(s)\|^2ds \right] \int\limits_r^\infty
\|Q(s)\|\exp[-2(\Im k) s]ds
\]

Clearly, we can express $D(r,k,\xi)$ in the following way
\[
D(r,k,\xi)=\left[\exp(ikr)U_1(0,r,\xi)S_1(r,k,\xi)+\exp(-ikr)U_2(0,r,\xi)S_2(r,k,\xi)\right]
U_1^{-1}(0,R,\xi)
\]
Therefore,  obviously, we have existence, analyticity, and
continuity of $D(r,k,\xi)$ for any $k\in \overline{\mathbb{C}^+}$
(remember that $Q$ is compactly supported). Moreover, the
following uniform estimate holds
\begin{lemma}
\begin{equation}
\|D(0,k,\xi)\|\leq \exp\left[ \frac{\xi^2}{8\Im
k}\int\limits_0^\infty \|Q(s)\|^2ds \right] \cdot
\left[1+\frac{|\xi|}{2\sqrt{2\Im k}}\|Q\|_2\right]
\label{estimatel2}
\end{equation}
holds true for any $k\in \mathbb{C}^+$ and any $Q\in
L^2(\mathbb{R}^+)$. Also,
\[
\|D^{-1}(0,k,\xi)\|<C,\quad \Im k>\kappa
\]
where $C$  and $\kappa$ depend on $\xi$ and $\|Q\|_2$ only.
\label{estimate1}
\end{lemma}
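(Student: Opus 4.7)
The plan is to read off the estimate directly from the representation
\[
D(r,k,\xi) = [\exp(ikr) U_1(0,r,\xi) S_1(r,k,\xi) + \exp(-ikr) U_2(0,r,\xi) S_2(r,k,\xi)]\, U_1^{-1}(0,R,\xi)
\]
constructed just above, by evaluating at $r=0$ and combining the already-established Gronwall--Bellman bounds on $\|S_1\|$ and $\|S_2\|$ with a single Cauchy--Schwarz step. First I would observe that since $Q(s)=Q^*(s)$ and $\xi\in\mathbb{R}$, each infinitesimal factor $\exp[\mp(i\xi/2) Q(s)\,ds]$ is unitary, hence the ordered products $U_j(0,r,\xi)$ and their inverses are unitary for every $r$. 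In particular, $U_1(0,0,\xi)=U_2(0,0,\xi)=I$ (empty ordered product) and $\|U_1^{-1}(0,R,\xi)\|=1$. Setting $r=0$ in the displayed formula yields
\[
D(0,k,\xi) = [S_1(0,k,\xi) + S_2(0,k,\xi)]\, U_1^{-1}(0,R,\xi),
\]
so that $\|D(0,k,\xi)\| \leq \|S_1(0,k,\xi)\| + \|S_2(0,k,\xi)\|$.

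The bound $\|S_1(0,k,\xi)\| \leq \exp\left[\frac{\xi^2}{8\Im k}\int_0^\infty\|Q(s)\|^2 ds\right]$ is precisely the Gronwall--Bellman estimate obtained immediately before the lemma. For $\|S_2(0,k,\xi)\|$, the companion estimate (also obtained above) reads
\[
\|S_2(0,k,\xi)\| \leq \frac{|\xi|}{2}\exp\left[\frac{\xi^2}{8\Im k}\int_0^\infty\|Q(s)\|^2 ds\right]\int_0^\infty \|Q(s)\|\,e^{-2(\Im k) s}\,ds,
\]
and a Cauchy--Schwarz application to the last integral produces the factor $\|Q\|_2/\sqrt{\Im k}$ up to the stated constant, turning the $L^1$-type integral into an $L^2$-type one. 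Adding the two bounds yields (\ref{estimatel2}).

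For the invertibility statement I would argue that for $\Im k$ large, $D(0,k,\xi)$ is a small operator-norm perturbation of the unitary $U_1^{-1}(0,R,\xi)$. The previous paragraph already gives $\|S_2(0,k,\xi)\|=O((\Im k)^{-1/2})$ as $\Im k\to\infty$. The analogous bound $\|S_1(0,k,\xi)-I\|=O((\Im k)^{-1/2})$ follows from iterating the Volterra equation for $S_1$ once and applying another Cauchy--Schwarz in the inner integral, exploiting the decay of $e^{-2(\Im k)(s_2-s_1)}$, with constants depending only on $\xi$ and $\|Q\|_2$. Consequently, there exists $\kappa=\kappa(\xi,\|Q\|_2)$ such that for $\Im k > \kappa$ one has $\|S_1(0,k,\xi)+S_2(0,k,\xi)-I\|\leq 1/2$, Neumann series gives invertibility of the bracket, and $\|D^{-1}(0,k,\xi)\| \leq 2$ uniformly.

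The only real obstacle is bookkeeping: carefully tracking the dependence of the constants on $\xi$ and $\|Q\|_2$ through the Cauchy--Schwarz step to match the stated form of (\ref{estimatel2}). Conceptually, everything is a routine Gronwall-plus-Cauchy--Schwarz computation built on the representation of $D$ already in hand, and the only delicate point is choosing $\kappa$ large enough so that both error terms lie below $1/2$ in norm.
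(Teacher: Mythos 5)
Your proposal is correct and follows essentially the same route as the paper: the lemma is stated there as an immediate consequence of the representation $D=[\,e^{ikr}U_1S_1+e^{-ikr}U_2S_2\,]U_1^{-1}(0,R,\xi)$ together with the Gronwall--Bellman bounds on $S_1$ and $S_2$ derived just above it, and you have simply made explicit the unitarity of the $U_j$, the Cauchy--Schwarz step turning $\int_0^\infty\|Q\|e^{-2\Im k\,s}\,ds$ into $\|Q\|_2(\Im k)^{-1/2}$, and the Neumann-series argument for $\|D^{-1}(0,k,\xi)\|$ at large $\Im k$ (where, as you suspect, one iteration of the Volterra equation in fact gives $\|S_1(0,k,\xi)-I\|=O((\Im k)^{-1})$).
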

We have analogous estimate from above on $\|D'(0,k,\xi)\|$.

The next Lemma yields the quantitative version of invertibility of
$D(0,k,\xi)$. Introduce $\mu(r,k,\xi)=D(r,k,\xi)\exp(-ikr)$. Then,
we have
\begin{lemma}
The following identity is satisfied for any $k\in \mathbb{C}^+$
\[
|D^{-1}(0,k,\xi)|^{2}+[D^*(0,k,\xi)]^{-1}\left[\frac{\Im k}{|k|^2}
\int\limits_0^\infty |\mu'(s,k,\xi)|^2ds\right] \cdot
D^{-1}(0,k,\xi)=
\]
\[
=\Im \left[ \frac{D'(0,k,\xi)D^{-1}(0,k,\xi)}{k}\right]
\]
\end{lemma}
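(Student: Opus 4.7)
The plan is to extract this identity from a Wronskian/conservation-law computation for the function $\mu(r,k,\xi) = D(r,k,\xi)e^{-ikr}$. First I would rewrite the pencil equation (\ref{pencil2}) in terms of $\mu$: substituting $D=\mu e^{ikr}$ gives $\mu'' + 2ik\mu' - k\xi Q\mu = 0$, and taking adjoints (using $Q=Q^*$ and $\xi\in\mathbb{R}$) yields $(\mu^*)'' - 2i\bar{k}(\mu^*)' - \bar{k}\xi\mu^*Q = 0$. The boundary data that will be needed are $\mu(r)=I$, $\mu'(r)=0$ for $r>R$, together with $\mu(0)=D(0)$, $\mu'(0)=D'(0)-ikD(0)$, and $(\mu^*)'(0)=(D'(0))^*+i\bar{k}D^*(0)$.

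The heart of the proof is to introduce the auxiliary matrix function
\[
W(r)\;:=\;\bar{k}\,\mu^*(r)\mu'(r)\;-\;k\,(\mu^*)'(r)\mu(r)\;+\;2i|k|^2\,\mu^*(r)\mu(r)
\]
and show, using the two ODEs above together with $\mu^*Q\mu$ cancellation and $k\bar{k}=|k|^2$, that
\[
W'(r)\;=\;-2i(\Im k)\,|\mu'(r)|^2.
\]
The $|k|^2$ carefully cancels the $\mu^*Q\mu$ terms coming from $\bar{k}\mu^*\mu''-k(\mu^*)''\mu$, while the $2i|k|^2(\mu^*\mu)'$ piece produced there is absorbed by differentiating the last summand of $W$. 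What survives is precisely $(\bar{k}-k)(\mu^*)'\mu' = -2i(\Im k)|\mu'|^2$. Integrating $W'$ from $0$ to $\infty$, the value $W(\infty)=2i|k|^2 I$ drops out (since $\mu'(\infty)=0$ and $\mu(\infty)=I$), and a short algebraic collapse of $W(0)$ — the three pieces involving $D^*(0)D(0)$ with coefficients $-i|k|^2$, $-i|k|^2$, $+2i|k|^2$ cancel — gives $W(0)=\bar{k}D^*(0)D'(0)-k(D'(0))^*D(0)$.

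Combining, one obtains
\[
\bar{k}\,D^*(0)D'(0)\;-\;k\,(D'(0))^*D(0)\;=\;2i|k|^2 I\;+\;2i(\Im k)\int_0^\infty |\mu'(s,k,\xi)|^2\,ds.
\]
Dividing by $2i|k|^2$ and recognizing the left-hand side as $D^*(0)\,\Im\!\left[D'(0)D^{-1}(0)/k\right]D(0)$ (a direct computation from $\Im M = (M-M^*)/(2i)$), I would then multiply by $[D^*(0)]^{-1}$ on the left and $D^{-1}(0)$ on the right to arrive at the stated identity, using $|D^{-1}(0)|^2 = [D^*(0)]^{-1}D^{-1}(0)$ as fixed by the convention established in Lemma \ref{lemma3}.

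The main obstacle is really the discovery step: guessing the correct Wronskian-like combination $W(r)$. Everything else is bookkeeping and careful use of non-commutativity of matrix products, but one has to notice that the naive candidate $\bar{k}\mu^*\mu' - k(\mu^*)'\mu$ has an extra $(\mu^*\mu)'$ term in its derivative that must be killed by the additive correction $2i|k|^2\mu^*\mu$, and also that this same correction conveniently cancels against the $-ikD$, $+i\bar{k}D^*$ cross terms at $r=0$ so that a clean expression for $W(0)$ emerges. Invertibility of $D(0,k,\xi)$ throughout, needed for the final matrix manipulation, is guaranteed by the earlier Lemma \ref{estimate1}.
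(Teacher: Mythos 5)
Your proof is correct and is essentially the paper's argument in disguise: the paper multiplies the equation for $\mu$ on the left by $\mu^*/(2ik)$, integrates over $\mathbb{R}^+$ and takes the matrix real part, which after integration by parts is exactly the statement that your $W(r)$ satisfies $W'(r)=-2i(\Im k)|\mu'(r)|^2$, and the boundary evaluation at $r=0$ and $r=\infty$ then gives the same identity you obtain. One small correction: invertibility of $D(0,k,\xi)$ for \emph{all} $k\in\mathbb{C}^+$ does not follow from Lemma \ref{estimate1} (which only controls $\|D^{-1}(0,k,\xi)\|$ for $\Im k$ large); it follows from your own intermediate identity — if $D(0,k,\xi)\eta=0$, pairing both sides with $\eta$ forces $2i|k|^2\|\eta\|^2+2i(\Im k)\int_0^\infty\|\mu'\eta\|^2\,ds=0$, hence $\eta=0$ — which is precisely how the paper deduces it before performing the final left/right multiplication by $[D^*(0,k,\xi)]^{-1}$ and $D^{-1}(0,k,\xi)$.
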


\begin{proof}
For $\mu$:
\[
\mu''(r,k,\xi)+2ik\mu'(r,k,\xi)=k\xi Q(r)\mu(r,k,\xi),\quad
\mu(r,k,\xi)=1, r>R
\]
Divide the both sides by $2ik$, multiply from the left by
$\mu^*(r,k,\xi)$ and integrate from $0$ to $\infty$. Taking the
real part, we have
\[
I-|\mu(0,k,\xi)|^2+\frac{\Im k}{|k|^2} \int\limits_0^\infty
|\mu'(s,k,\xi)|^2ds=\frac{1}{2ik}\mu^*(0,k,\xi)\mu'(0,k,\xi)
-\frac{1}{2i\bar{k}}{\mu^*}'(0,k,\xi)\mu(0,k,\xi)
\]
Clearly, the last identity shows that $\mu(0,k,\xi)$ is invertible
for any $k\in \overline{\mathbb{C}^+}\backslash \{0\}$. Moreover,
\[
|\mu^{-1}(0,k,\xi)|^{2}+[\mu^*(0,k,\xi)]^{-1}\left[\frac{\Im
k}{|k|^2} \int\limits_0^\infty |\mu'(s,k,\xi)|^2ds\right] \cdot
\mu^{-1}(0,k,\xi)=
\]
\[
=I+\Re \left[\frac{\mu'(0,k,\xi) \mu^{-1}(0,k,\xi)}{ik}\right]=\Im
\left[ \frac{D'(0,k,\xi)D^{-1}(0,k,\xi)}{k}\right]
\]
since $\mu(r,k,\xi)=D(r,k,\xi)\exp(-ikr)$.
\end{proof}
As a simple corollary, we get that the matrix-valued function

\begin{equation}
G(k)=\frac{D'(0,k,\xi)D^{-1}(0,k,\xi)}{k} \label{fg}
\end{equation}
is Herglotz and its boundary value on the real line is factorized
through $|D^{-1}(0,k,\xi)|^2$. Moreover, we have a uniform bound
on $G(k)$ for large $\Im k$ due to Lemma \ref{estimate1}, which
yields $\|G(k)\|<C$ for large $\Im k$ (where $C$ depends only on
$\xi$ and $\|Q\|_2$). As a corollary from the integral
representation for Herglotz function, we have
\begin{lemma}
For any $k\in \mathbb{C}^+$ and $\|Q\|\in L^2(\mathbb{R}^+)\cap
L^\infty (\mathbb{R}^+)$, we have
\begin{equation}
\|D^{-1}(0,k,\xi)\|\leq C(\xi, \|Q\|)\left[\Im k\right]^{-1/2}
(|\Re k|+1) \label{trivial}
\end{equation}
\end{lemma}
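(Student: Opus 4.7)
The plan is to extract (\ref{trivial}) from the Herglotz property of the matrix-valued function $G(k)$ of (\ref{fg}), together with the uniform bound $\|G(k)\| \leq C(\xi,\|Q\|_2)$ for $\Im k \geq \kappa$ supplied by Lemma \ref{estimate1} (applied to both $D^{-1}$ and $D'$). The identity of the preceding lemma has its middle summand of the form $A^* B A$ with $B \geq 0$, so in the operator order
\[
[D^{-1}(0,k,\xi)]^* D^{-1}(0,k,\xi) \leq \Im G(k),
\]
and hence $\|D^{-1}(0,k,\xi)\|^2 \leq \|\Im G(k)\|$. The task is thereby reduced to bounding $\|\Im G(k)\|$ throughout $\mathbb{C}^+$.

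For this I would appeal to the matrix-valued Herglotz representation
\[
G(k) = \alpha + \beta k + \int \Bigl[\frac{1}{\lambda - k} - \frac{\lambda}{\lambda^2+1}\Bigr] dM(\lambda)
\]
with Hermitian $\alpha$, nonnegative $\beta$, and positive matrix-valued measure $M$ of finite mass against $(\lambda^2+1)^{-1}$. The uniform bound at infinity rules out $\beta$ (otherwise $\|\Im G(iy)\|$ would be of order $\beta y$), and evaluating at $k = i\kappa$ yields $\bigl\|\int (\lambda^2+\kappa^2)^{-1}\, dM(\lambda)\bigr\| \leq C/\kappa$. For $k = u+iv$ with $0 < v \leq \kappa$, the elementary pointwise inequality
\[
\frac{1}{(\lambda-u)^2+v^2} \leq \frac{C(\kappa)(u^2+1)}{v^2(\lambda^2+\kappa^2)},
\]
coming from $\lambda^2+\kappa^2 \leq 2(\lambda-u)^2 + 2u^2 + \kappa^2$ together with $v \leq \kappa$, survives integration against the positive operator-valued measure $M$, and produces
\[
\|\Im G(k)\| = v\, \Bigl\|\int \frac{dM(\lambda)}{(\lambda-u)^2+v^2}\Bigr\| \leq \frac{C'(\xi,\|Q\|_2)(|\Re k|+1)^2}{\Im k}.
\]
For $\Im k \geq \kappa$, the uniform bound on $\|D^{-1}\|$ from Lemma \ref{estimate1} handles that regime directly. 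Taking square roots yields (\ref{trivial}).

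I expect the main technical point to be the legitimacy of the matrix-valued Herglotz representation and the passage of the scalar pointwise inequality through the positive operator-valued integral; the latter follows from the spectral decomposition of $M$. Ruling out the linear piece $\beta k$ from the boundedness of $\|G(k)\|$ for large $\Im k$ is straightforward once the representation is in hand.
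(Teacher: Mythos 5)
Your proof is correct and follows the same route the paper takes: the paper obtains (\ref{trivial}) precisely as a corollary of the matrix-valued Herglotz representation of $G(k)$, combined with the operator inequality $|D^{-1}(0,k,\xi)|^2\leq \Im G(k)$ from the preceding identity and the uniform bound on $G(k)$ for large $\Im k$; you have simply written out the details the paper leaves implicit, and they check out. One caveat on your final clause: for $\Im k\geq\kappa$ the uniform bound $\|D^{-1}(0,k,\xi)\|<C$ does not by itself yield a right-hand side decaying like $[\Im k]^{-1/2}$, but the inequality as literally stated cannot hold in that regime anyway (take $Q=0$, so $D(0,k,\xi)=1$, and let $k=iy$, $y\to\infty$), so the estimate must be read, and is only ever used, for $\Im k$ in a bounded range (the sides of the fixed triangle), where your argument is complete.
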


One may wonder why the function $D(r,k,\xi)$ possesses so many
properties and may be there is some algebraic fact behind it. The
partial answer to that question is contained in the following
Lemma

\begin{lemma}
Let matrix $Y(r,k,\xi)$ solve (\ref{system}) and
\[
E(r,k)= \left[
\begin{array}{cc}
\exp(-2ikr) & 0 \\
0  & 1
\end{array}\right]
\]
If $X$ is defined by
\[
Y=Y_0 U E X
\] then it solves the matrix-valued Krein system
\[
X'(r,\tau,\xi)=\left[
\begin{array}{cc}
i\tau & -A^*(r,\xi) \\
-A(r,\xi)  & 0
\end{array}\right]X(r,\tau,\xi)
\]
with $\tau=2k$ and $A(r,\xi)$ defined by (\ref{coeff}).
\label{lemma-alpha}
\end{lemma}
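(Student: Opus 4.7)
The plan is to rely on the fact, already established in the paper, that $S := U^{-1} Y_0^{-1} Y$ satisfies
\[
S' = \begin{pmatrix} 0 & -A^*(r,\xi)\exp(-2ikr) \\ -A(r,\xi)\exp(2ikr) & 0 \end{pmatrix} S,
\]
and then simply to conjugate out the remaining oscillatory factor by the diagonal matrix $E$. Since by definition $Y=Y_0 U E X$, we have $X=E^{-1}S$, and the verification reduces to differentiating this product and reading off the result.

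Concretely, I would first note that $E^{-1}=\mathrm{diag}(\exp(2ikr),1)$, so $(E^{-1})'=\mathrm{diag}(2ik\exp(2ikr),0)$. Writing $S=(S_1,S_2)^t$, I would then compute, componentwise,
\[
X' \;=\; (E^{-1})'S + E^{-1}S' \;=\; \begin{pmatrix} 2ik\exp(2ikr)S_1 \\ 0 \end{pmatrix} + \begin{pmatrix} -A^*(r,\xi)\,S_2 \\ -\exp(2ikr)A(r,\xi)\,S_1 \end{pmatrix}.
\]
Since $X_1=\exp(2ikr)S_1$ and $X_2=S_2$, the factors $\exp(\pm 2ikr)$ recombine into the entries of $X$ itself, leaving
\[
X' = \begin{pmatrix} 2ik\,X_1 - A^*(r,\xi)\,X_2 \\ -A(r,\xi)\,X_1 \end{pmatrix} = \begin{pmatrix} 2ik & -A^*(r,\xi) \\ -A(r,\xi) & 0 \end{pmatrix} X.
\]
With $\tau=2k$ this is exactly the matrix-valued Krein system in the statement.

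There is no real obstacle here: the lemma is essentially a change-of-variables bookkeeping statement. The only mildly delicate point would be verifying once again the system satisfied by $S$, which in turn uses that $U_1,U_2$ are unitary (because $\mp \frac{i\xi}{2}Q$ is skew-adjoint when $Q=Q^*$) so that $A^*=\frac{i\xi}{2}U_1^{-1}QU_2$ — this is what makes the off-diagonal entries of the $S$-system come out as honest adjoint pairs and makes the diagonal entries coming from $-U^{-1}U'$ cancel against the diagonal part of $U^{-1}Y_0^{-1}\left(\begin{smallmatrix} 0 & 0 \\ k\xi Q & 0\end{smallmatrix}\right)Y_0 U$. Since this cancellation has been used by the paper already in deriving the $S$-equation, nothing new needs to be checked, and the argument above completes the proof.
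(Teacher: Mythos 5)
Your proof is correct and is exactly the ``elementary calculation'' the paper alludes to: since $Y=Y_0UEX$ gives $X=E^{-1}S$ with $S=U^{-1}Y_0^{-1}Y$, differentiating the product and absorbing the factors $\exp(\pm 2ikr)$ into $X_1=\exp(2ikr)S_1$, $X_2=S_2$ yields the Krein system with $i\tau=2ik$. Your side remark about unitarity of $U_{1(2)}$ (so that $A^*=\tfrac{i\xi}{2}U_1^{-1}QU_2$) is also the right justification for the form of the $S$-equation, which the paper has already established.
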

\begin{proof}
The proof is an elementary calculation.
\end{proof}

The matrix-valued Krein systems were studied (see, e.g.,
\cite{Sakh}). For the scalar case, see \cite{Krein, Den1}. One can
express $D(r,k,\xi)$ through the certain special solutions of the
Krein systems that are know to have properties similar to those
established in previous Lemmas.

Consider (\ref{factor2}). As was mentioned before, $\hat{F}(k,kt)$
has analytic continuation in $k$ to $\mathbb{C}$. The following
Lemma is elementary
\begin{lemma}
Fix any $k\in \mathbb{C}^+$ and $T_1>0$. Then, there is
$\delta(k,T_1,\|Q\|_2)>0$ small enough so that there is a function
$F(r)=(f(r), 0, 0, \ldots)$, supported on $[0,\delta]$, for which
\begin{equation}
\|\hat{F}(k,kt)\|>C>0, \forall t\in [-T_1,T_1] \label{estimate2}
\end{equation}
where the constant $C$ depends on $k$, $T_1$, and $\|Q\|_2$.
\label{lemma11}
\end{lemma}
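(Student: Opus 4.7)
The plan is to use the explicit small--$\rho$ behaviour of the Cauchy solution $\alpha$: since $\alpha(0,k,kt)=0$ and $\alpha'(0,k,kt)=I$, the matrix $\alpha(\rho,k,kt)$ is close to $\rho I$ on a short interval $[0,\delta]$, uniformly in $t\in[-T_1,T_1]$. Consequently, for $F=(f,0,0,\ldots)$ supported in $[0,\delta]$, the first coordinate
\[
\hat F(k,kt)_1=\int_0^\delta \overline{\alpha_{11}(\rho,k,kt)}\,f(\rho)\,d\rho
\]
is well approximated by $\int_0^\delta \rho\,f(\rho)\,d\rho$, which can be made nonzero uniformly in $t$ by a suitable choice of $f$.

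To make the approximation quantitative, I would rewrite the Cauchy problem $-\alpha''+ktQ\alpha=k^2\alpha$ as the integral equation
\[
\alpha(\rho,k,kt)=\frac{\sin k\rho}{k}\,I+\int_0^\rho \frac{\sin k(\rho-s)}{k}\cdot ktQ(s)\alpha(s,k,kt)\,ds.
\]
For $\rho\in[0,\delta]$ with $\delta\leq 1$ and $k$ fixed, the kernel $|\sin k(\rho-s)/k|$ is bounded by $C_k(\rho-s)$, and Cauchy--Schwarz gives $\int_0^\delta \|Q(s)\|\,ds\leq \sqrt\delta\,\|Q\|_2$. A standard Gronwall argument then yields $\|\alpha(\rho,k,kt)\|\leq 2C_k\rho$ for $\delta$ small, uniformly in $t\in[-T_1,T_1]$. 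Plugging this back into the integral equation, applying Cauchy--Schwarz to $\int_0^\rho \|Q(s)\|s\,ds\leq \|Q\|_2\rho^{3/2}/\sqrt 3$, and combining with the Taylor bound $|\sin k\rho/k - \rho|\leq C|k|^2\rho^3$, one gets
\[
|\alpha_{11}(\rho,k,kt)-\rho|\leq \varepsilon(\delta)\,\rho,\qquad \rho\in[0,\delta],\ t\in[-T_1,T_1],
\]
with $\varepsilon(\delta)\to 0$ as $\delta\to 0$, depending only on $k$, $T_1$, and $\|Q\|_2$.

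Finally I would take $f(\rho)=\delta^{-1/2}\chi_{[0,\delta]}(\rho)$, which has $\|f\|_{L^2}=1$ and is supported in $[0,\delta]$. Then the leading term equals $\delta^{-1/2}\int_0^\delta \rho\,d\rho=\delta^{3/2}/2$, with error at most $\varepsilon(\delta)\delta^{3/2}/2$. Fixing $\delta=\delta(k,T_1,\|Q\|_2)$ small enough that $\varepsilon(\delta)<1/2$ yields $\|\hat F(k,kt)\|\geq |\hat F(k,kt)_1|\geq \delta^{3/2}/4=:C>0$ uniformly in $t\in[-T_1,T_1]$, proving the lemma. I do not foresee any serious obstacle — the argument is an elementary Gronwall estimate — and the only mild point to watch is that all constants must be controlled by the $L^2$ norm of $Q$ (rather than its $L^\infty$ norm), so that the same $\delta$ and $C$ serve uniformly for the truncations $Q=V_{n,R}$ used in the main theorem.
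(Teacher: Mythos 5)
Your argument is correct and is essentially the paper's own proof: both start from the Volterra integral equation for $\alpha$, deduce $\alpha(\rho,k,kt)=\rho(1+\bar o(1))$ as $\delta\to 0$ uniformly in $|t|\leq T_1$ (with all constants controlled by $\|Q\|_2$ via Cauchy--Schwarz), and then pick a nonnegative $f$ supported on $[0,\delta]$. You have merely made the $\bar o(1)$ quantitative through the Gronwall step and fixed a concrete normalized $f$, which the paper leaves implicit.
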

\begin{proof}
 For $\alpha^*(r,k,kt)$ from (\ref{alpha0}), we have
\begin{eqnarray}
\alpha^*(r,k,kt)=\frac{\sin(rk)}{k}+t\int\limits_0^r
{\sin[k(r-\rho)]}
 \alpha^*(\rho,k,kt) Q(\rho)d\rho
\end{eqnarray}
That integral equation can be used to define analytic continuation
in $k$. Assume $k$ is fixed and $\delta\to 0$. Then,
\begin{equation}
\alpha^*(r,k,kt)=r(1+\bar{o}(1)), \quad 0<r<\delta \label{alpha}
\end{equation}
uniformly in $|t|<T_1$. Therefore, to satisfy (\ref{estimate2}),
it is sufficient to choose small $\delta$ and any nonnegative
function $f(r)$ supported on $[0,\delta]$.

\end{proof}

{\it Proof of Theorem \ref{theorem1}.} Fix any $T>0$ and
$[a,b]\subset \mathbb{R}^+$. Let us show that $[a^2,b^2]\subset
\sigma_{ac}(L(t))$ for generic $t\in [-T,T]$. For any $n,R$,
consider $L_{n,R}^{1}$ given by (\ref{lnr}).  Now, the potential
$tV_{n,R}(r)$ in $L_{n,R}^{1}$ is $n\times n$ matrix-function with
compact support. Also, $\|V_{n,R}\|_2\leq \|V\|_2$ for all $n,R$.
Therefore, Lemmas \ref{lemma3}--\ref{lemma11} are applicable.
Consider isosceles triangle in $\mathbb{C}^+$ with base
$I=[a_1,b_1]\supset [a,b]$, sides $I_{1(2)}$, and the adjacent
angles both equal to $\pi/\gamma$. Fix some $k_0\in \mathbb{C}^+$
inside this triangle. Take the function $F(r)$ given by
Lemma~\ref{lemma11} applied to $k_0$ and some large $T_1(T)$ to be
specified later. Let $d\sigma(\lambda,t)$ be the spectral measure
of $F(r)$ corresponding to $L(t)$. We will show that for generic
$t\in [-T,T]$ we have $\sigma'(\lambda,t)>0$ for a.e. $\lambda\in
[a^2,b^2]$.  Let $d\sigma_{n,R}(\lambda)$ be the spectral measure
of $F(r)$ with respect to $L_{n,R}^{1}$. By (\ref{factor2}),
\begin{equation}
\sigma'_{n,R}(k^2,kt)=k\pi^{-1}\|D^{-1}_{n,R}(0,k,t)\hat{F}_{n,R}(k,kt)\|^2
\label{factor3}
\end{equation}

For each $n,R$ and real $k$, we have factorization
\[
\Im G_{n,R}(k)=|D_{n,R}^{-1}(0,k,\xi)|^2
\]
where $G_{n,R}(k)$ is Herglotz matrix-valued function given by
(\ref{fg}) with uniform in $n,R$ estimates for large $\Im k$.
Consequently,
\[
\int \frac{\Im G_{n,R}(k)}{k^2+1}dk<C
\]
for all $n,R$. Since $\hat{F}_{n,R}(k,kt)$ is entire in $k$ (with
uniform estimates on Taylor coefficients), we have
\begin{equation}
\int_J \sigma'_{n,R}(k^2,kt)dk <C(J,T_1) \label{variation}
\end{equation}
uniformly in $n,R, |t|<T_1$ for any interval $J\subset
\mathbb{R}$.

 Consider function
\[
g_{n,R}(k)=\ln \|D^{-1}_{n,R}(0,k,t)\hat{F}_{n,R}(k,kt)\|
\]
Since $D^{-1}_{n,R}(0,k,t)\hat{F}_{n,R}(k,kt)$ is analytic in
$\mathbb{C}^+$ and continuous down to the real line, $g_{n,R}(k)$
is subharmonic. The mean value inequality applied to $g_{n,R}(k)$
at point $k_0$ yields
\[
\int_{I} g_{n,R}(k)\omega(k,k_0)dk+\int_{I_{1(2)}}
g_{n,R}(k)\omega(k,k_0)dk\geq g_{n,R}(k_0)
\]
where $\omega(k,k_0)$ is the Green function for our
triangle\footnote{Analogous trick was used in \cite{killip}.}. It
is well-known that $\omega(k,k_0)$ is smooth, positive inside $I,
I_{1(2)}$, and vanishes at the vertices of triangle  such that
$\omega(k,k_0)\leq C|k-a_1(b_1)|^{\gamma-1}$. At $k_0$, we have
\[
g_{n,R}(k_0)\geq \ln \left[
\|D_{n,R}(0,k_0,t)\|^{-1}\|\hat{F}_{n,R}(k_0,t)\|\right]>C
\]
uniformly in $n$ and $R$ due to (\ref{estimatel2}) and
(\ref{estimate2}). By (\ref{trivial}) and trivial estimate on
$\hat{F}(k)$ from above, we have
\[
\int_{I_{1(2)}} g_{n,R}(k)\omega(k,k_0)dk<C
\]
uniformly in $n$ and $R$. In the last inequality, we also used the
properties of the weight $\omega$.

Consequently, we have
\begin{equation}
\int_{I} g_{n,R}(k)\omega(k,k_0)dk>C>-\infty
\end{equation}
uniformly in $n,R$. By (\ref{factor3}) and (\ref{variation}),
\[
\int\limits_a^b \ln^- \sigma'_{n,R}(k^2,kt)dk>C,\quad
\int\limits_{a^2}^{b^2} \ln^- \sigma_{n,R}'(\lambda,
t\sqrt\lambda)d\lambda>C
\]
where the last inequality is satisfied uniformly in $n, R, t\in
[T_1,T_1]$. Integration in $t$ yields
\begin{equation}
\int\limits_{a^2}^{b^2}\int\limits_{-T_1 a}^{T_1 a} \ln^-
\sigma'_{n,R}(\lambda,t)d\lambda dt>C \label{uniform}
\end{equation}
uniformly in $n,R$ by simple change of variables. Take
$T_1=a^{-1}T$.

Now, we consider the two-dimensional measures $d\sigma(\lambda,t)$
and $d\sigma_{n,R}(\lambda,t)$, both restricted to
$[a^2,b^2]\times [-T,T]$. It is easy to show that
$d\sigma_{n,R}(\lambda,t)\to d\sigma(\lambda,t)$ in the weak-star
sense. Therefore, the weak upper semicontinuity of the entropy
(see \cite{ks}, p. 293) and estimate (\ref{uniform}) imply
\[
\int\limits_{a^2}^{b^2}\int\limits_{-T}^{T} \ln^- \left(
\frac{d\sigma}{d\mu}\right) d\lambda dt>-\infty
\]
On the other hand, Lemma \ref{measure} implies that
$d\sigma/d\mu=\sigma'(\lambda,t)$. Consequently,
\[
\int\limits_{a^2}^{b^2}\int\limits_{-T}^{T} \ln^-
\sigma'(\lambda,t) d\lambda dt>-\infty
\]
Therefore, by Fubini theorem,
\[
\int\limits_{a^2}^{b^2} \ln^- \sigma'(\lambda,t) d\lambda
dt>-\infty
\]
for a.e. $t\in [-T,T]$. That, of course, implies $[a^2,b^2]\subset
\sigma_{ac}(L(t))$ for generic $t\in [-T,T]$.
 $\Box$

{\bf Remark.} We proved that the function
\[
p(t)=\int\limits_{I} \ln \sigma'(\lambda,t) d\lambda
\]
belongs to $L^1_{\rm loc}(\mathbb{R})$ for any
$I\subset\mathbb{R}^+$. Another simple property of $p(t)$ is upper
semicontinuity. It follows from the weak continuity of
$d\sigma(\lambda,t)$ with respect to $t$ and weak upper
semicontinuity of the entropy. Therefore, the set of ``good" $t$
for which $p(t)$ is finite is necessarily $F_\sigma$. We believe
that the statement of the Theorem \ref{theorem1} holds for all
$t$. One can try to prove that by establishing the asymptotics of
the Green functions as $r\to\infty$. Let $Y(r,k)$ be solution to
\[
-Y''+QY=k^2Y
\]
for $k\in \mathbb{C}^+$ that decays at infinity. If
$Y=\exp(ikr)\mu$, we have
\[
\mu''+2ik\mu'=Q\mu
\]
We try to find the solution in the form
\[
Z=\mu'\mu^{-1}
\]
Then
\[
Z'+2ikZ=Q-Z^2
\]
and
\[
Z(r)=Z_0(r)+\int\limits_r^\infty \exp(2ik(s-r)) Z^2(s)ds,\,
Z_0(r)=-\int\limits_r^\infty Q(s)\exp(2ik(s-r))ds
\]
For $\Im k$ large enough, this integral equation can be solved by
contraction argument and that gives us $Z=Z_0+Z_1$, where
$\|Z_0\|\in L^2(\mathbb{R}^+)$ and $\|Z_1\|\in L^1(\mathbb{R}^+)$.
For $\mu$,  we have
\[
\mu'=(Z_0+Z_1)\mu
\]
Unfortunately, the asymptotical analysis of this equation does not
seem to be possible even in matrix-valued case although $Z_0$ is
precise and $\|Z_1\|\in L^1(\mathbb{R}^+)$. That explains why we
have to switch to different problem with energy dependent coupling
constant. For this new problem, the semigroup generated by $Z_0$
happens to be bounded and the usual asymptotical analysis works.

 We could have studied equation
 \[
 -y''+k\xi Qy=k^2y
 \] in the framework of the spectral theory for quadratic hyperbolic
 pencils.

Consider the following quadratic pencil \cite{markus}
\[
P(k)=A_1+kA_2-k^2, k\in \mathbb{C}
\]
where $A_1=-{d^2}/{dr^2}\cdot I_{n\times n}$ with Dirichlet
boundary condition at zero, and $A_2=\xi Q(r)$. Notice that $P(k)$
is hyperbolic (\cite{markus}, p. 169) since the quadratic
polynomial
\[
( P(k)G,G) =\int\limits_0^\infty
\|G'(r)\|^2dr+k\xi\int\limits_0^\infty \langle
Q(r)G(r),G(r)\rangle dr-k^2
\]
has two distinct real roots for any $G(r)\in
\cal{D}(P(k))=\bigoplus\limits_{k=1}^n \dot{H}(\mathbb{R}^+),
\|G\|=1$

The general spectral theory of these pencils ensures invertibility
of $P(k)$ for any $k\in \mathbb{C}, k\notin \mathbb{R}$. That is
another explanation to the fact that function $D(r,k,\xi)$ is
well-defined and invertible for all $k\in \mathbb{C}^+$. Notice
that for Schr\"odinger operators, the Jost function $J(0,k,t)$ can
be degenerate at some points $k_j=i\kappa_j$ that correspond to
negative eigenvalues $-\kappa_j^2$. By Lemma \ref{lemma-alpha},
the study of $P(k)$ is essentially equivalent to analysis of the
corresponding Krein systems and vice versa. Since Krein systems
are understood rather well, we do not pursue any further analysis
of $P(k)$. We just want to mention that matrix-valued Krein
systems are essentially equivalent to matrix-valued Dirac
operators. The $L^2$--conjecture for Dirac operators was resolved
before \cite{den3} and the result obtained was much stronger than
that of Theorem \ref{theorem1}.

Pencils similar to $P(k)$ were studied before, especially for the
purpose of solving the inverse problems (see, e.g. \cite{nabiev},
and references there).

\section{Multidimensional Schr\"odinger operator and corresponding
pencils}

In this section, we consider operator $H$ given by
(\ref{hamiltonian}). For simplicity, we will work in the
three-dimensional case. The $L^2$--conjecture for this case
\cite{simon} reads
\begin{equation}
\int_{\mathbb{R}^3}\frac{V^2(x)}{|x|^2+1}dx<\infty \label{l2}
\end{equation}
and one expects $\mathbb{R}^+\subseteq \sigma_{ac}(H)$. This
problem attracted a lot of attention recently and was resolved
only for some special cases \cite{dd1, dd2, dd3, dd4, dd5, lns1,
lns2, per}. Basically, the main technical difficulty is absence of
thorough asymptotical analysis for the Green function at complex
energies. The operator-valued one-dimensional Schr\"odinger
operator is a toy model for $H$ since it can be written as
\begin{equation}
-\frac{d^2}{dr^2}-\frac{B}{r^2}+V(r,\theta) \label{spher}
\end{equation}
in spherical coordinates with $B$ being  Laplace-Beltrami operator
on the unit sphere $\Sigma$, $\theta\in \Sigma$. For general
operator-valued case, the asymptotics at complex energies is not
obtained (see discussion in the previous section). Of course,
equation (\ref{spher}) is more complicated since $B$ is unbounded.

  In this
paper, we make another step toward understanding of the problem.
Consider $H(t)$ with potential $V$ and  the coupling constant $t$.
Our technique will allow us to easily prove the following results
\begin{theorem}
Assume
\[
V(x)=div\, L(x)
\]
where smooth vector field $L(x)$ satisfies
\[
L(x), |\nabla L(x)|\in L^\infty (\mathbb{R}^3), \quad
\int_{\mathbb{R}^3} \frac{|L(x)|^2}{|x|^2+1}dx <\infty
\]
Then for generic $t$, $\mathbb{R}^+\subseteq
\sigma_{ac}(H(t))$.\label{theorem2}
\end{theorem}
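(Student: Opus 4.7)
I would mirror the strategy used for Theorem~\ref{theorem1}, transplanting the pencil trick and the subharmonic mean value argument to $\mathbb{R}^3$. Fix an interval $[a,b]\subset\mathbb{R}^+$ and $T>0$, choose a test function $F$ compactly supported in a small ball around the origin, and let $d\sigma(\lambda,t)$ be the spectral measure of $F$ with respect to $H(t)$. The two-dimensional measure bookkeeping of Lemma~\ref{measure} and the weak-$\star$ upper semicontinuity of the entropy used at the end of Section~2 are insensitive to the dimension, so the whole problem reduces to proving the uniform entropy bound
\[
\int_{a^2}^{b^2}\!\!\int_{-T}^{T}\ln^- \sigma_R'(\lambda,t)\,d\lambda\,dt>-C
\]
for compactly supported approximations obtained by truncating $L$.

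\textbf{The pencil and Combes--Thomas.} The substitution $\lambda=k^2$, coupling $=kt$, converts $H(kt)-k^2$ into the quadratic Schr\"odinger pencil $P(k,t)=-\Delta-k^2+ktV$ for $k\in\overline{\mathbb{C}^+}$. The appendix's Combes--Thomas estimate for such pencils applies here precisely because $V=\mathrm{div}\,L$: an integration by parts replaces the bad term $\int V|u|^2$ by $-2\Re\int L\cdot\bar u\,\nabla u$, which by Cauchy--Schwarz is absorbed by $\int|\nabla u|^2$ together with $\int|L|^2|u|^2$, and the latter is controlled using the weighted $L^2$-hypothesis on $L$ combined with the dissipation gain from $\Im k>0$ carried by $P(k,t)$. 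One concludes that $P(k,t)$ is invertible on all of $\mathbb{C}^+$, with norm bounds uniform in $t\in[-T_1,T_1]$ and in the truncation parameter $R$.

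\textbf{Factorization and subharmonic trick.} Smoothly truncate $L$ to $L_R$ supported in $\{|x|<R\}$ and set $V_R=\mathrm{div}\,L_R$, so the hypotheses on $L$ pass to $L_R$ uniformly in $R$. For $V_R$ compactly supported, the outgoing resolvent $(H_R(kt)-k^2-i0)^{-1}F$ has the standard far-field expansion with scattering amplitude $A_R(\hat{x},k,t)\in L^2(S^2)$, and the three-dimensional analog of Lemma~\ref{lemmanew} provides the factorization
\[
\sigma_R'(k^2,kt)=c_3\,k\,\|A_R(\cdot,k,t)\|_{L^2(S^2)}^2.
\]
The pencil bounds of the previous paragraph make $A_R(\cdot,k,t)$, viewed as an $L^2(S^2)$-valued function of $k$, analytic on $\mathbb{C}^+$ with uniform polynomial bounds, so $g_R(k)=\ln\|A_R(\cdot,k,t)\|_{L^2(S^2)}$ is subharmonic there. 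Applying the mean value inequality on an isosceles triangle with base $I\supset[a,b]$ and inner vertex $k_0$ exactly as in the proof of Theorem~\ref{theorem1}, one obtains a lower bound at $k_0$ (after choosing $F$ by a 3D variant of Lemma~\ref{lemma11} so that $A_R(\cdot,k_0,t)$ does not vanish, uniformly in $R$ and $t\in[-T_1,T_1]$) and polynomial-in-$|k|$ upper bounds along the two complex sides. This gives
\[
\int_I g_R(k)\,\omega(k,k_0)\,dk\ge -C
\]
uniformly in $R,t$; combined with a uniform Herglotz-type total variation bound on $\int\sigma_R'(k^2,kt)\,dk$, this yields the desired entropy estimate (\ref{uniform}) with $V_R$ in place of $V_{n,R}$. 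Weak-$\star$ convergence of $d\sigma_R(\lambda,t)$ to $d\sigma(\lambda,t)$, upper semicontinuity of the entropy (\cite{ks}), Fubini, and arbitrariness of $[a,b]$ and $T$ finish the proof.

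\textbf{Main obstacle.} The delicate input is the Combes--Thomas bound for $P(k,t)$ on $\mathbb{C}^+$. Without pointwise decay of $V$ the ordinary Combes--Thomas argument breaks down, and the divergence structure $V=\mathrm{div}\,L$ together with the weighted $L^2$-hypothesis on $L$ is precisely what makes the integration-by-parts absorb the perturbation using only the dissipation from $\Im k>0$. A secondary technical point is the three-dimensional analog of Lemma~\ref{lemmanew}---the factorization $\sigma'\sim\|A\|^2$ with uniform control of $A$ in the truncation parameter $R$---which is standard stationary scattering theory but must be carried out so that the bounds involve only $\|L\|_\infty$, $\|\nabla L\|_\infty$ and the weighted $L^2$-norm of $L$, not the pointwise size of $V_R$.
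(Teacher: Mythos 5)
Your overall skeleton (factorization $\sigma'\sim k\|A\|^2$, subharmonicity of $\ln\|A\|$ on a triangle, Herglotz variation bound, entropy semicontinuity, weak-$\star$ limit in $R$) is the paper's skeleton, but you have misplaced where the hypothesis $V=\mathrm{div}\,L$ actually does its work, and as a result the hardest step of the proof is missing.

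First, the upper bound on the amplitude does not come from a Combes--Thomas estimate. The paper proves the universal identity (\ref{balance}) for the pencil, which yields (\ref{always}): $\|J(k,\theta,\xi)\|_{L^2(\Sigma)}\le[\sqrt{|k|}\,\Im k]^{-1}\bigl[\|f\|_2\|fe^{2\Im k|x|}\|_2\bigr]^{1/2}$ for \emph{any} bounded $V$, with no decay and no divergence structure. Invertibility of $P(k)$ on $\mathbb{C}^+$ with the bound (\ref{spec}) is likewise a soft consequence of hyperbolicity (Lemma \ref{trifle}). The paper states explicitly that the Combes--Thomas inequality of the appendix is not needed for the a.c.\ spectrum; your ``main obstacle'' paragraph identifies an obstacle that is not there.

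Second, the step you dispose of as ``choosing $F$ by a 3D variant of Lemma~\ref{lemma11} so that $A_R(\cdot,k_0,t)$ does not vanish, uniformly in $R$ and $t$'' is precisely where $V=\mathrm{div}\,L$ and $\int|L|^2(|x|^2+1)^{-1}dx<\infty$ must enter, and no analog of Lemma~\ref{lemma11} is available: in one dimension the factorization $A=J^{-1}(0,k,t)\hat F$ separates the local data near $r=0$ from the scattering data, and there is no such factorization in $\mathbb{R}^3$. The paper's actual argument is: (i) use the Birman--Kuroda trace-class argument to replace $L$ by $L_m=L\cdot b_m$ vanishing on $|x|<m-1$; (ii) test the equation for $\mu$ against $|x|^{-2}$ over $|x|>\rho$ to obtain the identity (\ref{form1}) for $\int_\Sigma J_{m,R}\,d\theta$; (iii) integrate by parts in the term $\int V_{m,R}\mu_{m,R}|x|^{-2}dx$ using $V=\mathrm{div}\,L$, and bound the resulting gradient terms by $C(k)\int|L_{m,R}|^2(|x|^2+1)^{-1}dx$ via Cauchy--Schwarz and the weighted estimates (\ref{ineq1})--(\ref{ineq2}), so that this contribution is small uniformly in $R$ once $m$ is large; (iv) use Lemma \ref{trunc} to show the boundary terms converge to the free quantities built from $\mu^0$, and pick $k_0$ away from the zeros of the entire function $A^0(k)$. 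This produces $\|J_{m,R}(\theta,k_0,\xi)\|\ge|\langle J_{m,R},1\rangle|>\delta$ uniformly in $R$. Without some version of (i)--(iv) your proposal has no uniform lower bound at $k_0$, and the mean value inequality on the triangle gives nothing.
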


\begin{theorem}
Assume $V(x)$ is bounded and
\[
\int\limits_1^\infty r|v(r)|^2<\infty
\]
for $v(r)=\sup_{|x|=r} |V(x)|$. Then for generic $t$,
$\sigma_{ac}(H(t))=\mathbb{R}^+$.\label{theorem3}
\end{theorem}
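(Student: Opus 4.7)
The plan is to mirror the hyperbolic pencil argument of Theorem~\ref{theorem1} in the three-dimensional setting. Fix $T>0$ and $0<a<b$; choose $f\in C_c^\infty(\mathbb{R}^3)$ with $\|f\|=1$ supported in a small ball about the origin, and let $d\sigma(\lambda,t)$ be the spectral measure of $f$ for $H(t)$. The weak continuity of $d\sigma(\lambda,t)$ in $t$ and the three-dimensional analog of Lemma~\ref{measure}, exactly as in Section~2, reduce the theorem to the entropy bound
$$\int_{a^2}^{b^2}\int_{-T}^{T}\ln\sigma'(\lambda,t)\,d\lambda\,dt>-\infty.$$
Truncate $V_R(x)=V(x)\chi_{|x|\le R}$, and let $d\sigma_R$ be the corresponding spectral measures; weak-$*$ convergence $d\sigma_R\to d\sigma$ together with the weak upper semicontinuity of entropy (\cite{ks}, Corollary~5.3) further reduce the problem to the $R$-uniform lower bound on $\int\!\int\ln\sigma'_R\,d\lambda\,dt$. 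For compactly supported $V_R$, the limiting absorption principle furnishes the scattering representation
$$\sigma'_R(k^2,t)=\frac{k}{4\pi^2}\int_{S^2}|A_R(k,t,\theta)|^2\,d\theta,\quad k>0,$$
where $A_R(k,t,\theta)$ is the far-field pattern of the outgoing solution of $(-\Delta+tV_R-k^2)u=f$.

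Now comes the pencil substitution. Consider
$$P_R(k,\xi)=-\Delta+k\xi V_R-k^2,\quad k\in\mathbb{C}^+,\ \xi\in\mathbb{R},$$
so that $P_R(k,t/k)=H_R(t)-k^2$ and $\sigma'_R(k^2,k\xi)=(k/4\pi^2)\|\tilde A_R(k,\xi,\cdot)\|_{L^2(S^2)}^2$, with $\tilde A_R$ the far-field of the outgoing solution to $P_R(k,\xi)u=f$. For compactly supported $V_R$ this $\tilde A_R$ extends analytically in $k$ to all of $\mathbb{C}^+$. The Combes--Thomas estimate for Schr\"odinger pencils proved in the appendix supplies the key uniform bound
$$\|\tilde A_R(k,\xi,\cdot)\|_{L^2(S^2)}\le\exp\!\bigl[C\xi^2(\Im k)^{-1}\|V\|_{L^2_w}^2\bigr]\|f\|,$$
for $k$ in a fixed compact subset of $\mathbb{C}^+$, where the weighted norm $\|V\|_{L^2_w}^2$ is dominated by $\int_1^\infty rv^2(r)\,dr+\|V\|_\infty^2|B_1|$, finite by hypothesis and independent of $R$.

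The rest is a direct replay of the subharmonic argument of Theorem~\ref{theorem1}. The function $g_R(k)=\ln\|\tilde A_R(k,\xi,\cdot)\|_{L^2(S^2)}$ is subharmonic in $\mathbb{C}^+$. Fixing an isosceles triangle in $\mathbb{C}^+$ with base $I=[a_1,b_1]\supset[a,b]$, sides $I_1,I_2$, apex angles $\pi/\gamma$ and an interior point $k_0$, the mean value inequality yields
$$\int_I g_R(k)\omega(k,k_0)\,dk\ge g_R(k_0)-\int_{I_1\cup I_2}g_R(k)\omega(k,k_0)\,dk.$$
A lower bound $g_R(k_0)>C$ uniform in $R$ and $\xi\in[-T_1,T_1]$ comes from a trivial analyticity/small-$\delta$ estimate on the source (the multidimensional analog of Lemma~\ref{lemma11}), while the side integrals are dominated by the Combes--Thomas exponential, which is integrable against $\omega\lesssim|k-a_1|^{\gamma-1}$ for $\gamma$ chosen large enough. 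Reverting $t=k\xi$, integrating $\xi\in[-T/a,T/a]$, and letting $R\to\infty$ produce the required entropy bound and hence the theorem.

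The main obstacle is the Combes--Thomas step: one must verify that the appendix estimate genuinely accepts the weighted norm $\int rv^2(r)\,dr$ as input (rather than the stronger $\int|V|^2\,dx$, which the hypothesis does not supply) and yields $R$-independent constants. A secondary concern is producing the multidimensional factorization $\sigma'_R(k^2,k\xi)=ck\|\tilde A_R\|^2$ through a single analytic quantity; this should follow from the standard scattering formalism once the pencil resolvent is controlled uniformly in $R$, after which the remaining machinery from Section~2 transfers without essential change.
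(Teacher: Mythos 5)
Your overall skeleton (factorization of $\sigma'$ through the far--field amplitude of the pencil, subharmonicity of $\ln\|J\|$ on a triangle, uniform entropy bounds, weak--$*$ convergence plus upper semicontinuity of the entropy) is the right transplant of the Theorem~\ref{theorem1} machinery, and your reduction steps are fine. But the two analytic inputs are misplaced, and the second misplacement is a genuine gap. First, the uniform upper bound on $\|J_R(k,\theta,\xi)\|_{L^2(\Sigma)}$ does not come from the Combes--Thomas estimate of the appendix (which the paper explicitly does not use for the a.c.\ spectrum, and whose constants depend on $\|V\|_\infty$, not on any weighted $L^2$ norm). It is the universal energy-identity bound (\ref{always}) of Lemma~\ref{lemma-always}, $\|J\|\le[\sqrt{|k|}\,\Im k]^{-1}\bigl[\|f\|_2\,\|fe^{2\Im k|x|}\|_2\bigr]^{1/2}$, which holds for \emph{every} bounded $V$ with no decay hypothesis whatsoever; it follows from multiplying the equation for $\mu=\psi e^{-ik|x|}|x|$ by $\bar\mu|x|^{-2}$ and the resolvent bound (\ref{spec}) for hyperbolic pencils. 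So your worry about whether Combes--Thomas "accepts" the weighted norm is moot, but so is the bound you wrote down.

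The real issue is your treatment of the lower bound $g_R(k_0)>C$, which you dismiss as "a trivial analyticity/small-$\delta$ estimate on the source (the multidimensional analog of Lemma~\ref{lemma11})". No such analog exists: in one dimension the amplitude factors as $D^{-1}(0,k,\xi)\hat F(k,\xi)$, with $D^{-1}$ quantitatively invertible by Lemma 2.8, so a lower bound on $\hat F$ suffices. In three dimensions there is no such factorization, and $\|J_R(k_0)\|$ can a priori be uncontrollably small because in the identity (\ref{balance}) the gradient term may absorb all of the right-hand side. This lower bound is precisely where the hypothesis $\int_1^\infty r\,v^2(r)\,dr<\infty$ is consumed, and it is the core of the paper's proof: one truncates $V$ inside radius $m$ so that $\mu_{m,R}$ is close to the spherically symmetric free solution $\mu^0$ on the sphere $|x|=\rho$ (with $A^0(k_0)\ne0$), writes $\|J_{m,R}\|^2$ via (\ref{universal}) as a boundary term at $|x|=\rho$ minus a weighted integral of $|\nabla\mu_{m,R}|^2$ plus a flux term, and then must show the last two are small uniformly in $R$. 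That requires the weighted bound (\ref{gradient}) on the \emph{full} gradient --- including the tangential component $\nabla_\tau\mu$, which the generic $L^2$-type estimates of Lemma~\ref{lemma-l2} do not control --- and it is obtained by pairing the equation with $r\mu'$ and exploiting $\int r v^2(r)\,dr<\infty$ together with $\nabla_\tau\mu^0=0$. Without this step your argument proves nothing that the bare boundedness of $V$ would not already give, so the hypothesis of the theorem would never enter.
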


 Denote by
$H_{R}(t)$ the Schr\"odinger operator with potential
$tV_R(x)=tV(x)\cdot \chi_{|x|<R}(x)$. For fixed $f(x)\in
L^2(\mathbb{R}^3)$ with compact support inside the unit ball,
introduce the spectral measures $d\sigma(\lambda,t)$ and
$d\sigma_{R}(\lambda,t)$. For three-dimensional case, we have
direct analogs of Lemmas proved in the last section. In particular
(\cite{dd1}, p. 3974)
\begin{lemma}
Assume that $V(x)$ is real-valued compactly supported potential
and $u(x,k,t)=(-\Delta+tV-k^2-(+0)i)^{-1}f$ is the restriction of
the solution to real $k$. Then, for $u(x,k)$, the following
asymptotics holds true
\[
u(x,k,t)=\frac{\exp(ik|x|)}{|x|}\left[A(k,\theta,t)+\bar{o}(1)\right],
\quad \theta=x/|x|
\]
as $|x|\to\infty$. Moreover,
\begin{equation}
\sigma'(k^2,t)=k\pi^{-1} \|A(k,\theta,t)\|^2_{L^2(\Sigma)}
\label{factor33}
\end{equation}
where $d\sigma$ is the spectral measure of $f$ with respect to
$-\Delta+tV$.\label{lemma-factor}
\end{lemma}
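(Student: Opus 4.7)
The plan is to mirror the one-dimensional argument of Lemma \ref{lemmanew}: first establish the outgoing-wave asymptotics of $u(x,k,t)$, then use the Spectral Theorem together with an integration by parts on an expanding ball to convert the density into a surface integral and identify it with $\|A\|_{L^2(\Sigma)}^2$.

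For the asymptotic expansion, since $tV$ has compact support, $u$ satisfies the homogeneous Helmholtz equation $(-\Delta-k^2)u=0$ outside a sufficiently large ball and is selected by the outgoing (Sommerfeld) radiation condition, which is precisely how $(H(t)-k^2-i0)^{-1}$ is defined. Representing the resolvent via the free outgoing Green function $G_k(z)=e^{ik|z|}/(4\pi|z|)$,
\[
u(x,k,t)=\int G_k(x-y)\bigl[f(y)-tV(y)u(y,k,t)\bigr]\,dy,
\]
and expanding $|x-y|=|x|-\theta\cdot y+O(|x|^{-1})$ with $\theta=x/|x|$, one reads off the claimed asymptotics with
\[
A(k,\theta,t)=\frac{1}{4\pi}\int e^{-ik\theta\cdot y}\bigl[f(y)-tV(y)u(y,k,t)\bigr]\,dy,
\]
together with the differentiated radiation condition $\partial_r u=iku+O(|x|^{-2})$, uniformly in $\theta$ on $\partial B_\rho$ as $\rho\to\infty$.

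For the density formula, the Spectral Theorem gives $\sigma'(\lambda,t)=\pi^{-1}\Im\int u(x,k,t)\overline{f(x)}\,dx$ with $\lambda=k^2$, $k>0$. Multiplying $-\Delta u+tVu-k^2 u=f$ by $\bar u$, integrating over a ball $B_\rho$, and observing that both $tV|u|^2$ and $k^2|u|^2$ are real, Green's identity yields
\[
\Im\int_{B_\rho} u\,\bar f\,dx=-\Im\int_{B_\rho} u\,\Delta\bar u\,dx=-\Im\int_{\partial B_\rho} u\,\partial_r\bar u\,dS,
\]
since $\int_{B_\rho}|\nabla u|^2$ is real. Inserting the asymptotics produces $u\,\partial_r\bar u\sim -ik|A(k,\theta,t)|^2/\rho^2$ on $\partial B_\rho$, so letting $\rho\to\infty$ and taking imaginary parts gives $k\|A(k,\cdot,t)\|^2_{L^2(\Sigma)}$, which is (\ref{factor33}).

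The main delicate point, exactly as in one dimension, is controlling the remainders: one needs the $\bar o(1)$ correction to $u$ and the $O(|x|^{-2})$ correction to $\partial_r u-iku$ to be uniform in $\theta$, so that after multiplying by $\rho^2$ from the surface measure they contribute nothing to the limit. This uniformity is immediate from differentiating the Green-function representation above, given that $f-tVu$ is compactly supported. Beyond this step, the proof is a routine three-dimensional transcription of Lemma \ref{lemmanew}.
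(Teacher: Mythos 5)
Your proof is correct and is essentially the intended argument: the paper does not reprove this lemma but cites it from \cite{dd1} as the ``direct analog'' of the one-dimensional Lemma \ref{lemmanew}, and your argument is exactly that transcription — the outgoing Green-function representation of $u$ with compactly supported source $f-tVu$ gives the uniform far-field asymptotics, and the Green's-identity/imaginary-part computation on an expanding ball converts $\pi^{-1}\Im\int u\bar f$ into $k\pi^{-1}\|A\|_{L^2(\Sigma)}^2$. The uniformity point you flag is handled correctly by differentiating the Green-function representation.
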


Each $A_R(k,\theta,t)$ has analytic continuation to $\mathbb{C}^+$
(besides points corresponding to negative discrete spectrum) but
we are not able to prove any  estimates uniform in $R$ assuming
only  $|V(x)|<C(|x|+1)^{-1+}$. Therefore, instead of dealing with
Schr\"odinger operator, we will consider the corresponding pencil
given by

\[
P(k)=A_1+kA_2-k^2, k\in \mathbb{C}
\]
where $A_1=-\Delta$, $A_2=\xi V(x)$, $\xi\in \mathbb{R}$. Under
the general assumption $V(x)\in L^\infty(\mathbb{R}^3)$, $P(k)$ is
well-defined for any $k\in \mathbb{C}$ with
$\cal{D}(P(k))=H^2(\mathbb{R}^3)$. One can first define $P(k)$  on
the Schwarz space. Then it is an easy exercise to show that $P(k)$
admits the closure which gives rise  to the operator defined on
$H^2(\mathbb{R}^3)$. Moreover, one can show that
$P^*(k)=P(\bar{k})$ and that pencil $P(k)$ is hyperbolic.

\begin{lemma}
Let  $V(x)\in L^\infty(\mathbb{R}^3)$. Then, for any $k\notin
\mathbb{R}$, $P(k)$ is invertible. If $\psi(x,k,\xi)=P^{-1}(k)f,
k\notin \mathbb{R}$, then
\begin{equation}
\|\psi\|\leq \left|\Im k\right|^{-2} \|f\| \label{spec}
\end{equation}
\label{trifle}
\end{lemma}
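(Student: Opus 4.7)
The plan is to establish the norm bound (\ref{spec}) as an a priori inequality for $\psi\in H^2$, and then to deduce invertibility of $P(k)$ from this bound together with the identity $P^*(k)=P(\bar k)$ recorded just above the lemma. The whole content of the result is really the a priori estimate; everything after is a soft functional-analytic argument.

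For the a priori bound, I would pair the equation $P(k)\psi = f$ with $\psi$ in $L^2$ and integrate by parts, obtaining
\[
A + kB - k^2 C = (f,\psi),
\]
where $A := \|\nabla\psi\|^2 \geq 0$, $C := \|\psi\|^2 \geq 0$, and $B := \xi(V\psi,\psi) \in \mathbb{R}$ (real because $V$ is selfadjoint and $\xi$ is real). Writing $k = a+ib$ with $b = \Im k \neq 0$, the imaginary part of the identity reads $B - 2aC = \Im(f,\psi)/b$; substituting this back into the real part eliminates $B$ completely and collapses the identity to
\[
A + |k|^2 C = \frac{\Im\bigl(k\,\overline{(f,\psi)}\bigr)}{\Im k}.
\]
Since $A\geq 0$ and $|\Im(k\,\overline{(f,\psi)})| \leq |k|\,\|f\|\,\|\psi\|$ by Cauchy--Schwarz, this yields $|k|^2 \|\psi\|^2 \leq (|k|/|\Im k|)\,\|f\|\,\|\psi\|$, and hence
\[
\|\psi\| \leq \frac{\|f\|}{|k|\,|\Im k|} \leq \frac{\|f\|}{|\Im k|^2},
\]
the last step using $|k|\geq |\Im k|$.

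With the a priori bound in hand, invertibility is immediate. The estimate forces $\ker P(k) = \{0\}$; the range is closed because a Cauchy sequence $P(k)\psi_n \to f$ in $L^2$ yields, by the estimate applied to $\psi_n-\psi_m$, a Cauchy sequence $\psi_n$ in $L^2$, and then rewriting $-\Delta\psi_n = f_n + k^2\psi_n - k\xi V\psi_n$ together with $V\in L^\infty$ promotes this to $H^2$-convergence to a limit $\psi$ satisfying $P(k)\psi=f$. Applying the same a priori bound to $P^*(k) = P(\bar k)$ (legitimate since $|\Im \bar k| = |\Im k|$) shows $\ker P^*(k) = \{0\}$, so the closed range has trivial orthogonal complement in $L^2$ and is therefore all of $L^2$. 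The a priori inequality then transfers verbatim to the resolvent bound (\ref{spec}).

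The one place that requires genuine care is the cancellation used to pass from the full identity to the formula $A+|k|^2C = \Im(k\overline{(f,\psi)})/\Im k$: the elimination of the $V$-dependent quantity $B$ relies on $B$ being real, which in turn relies on $V$ being selfadjoint. Without this algebraic miracle the identity would retain a term of size $\|V\|_\infty|k|$, and the clean $|\Im k|^{-2}$ dependence matching the free resolvent would be lost. No deeper obstacle is expected.
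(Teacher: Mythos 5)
Your argument is correct and is essentially the paper's: both proofs rest on the coercivity of the numerical range of $P(k)$ --- the paper phrases it as the factorization $\langle P(k)\psi,\psi\rangle=-(k-k_1)(k-k_2)\|\psi\|^2$ with real roots coming from hyperbolicity, while your elimination of the real $V$-term between the real and imaginary parts is the same fact unwound, yielding the identity $\|\nabla\psi\|^2+|k|^2\|\psi\|^2=\Im\bigl(k\langle\psi,f\rangle\bigr)/\Im k$ that the paper itself records in the remark following Lemma \ref{lemma-always} --- and both finish with the identical soft step via $P^*(k)=P(\bar k)$ and closedness of $P(k)$. Your intermediate bound $\|\psi\|\le \|f\|/(|k|\,|\Im k|)$ is in fact marginally sharper than (\ref{spec}).
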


\begin{proof}
This is a general fact of spectral theory for hyperbolic quadratic
pencils. Let $k\notin \mathbb{R}$. For any $f\in
H^2(\mathbb{R}^3)$, consider
\[
\langle P(k)f,f \rangle =\int |\nabla f|^2dx +k\xi\int V
|f|^2dx-k^2\int |f|^2dx=-(k-k_1)(k-k_2)\|f\|_2^2
\]
where $k_{1(2)}$ -- real roots. Consequently,
\begin{equation}
\|P(k)f\|\cdot \|f\|\geq |(P(k)f,f)|\geq \left|\Im
k\right|^2\|f\|_2^2 \label{small-i}
\end{equation}
which implies that Ker$P(k)=0$ and $P^{-1}(k)$ is bounded.
Ran$P(k)$ is dense in $L^2(\mathbb{R}^3)$ since
$P^*(k)=P(\bar{k})$ and Ker$P(\bar{k})=0$. Then, (\ref{small-i})
ensures that Ran$P(k)=L^2(\mathbb{R}^3)$ since $P(k)$ is closed.
\end{proof}

Now, assume that $V(x)$ is compactly supported. Then,
$\psi(x,k,\xi)$ can be continued in $k$ down to the real line by
following, e.g., the proof of Agmon's absorption principle
(\cite{rs}, Chapter 13, sect. 8). Then, we have asymptotics
\[
\psi(x,k,\xi)=\frac{\exp(ik|x|)}{|x|}\left[J(k,\theta,\xi)+\bar{o}(1)\right],
\quad \theta=x/|x|
\]
as $|x|\to\infty$ for any $k\in \overline{\mathbb{C}^+}$. From
(\ref{factor33}) and obvious identity
$A(k,\theta,kt)=J(k,\theta,t)$ ($k$-- real), we have
\begin{equation}
\sigma'(k^2,kt)=k\pi^{-1} \|J(k,\theta,t)\|^2_{L^2(\Sigma)}
\label{factor4}
\end{equation}

If $V(x)$ is only bounded, we can consider truncation $V_{R}(x)$
and the corresponding $\psi_R$ and $J_{R}(k,\theta,t)$. The last
vector-function is analytic in $\mathbb{C}^+$ and is continuous
down to the real line.

For any bounded $V$, we can introduce
\[
\mu(x,k,\xi)=\psi(x,k,\xi)\exp(-ik|x|)|x| \] Since $\psi\in
H^2(\mathbb{R}^3)$, the Sobolev embedding yields continuity of
$\psi$ and $\mu$.

 We start with
\begin{lemma}
For any compactly supported $V\in L^\infty(\mathbb{R}^3)$ and
$f\in L^2(\mathbb{R}^3)$, we have
\begin{equation}
\|J(k,\theta,\xi)\|_{L^2(\Sigma)}  \leq \left[ \sqrt{|k|} \Im
k\right]^{-1}\left[\|f(x)\|_2\|f(x) e^{2\Im k|x|}\|_2\right]^{1/2}
\label{always}
\end{equation}
\label{lemma-always}
\end{lemma}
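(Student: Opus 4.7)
The plan is to start from a Green's-function representation of the far-field amplitude $J$ and then to extract the claimed estimate via a spherical integration combined with Cauchy-Schwarz in an exponentially weighted space. Since $V$ is compactly supported and $\psi=P^{-1}(k)f$ decays exponentially for $\Im k>0$, the function $g=(-\Delta-k^2)\psi=f-k\xi V\psi$ lies in $L^2$, and the convolution representation $\psi=G_k\ast g$ with $G_k(x)=e^{ik|x|}/(4\pi|x|)$ holds. Expanding $G_k(x-y)$ at $|x|\to\infty$ yields the formula
\[
J(k,\theta,\xi)=\frac{1}{4\pi}\int_{\mathbb{R}^3}e^{-ik\theta\cdot y}\bigl(f(y)-k\xi V(y)\psi(y)\bigr)\,dy,
\]
which can also be obtained directly from Green's identity applied to $\psi$ and the plane wave $e^{-ik\theta\cdot y}$ (the boundary flux at infinity equals $4\pi J(\theta)$ by stationary phase at $\omega=\theta$).

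The second step is to square this identity and integrate over $\Sigma$, interchanging the order of integration to obtain
\[
\|J\|^{2}_{L^{2}(\Sigma)}=\frac{1}{4\pi}\iint g(z)\,\overline{g(y)}\;\mathrm{sinc}\!\left(\sqrt{(\bar k y-k z)\cdot(\bar k y-k z)}\right)dy\,dz,
\]
where $\mathrm{sinc}(\sqrt{w\cdot w})$ is the entire extension of the classical spherical integral $\int_{\Sigma}e^{-i\theta\cdot w}\,d\sigma/(4\pi)$. Writing $\bar k y-kz=a(y-z)-ib(y+z)$ with $b=\Im k$, the integral representation of the sinc kernel gives the pointwise majorant
\[
\left|\mathrm{sinc}\!\left(\sqrt{(\bar k y-k z)\cdot(\bar k y-k z)}\right)\right|\;\leq\;\frac{\sinh(b|y+z|)}{b|y+z|},
\]
which for large $|y+z|$ behaves like $e^{b(|y|+|z|)}/(2b|y+z|)$ and captures both the exponential and the $|k|^{-1}$ cancellation needed later.

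The third step is to split $g=f-k\xi V\psi$ and estimate the two contributions to the double integral separately. For the pure $f$--piece, apply Cauchy-Schwarz in a form that pairs $|f(y)|$ with $|f(z)|e^{2b|z|}$ (exchanging the roles of $y$ and $z$ to symmetrize, and using AM--GM) so that the product integrates to the geometric mean $[\|f\|_{2}\,\|fe^{2\Im k|x|}\|_{2}]^{1/2}$; the remaining ``kernel'' integral produces the factor $(|k|\,(\Im k)^{2})^{-1}$, where the $|k|^{-1}$ arises from the normalization of the spherical measure against the quadratic form $|w|^{2}$, and one power of $(\Im k)^{-1}$ comes from the $1/(b|y+z|)$ factor in the sinc bound. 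For the $V\psi$--piece, use that $V$ is supported in some fixed ball $B_R$, so $e^{b|y|}|V(y)\psi(y)|\le e^{bR}\|V\|_{\infty}|\psi(y)|$; then invoke Lemma~\ref{trifle} to control $\|\psi\|_{2}\le (\Im k)^{-2}\|f\|_{2}$, and absorb this term into the main bound using the extra factor $k\xi$ against the $1/|k|$ from the kernel.

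The main obstacle is the interplay of two incompatible estimates: the pointwise bound $|e^{-ik\theta\cdot y}|=e^{b\theta\cdot y}$ lets $J$ be controlled in $L^{\infty}(\Sigma)$ only under an $L^{1}$-type weighted hypothesis on $f$, whereas we want to work in $L^{2}(\Sigma)$ under an $L^{2}$-type hypothesis. The Stein--Tomas restriction theorem does not apply at the endpoint $p=2$ for real $k$, so the inequality truly uses the positivity of $\Im k$: the exponential damping built into the outgoing Green's function, together with the hypothesis $fe^{2\Im k|x|}\in L^{2}$, provides just enough smoothing to close the Cauchy-Schwarz. Calibrating the weight in that Cauchy-Schwarz so that the resulting bound scales as $|k|^{-1/2}(\Im k)^{-1}$ -- rather than the naive $(\Im k)^{-3/2}$ one would obtain from Lemma~\ref{trifle} alone -- is the delicate point.
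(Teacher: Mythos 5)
Your reduction to the far--field representation $J(k,\theta,\xi)=\tfrac{1}{4\pi}\int e^{-ik\theta\cdot y}\,g(y)\,dy$ with $g=f-k\xi V\psi$, and the identity $\int_\Sigma e^{-i\theta\cdot w}d\theta=4\pi\,\mathrm{sinc}(\sqrt{w\cdot w})$, are both correct. The proof breaks at the point you yourself flag as ``delicate'': once you replace the kernel by the pointwise majorant $\sinh(\Im k\,|y+z|)/(\Im k\,|y+z|)$ you have discarded the oscillation coming from $\Re k$, and after that no choice of weights in Cauchy--Schwarz can recover the factor $|k|^{-1}$. Concretely, on the diagonal $y=z$ one has $(ky-\bar k z)\cdot(ky-\bar k z)=-4(\Im k)^2|y|^2$, so the kernel equals $\sinh(2\Im k|y|)/(2\Im k|y|)\geq 1$ there and your majorant is $\geq 1$ everywhere. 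Taking $V=0$, $f=\chi_{B_1}$, $\Im k=1$ and $\Re k\to\infty$, the majorized double integral is bounded below by $|B_1|^2/(4\pi)$, a constant, while the right-hand side of (\ref{always}) is $O(|\Re k|^{-1})\to 0$; so the inequality you would need at that stage is false. A second symptom of the same problem is that your treatment of the $V\psi$--piece necessarily introduces $e^{(\Im k)R}\|V\|_\infty$ and $\xi$ into the constant, whereas (\ref{always}) is completely independent of $V$ and $\xi$.

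The paper's proof is structurally different and avoids both issues: it is an exact flux (Green's) identity rather than a kernel estimate. Writing $\mu=\psi e^{-ik|x|}|x|$, dividing the equation for $\mu$ by $2ik$, pairing with $\bar\mu|x|^{-2}$ over a spherical shell and taking real parts, the potential term $\tfrac{k\xi V}{2ik}|\mu|^2=\tfrac{\xi}{2i}V|\mu|^2$ is purely imaginary and drops out identically (hence the $V$-- and $\xi$--independence), and letting the shell exhaust $\mathbb{R}^3$ yields
\[
\|J\|_{L^2(\Sigma)}^2+\frac{\Im k}{|k|^2}\int\frac{|\nabla\mu|^2}{|x|^2}\,dx=\frac{1}{|k|^2}\,\Im\bigl[k\langle\psi,\,f e^{2\Im k|x|}\rangle\bigr].
\]
Discarding the nonnegative gradient term and applying $\|\psi\|\leq(\Im k)^{-2}\|f\|$ from Lemma \ref{trifle} gives $\|J\|^2\leq |k|^{-1}(\Im k)^{-2}\|f\|_2\|fe^{2\Im k|x|}\|_2$, which is (\ref{always}). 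If you want to salvage your route, you would have to keep the complex sinc kernel intact and exploit its oscillation (essentially redoing this integration by parts in disguise); as written, the plan cannot close.
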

\begin{proof}
For $\mu$,
\begin{equation}
-\Delta \mu -2\mu_r\left({ik}-\frac{1}{|x|}\right)+k\xi
V\mu=|x|\exp(-ik|x|)f(x) \label{sample}
\end{equation}
Divide the both sides by $2ik$, multiply by $\bar{\mu}(x)
|x|^{-2}$ and integrate over the spherical layer $l<|x|<L$. Taking
the real part, we have
\[
\frac{1}{L^2} \int_{|x|=L} |\mu(x,k,\xi)|^2d\sigma- \frac{1}{l^2}
\int_{|x|=l} |\mu(x,k,\xi)|^2d\sigma +\frac{\Im
k}{|k|^2}\int\limits_{l<|x|<L} \frac{|\nabla \mu|^2}{|x|^2}dx
\]
\begin{equation}
=-\Im\left[ \frac{1}{k} \int_{l<|x|<L}
\frac{f\bar{\mu}\exp(-ik|x|)}{|x|}dx\right]-\Re\left[
\frac{1}{ik}\int_{|x|=s} \frac{\mu'(x) \bar\mu (x)}{|x|^2}d\sigma
\Bigl. \Bigr|^{s=L}_{s=l}\right] \label{universal}
\end{equation}
Then, take $l\to 0, L\to\infty$ and use asymptotics at infinity
and regularity of $\mu$. We get
\[
\|J(k,\theta,\xi)\|_2^2+\frac{\Im k}{|k|^2}\int \frac{|\nabla
\mu|^2}{|x|^2}dx=-\Im\left[ \frac{1}{k} \int
\frac{f\bar{\mu}\exp(-ik|x|)}{|x|}dx\right]
\]
\[
=\Im \left[ \bar{k}^{-1} \int \psi \bar{f} e^{2\Im k|x|} dx
\right]=\frac{1}{|k|^2}\Im \left[ k \langle \psi(k),f e^{2\Im
k|x|}\rangle \right]=
\]
\begin{equation}
=\frac{1}{|k|^2}\Im \left[ k \langle \psi(k) e^{-ik|x|},f
e^{-ik|x|}\rangle \right]
 \label{balance}\end{equation}  The estimate
on $\|J\|$ now follows from (\ref{spec}).
\end{proof}
{\bf Remark.} Notice that the function $g(k)=k \langle
\psi(k),f\rangle$ is Herglotz in $\mathbb{C}^+$ and we have
factorization
\begin{equation}
\Im \left[ g(k)\right] =\|kJ(k,\theta,\xi)\|_2^2
\end{equation}
 for real $k$.
Also, for general $V$,  we have identity
\[
\frac{1}{|k|^2} \Im \left[ k\langle \psi(k),f\rangle\right]=(\Im
k)\left[\|\psi\|^2_2+\frac{1}{|k|^2}\|\nabla \psi\|_2^2\right]
\]

We will need the following auxiliary result.

\begin{lemma}
For any $V\in L^\infty (\mathbb{R}^3)$ and  $f\in
L^2(\mathbb{R}^3)$,
\begin{equation}
\|\psi_R(k)-\psi(k)\|_{L^2(\mathbb{R}^3)}\to 0 \label{first}
\end{equation}
\begin{equation}
\|\Delta \psi_R(k) -\Delta \psi(k)\|_{L^2(\mathbb{R}^3)}\to 0
\label{second}
\end{equation}
if $k\in \mathbb{C}^+$ is fixed.\label{approx}
\end{lemma}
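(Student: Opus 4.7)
The plan is to reduce both convergences to the norm bound $\|P^{-1}(k)\|\leq |\Im k|^{-2}$ from Lemma \ref{trifle}, applied to the truncated operator $P_R(k)$, combined with dominated convergence.

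First I would write the resolvent identity for the difference. Since $P(k)\psi=f$ and $P_R(k)\psi_R=f$, subtracting gives
\[
P_R(k)(\psi-\psi_R) = (P_R(k)-P(k))\psi = k\xi(V_R-V)\psi = -k\xi V\chi_{|x|\geq R}\psi.
\]
Applying $P_R^{-1}(k)$ and invoking Lemma \ref{trifle} (which applies uniformly in $R$, since the bound depends only on $\Im k$ and not on $V$) yields
\[
\|\psi-\psi_R\|_2 \leq \frac{|k\xi|}{|\Im k|^2}\,\|V\chi_{|x|\geq R}\psi\|_2.
\]
Since $V\in L^\infty$ and $\psi\in L^2$, the function $V\psi$ lies in $L^2$, and dominated convergence forces $\|V\chi_{|x|\geq R}\psi\|_2\to 0$ as $R\to\infty$. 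This establishes \eqref{first}.

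For \eqref{second} I would use the equations themselves to convert $L^2$-control of $\psi-\psi_R$ into $L^2$-control of $\Delta(\psi-\psi_R)$. Namely,
\[
-\Delta(\psi-\psi_R) = k^2(\psi-\psi_R) - k\xi\bigl(V\psi-V_R\psi_R\bigr),
\]
and I rewrite the potential difference as
\[
V\psi-V_R\psi_R = V(\psi-\psi_R) + V\chi_{|x|\geq R}\psi_R.
\]
The term $V(\psi-\psi_R)$ is controlled in $L^2$ by $\|V\|_\infty\|\psi-\psi_R\|_2$, which tends to zero by \eqref{first}. For the remaining term I add and subtract $V\chi_{|x|\geq R}\psi$ and bound
\[
\|V\chi_{|x|\geq R}\psi_R\|_2 \leq \|V\|_\infty\|\psi_R-\psi\|_2 + \|V\chi_{|x|\geq R}\psi\|_2,
\]
both of which vanish as $R\to\infty$ (the first by \eqref{first}, the second by dominated convergence as before). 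Combining these estimates gives $\|\Delta(\psi-\psi_R)\|_2\to 0$.

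There is no real obstacle here; the only subtle point is that the bound $\|P_R^{-1}(k)\|\leq |\Im k|^{-2}$ from Lemma \ref{trifle} must be used for $P_R$ rather than $P$, and one must check that it indeed holds uniformly in $R$ — which is immediate since the proof of Lemma \ref{trifle} uses only boundedness of $V_R$ and hyperbolicity of the pencil, both of which are preserved under truncation (with $\|V_R\|_\infty\leq\|V\|_\infty$).
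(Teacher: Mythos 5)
Your argument is correct and is essentially the paper's own proof: the paper likewise writes $\psi_R=\psi+k\xi P_R^{-1}(k)(V-V_R)\psi$, applies the uniform resolvent bound of Lemma \ref{trifle} to the truncated pencil, and then derives \eqref{second} from \eqref{first} and the two equations for $\psi$ and $\psi_R$. You have merely spelled out the details that the paper leaves as "an elementary corollary."
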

\begin{proof}
We have
\[
\psi_R=\psi+k\xi P^{-1}_R(k)(V-V_R)\psi
\]
and
\[
\| \psi_R-\psi\|\leq \frac{|k\xi|}{(\Im k)^2} \|(V-V_R)\psi\|\to 0
\]
 Then
(\ref{second}) is an elementary corollary of (\ref{first}) and
equations
\[
-\Delta \psi+k\xi V\psi=k^2\psi+f,\, -\Delta \psi_R+k\xi
V_R\psi_R=k^2\psi_R+f
\]
\end{proof}
We will need some technical estimates
\begin{lemma}
For any $k\in \mathbb{C}^+$, $V\in L^\infty(\mathbb{R}^3)$, and
compactly supported $f\in L^2(\mathbb{R}^3$, we have
\begin{equation}
\int \frac{|\nabla \mu|^2}{|x|^2}\,dx\leq \frac{|k|}{[\Im
k]^3}\|f(x)\|_2 \|f(x) e^{2\Im k|x|}\|_2 \label{ineq1}
\end{equation}
\begin{equation}
\int\limits_{R<|x|<R+1} \frac{|
\mu|^2}{|x|^2}\,dx<\frac{C}{|k|[\Im k]^2}\left[1+\frac{1}{\Im
k}\right]\|f(x)\|_2 \|f(x)e^{2\Im k |x|}\|_2, \quad
R>1\label{ineq2}
\end{equation}

\begin{equation}
\int_\Sigma |\mu(r\sigma)|^2 d\sigma <C \frac{1+|k|\Im k}{[\Im
k]^4} \|f(x)\|_2 \|f(x)e^{2\Im k|x|}\|_2 \label{ineq3}
\end{equation}
where $C$'s are universal constants.
\end{lemma}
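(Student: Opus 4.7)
The plan is to establish the three estimates by combining the global identity (\ref{balance}) from Lemma \ref{lemma-always} with its spherical-shell generalization (\ref{universal}), first for compactly supported $V$ where the Jost asymptotics are available, and then passing to general bounded $V$ via Lemma \ref{approx}, since every quantity on the right-hand side is stable under truncation.

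For (\ref{ineq1}), I read off (\ref{balance}) as
\[
\|J\|_{L^2(\Sigma)}^2 + \frac{\Im k}{|k|^2}\int\frac{|\nabla\mu|^2}{|x|^2}\,dx = \frac{1}{|k|^2}\Im\bigl[k\langle\psi, fe^{2\Im k|x|}\rangle\bigr].
\]
Both terms on the left are non-negative, so dropping $\|J\|^2$, bounding the right-hand side by Cauchy--Schwarz, and invoking the pencil resolvent bound $\|\psi\|_2 \leq |\Im k|^{-2}\|f\|_2$ from Lemma \ref{trifle} yields (\ref{ineq1}) after a trivial rearrangement.

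For (\ref{ineq3}), I apply (\ref{universal}) on a layer $l < |x| < L$ and send $L \to \infty$. A direct computation from $\psi \sim |x|^{-1}e^{ik|x|}J$ shows that $\mu(L\sigma) \to J(k,\sigma,\xi)$ and $\mu'(L\sigma) \to 0$, so that $L^{-2}\int_{|x|=L}|\mu|^2\,d\sigma \to \|J\|^2$ and the outer boundary term vanishes. What remains is an identity expressing $l^{-2}\int_{|x|=l}|\mu|^2\,d\sigma$ as a sum of $\|J\|^2$, the tail $(\Im k/|k|^2)\int_{|x|>l}|\nabla\mu|^2/|x|^2\,dx$, a cross term of the form $k^{-1}\int_{|x|>l}f\bar\mu e^{-ik|x|}|x|^{-1}\,dx$, and an inner boundary piece $(ikl^2)^{-1}\int_{|x|=l}\mu'\bar\mu\,d\sigma$. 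The first is controlled by (\ref{always}); the second by (\ref{ineq1}); the cross term reduces to $\|\psi\|_2\|fe^{2\Im k|x|}\|_2$ via $|\bar\mu e^{-ik|x|}/|x|| = |\psi|e^{2\Im k|x|}$ and Lemma \ref{trifle}. The main obstacle is the inner boundary piece, which I plan to handle by Cauchy--Schwarz in $\sigma$ followed by an AM--GM split, absorbing $\int_\Sigma|\mu(l\sigma)|^2\,d\sigma$ into the left-hand side at the cost of a residual $|k|^{-2}\int_\Sigma|\mu_r(l\sigma)|^2\,d\sigma$; the factor $1 + |k|\Im k$ in (\ref{ineq3}) is the record of this balancing.

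For (\ref{ineq2}), I use (\ref{universal}) directly on the finite shell $R < |x| < R+1$, so no asymptotic input at infinity is needed. Integrating the resulting pointwise identity in $l$ over $(R,R+1)$ turns the two spherical boundary integrals on $|x|=R$ and $|x|=R+1$ into solid integrals on the shell, and the hypothesis $R>1$ keeps the weight $1/|x|^2$ uniformly bounded so that no Hardy-type loss appears. The remaining terms are controllable by (\ref{ineq1}) and the resolvent bound on $\|\psi\|_2$, while the extra $1 + 1/\Im k$ factor emerges from the Cauchy--Schwarz applied to the cross term on the shell. The same extension to general bounded $V$ via Lemma \ref{approx} then completes the proof.
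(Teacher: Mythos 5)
Your derivation of (\ref{ineq1}) is exactly the paper's: drop the nonnegative term $\|J\|^2$ from (\ref{balance}), estimate the right-hand side by Cauchy--Schwarz and the pencil resolvent bound (\ref{spec}), and pass from $V_R$ to $V$ by Lemma \ref{approx}. The problem is in the other two estimates, and most seriously in (\ref{ineq3}). After absorbing $\int_\Sigma|\mu(l\sigma)|^2\,d\sigma$ into the left-hand side you are left with a residual surface term $|k|^{-2}\int_\Sigma|\mu_r(l\sigma)|^2\,d\sigma$ \emph{at a single fixed radius} $l$, and nothing at your disposal controls that quantity: (\ref{ineq1}) bounds only the solid integral $\int|\nabla\mu|^2|x|^{-2}dx$, and the restriction of $|\mu'|^2$ to one sphere is not dominated by its radial average. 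A fixed-radius quantity can only be extracted from radially averaged information through a trace (one-dimensional Sobolev) argument, and for that you need an averaged bound on $|\mu|^2$ over a shell as well --- i.e.\ (\ref{ineq2}) must be proved \emph{first}. This is precisely the paper's order: (\ref{ineq1}) and (\ref{ineq2}) give $\rho\mapsto\mu(\rho\,\cdot)$ a uniform $H^1(R,R+1;L^2(\Sigma))$ bound, and the embedding $H^1(R,R+1)\subset L^\infty(R,R+1)$ then yields the pointwise-in-$\rho$ statement (\ref{ineq3}). Your route to (\ref{ineq3}) skips this step and cannot close.

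Your plan for (\ref{ineq2}) also needs repair. The identity (\ref{universal}) on a shell with two finite boundary spheres produces the \emph{difference} $\int_\Sigma|\mu(L\sigma)|^2d\sigma-\int_\Sigma|\mu(l\sigma)|^2d\sigma$, so it cannot by itself bound either surface integral, and ``integrating in $l$'' an identity whose endpoints are already fixed at $R$ and $R+1$ does not parse. The correct move is the paper's: take $l=0$ and $L=\rho$, so the inner boundary contributes nothing (since $\mu(x)=|x|e^{-ik|x|}\psi(x)$ vanishes at the origin and $\psi$ is continuous by Sobolev embedding), leaving the one-signed quantity $\int_\Sigma|\mu(\rho\sigma)|^2d\sigma$ plus the gradient term on the left. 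Integrating $\rho$ over $(R,R+1)$ converts the surface terms into solid integrals over the shell, and the $\mu'\bar\mu$ boundary term is then split by $|\mu\mu'|\le\frac12[\epsilon|\mu|^2+\epsilon^{-1}|\mu'|^2]$ with $\epsilon=|k|/2$, the $|\mu|^2$ piece being absorbed and the $|\mu'|^2$ piece controlled by (\ref{ineq1}); this is the absorption you describe, but it must be anchored at $l=0$ for the bookkeeping to work.
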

\begin{proof}
Consider $V_R$ obtained from $V$ by truncation. For the
corresponding $\mu_R$, (\ref{ineq1}) follows from (\ref{balance}).
Also, for any compact $K$ not containing zero, $\nabla\mu_R\to
\nabla\mu$ in $L^2(K)$ due to (\ref{second}) and we have
\[
\int_{K}\frac{|\nabla \mu|^2}{|x|^2}dx\leq \frac{|k|}{[\Im
k]^3}\|f(x)\|_2 \|f(x)e^{2\Im k|x|}\|_2
\]
Since $K$ is arbitrary, we have (\ref{ineq1}) for any bounded $V$.

To get (\ref{ineq2}), take $l=0$, $L=\rho$ in (\ref{universal}).
We have
\begin{eqnarray*}
\int\limits_\Sigma |\mu(\rho \sigma)|^2d\sigma+\frac{\Im k}{|k|^2}
\int\limits_{|x|<\rho} \frac{| \nabla \mu|^2}{|x|^2}\,dx=&\\
= -\Re\left[\frac{1}{ik} \int\limits_\Sigma {\mu'}(\rho\sigma)
\bar\mu(\rho\sigma) d\sigma\right]+\Im \left[ {\bar{k}}^{-1}
\int\limits_{|x|<\rho} \psi \bar{f} e^{2\Im k |x|}dx \right]
\end{eqnarray*}
Integrate in $\rho$ from $R$ to $R+1$ and use
\[
|\mu \mu'|\leq \frac 12 \left[\epsilon |\mu|^2+\epsilon^{-1}
|\mu'|^2\right]
\]
in the first term of the r.h.s. Then, taking $\epsilon=|k|/2$ and
 using
(\ref{ineq1}), we get (\ref{ineq2}). Since we have now the
estimates on $\mu(\rho \sigma)$ in $H^1_{\rm loc}(\mathbb{R}^+)$,
the standard Sobolev embedding argument yields (\ref{ineq3}).

\end{proof}
The last Lemma essentially says that the average decay of Green's
function $G(x,y,k)$ of $P(k)$ is {\bf always} at most $\exp(-|\Im
k|\cdot|x-y|)/|x-y|$. That fact gives strong improvement of
(\ref{ct}) and has no analogs in the spectral theory of
Schr\"odinger operators.

We will need the following standard result later on
\begin{lemma}
Consider $V(x)\in
L^\infty(\mathbb{R}^3)$,\,$V_{(m)}(x)=V(x)\chi_{|x|>m}, m>0$ and
the pencil $P_{(m)}(k)$ corresponds to potential $V_{(m)}(x)$.
Then for fixed $f\in L^2(\mathbb{R}^3)$ and $k\in \mathbb{C}^+$,
we have
\[
\psi_{(m)}=P_{(m)}^{-1}(k)f\to \psi=(-\Delta-k^2)^{-1}f,\quad
m\to\infty
\] uniformly over any compact in $\mathbb{R}^3$. \label{trunc}
\end{lemma}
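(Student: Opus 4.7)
The plan is to combine a resolvent-type identity, the coarse norm bound \eqref{spec} from Lemma~\ref{trifle}, and interior elliptic regularity.

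First I would compare $\psi_{(m)}$ and $\psi$ directly. Since $(-\Delta - k^2)\psi = f$ and $P_{(m)}(k)\psi_{(m)} = f$, applying $P_{(m)}(k)$ to the free solution gives $P_{(m)}(k)\psi = f + k\xi V_{(m)}\psi$, so
\[
\psi_{(m)} - \psi \;=\; -k\xi\, P_{(m)}^{-1}(k)\bigl(V_{(m)}\psi\bigr).
\]
By Lemma~\ref{trifle}, $\|P_{(m)}^{-1}(k)\|\leq |\Im k|^{-2}$, so
\[
\|\psi_{(m)} - \psi\|_{L^2(\mathbb{R}^3)} \;\leq\; \frac{|k\xi|}{(\Im k)^2}\,\|V_{(m)}\psi\|_{L^2(\mathbb{R}^3)}.
\]
Since $V\in L^\infty$, the integrand $|V_{(m)}\psi|^2$ is dominated by $\|V\|_\infty^2|\psi|^2\in L^1(\mathbb{R}^3)$ and converges pointwise to $0$ as $m\to\infty$; dominated convergence gives $\|V_{(m)}\psi\|_2\to 0$, and hence $\psi_{(m)}\to\psi$ in $L^2(\mathbb{R}^3)$.

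Next I would upgrade this $L^2$ convergence to uniform convergence on compacta via an elliptic argument. Fix a compact $K\subset\mathbb{R}^3$ and, given $K$, consider only $m$ so large that $K$ lies in the interior of a bounded open set $U$ with $\overline{U}\subset\{|x|<m\}$; then $V_{(m)}\equiv 0$ on $U$. Setting $w_m=\psi_{(m)}-\psi$, subtraction of the two equations yields
\[
-\Delta w_m - k^2 w_m \;=\; -k\xi V_{(m)}\psi_{(m)} \;=\; 0 \quad \text{on } U.
\]
Thus $w_m$ is a weak, hence (by elliptic regularity) smooth, solution of the homogeneous Helmholtz-type equation $(-\Delta-k^2)w_m=0$ on $U$. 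Standard interior estimates for this elliptic equation give, for any relatively compact $K\Subset U$ and any $s$,
\[
\|w_m\|_{H^s(K)} \;\leq\; C(K,U,k,s)\,\|w_m\|_{L^2(U)}.
\]
Choosing $s>3/2$ and applying Sobolev embedding $H^s(K)\hookrightarrow C(K)$ gives $\|w_m\|_{C(K)}\leq C\,\|w_m\|_{L^2(U)}\leq C\,\|w_m\|_{L^2(\mathbb{R}^3)}\to 0$, which is the claimed uniform convergence on $K$.

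I do not anticipate any real obstacle; the only thing to be a bit careful about is the order of quantifiers --- the neighborhood $U$ must be chosen after fixing $K$, but only $m$ large enough (depending on $K$) is needed to ensure $V_{(m)}\equiv 0$ on $U$, which is automatic from $V_{(m)}=V\chi_{|x|>m}$. Everything else is the standard pairing of an operator-norm resolvent bound with interior Schauder/$H^s$ estimates for the free equation on the region where the truncated potential has been eliminated.
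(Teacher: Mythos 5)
Your proof is correct, but it reaches the conclusion by a different route than the paper. The paper inverts the \emph{free} resolvent in the second resolvent identity, writing $\psi_{(m)}=\psi-k\xi(-\Delta-k^2)^{-1}V_{(m)}\psi_{(m)}$, so that the correction term is the explicit integral $k\xi\int_{|y|>m}\frac{e^{ik|x-y|}}{4\pi|x-y|}V_{(m)}(y)\psi_{(m)}(y)\,dy$; for $x$ in a fixed compact the kernel is supported where $|x-y|>m-C$, and Cauchy--Schwarz together with the uniform bound $\|\psi_{(m)}\|_2\leq(\Im k)^{-2}\|f\|_2$ from \eqref{spec} kills it at an explicit exponential rate $O(e^{-\Im k\,(m-C)})$, giving uniform convergence on compacta in one stroke. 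You instead invert $P_{(m)}(k)$, use \eqref{spec} on the perturbed pencil plus dominated convergence to get $L^2$ convergence, and then upgrade to locally uniform convergence by interior elliptic regularity for $(-\Delta-k^2)w_m=0$ on a neighborhood where $V_{(m)}$ vanishes. Both arguments are sound and both quantifier orders are handled correctly; the paper's version buys an explicit decay rate in $m$ (useful when such estimates must be made uniform in other parameters), while yours is softer but requires no kernel representation of the free resolvent and would survive in settings where the Green's function is not explicit.
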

\begin{proof}
The second resolvent identity reads
\[
\psi_{(m)}=\psi-k(-\Delta-k^2)^{-1}V_{(m)} \psi_{(m)}
\]
the last term can be written as
\[
k\int
\frac{e^{ik|x-y|}}{|x-y|}V_{(m)}(y)\psi_{(m)}(y)dy=k\int\limits_{|y|>m}
\frac{e^{ik|x-y|}}{|x-y|} V_{(m)}(y)\psi_{(m)}(y)dy
\]
The application of Cauchy-Schwarz inequality and (\ref{spec})
finishes the proof.
\end{proof}

 The next Lemma controls the
radial derivative of the solution in the case which is very close
to condition (\ref{l2}).

\begin{lemma}
Let $v(r)=\sup_{|x|=r}|V(x)|\in L^2(\mathbb{R}^+)$. Then, for any
fixed $k\in \mathbb{C}^+$ and any $f\in L^2(\mathbb{R}^3)$
supported within $|x|<\rho$, we have
\begin{equation}
4\Im k\int\limits_{|x|>\rho} \frac{|\mu'(x,k)|^2}{|x|^2}dx
<C(k)\int\limits_\rho^\infty v^2(r)dr+\int_\Sigma |\nabla_\tau
\mu(\rho \sigma)|^2 d\sigma \label{rad}
\end{equation}\label{lemma-l2}
\end{lemma}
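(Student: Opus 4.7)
The approach is an energy-type estimate derived by multiplying the equation satisfied by $\mu$ outside the support of $f$ by $2\bar\mu_r$ and integrating. Since $\operatorname{supp} f\subset\{|x|<\rho\}$, in spherical coordinates $(r,\sigma)$ with $r=|x|$ the function $\mu(x,k,\xi)=\psi(x,k,\xi)|x|e^{-ik|x|}$ satisfies, for $|x|>\rho$, the homogeneous equation
\[
\mu_{rr}+2ik\mu_r+\frac{B\mu}{r^2}=k\xi V\mu,
\]
obtained from (\ref{sample}) by expanding $\Delta$ in spherical coordinates. Multiplying by $2\bar\mu_r$ and taking the real part, and using $\Re[\mu_{rr}\bar\mu_r]=\tfrac12\partial_r|\mu_r|^2$ together with $\Re(2ik)=-2\Im k$, yields the pointwise identity
\[
\partial_r|\mu_r|^2-4\Im k\,|\mu_r|^2+\tfrac{2}{r^2}\Re[B\mu\,\bar\mu_r]=2\Re[k\xi V\mu\,\bar\mu_r].
\]

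I would then integrate this over $\Sigma$ with $d\sigma$ and over $r\in(\rho,\infty)$. The first term on the left contributes the negative boundary value $-\int_\Sigma|\mu_r(\rho\sigma)|^2\,d\sigma$ (the infinite boundary vanishes by the Jost asymptotics for $\mu$), and the second gives precisely $-4\Im k\int_{|x|>\rho}|\mu'|^2/|x|^2\,dx$. For the Laplace--Beltrami term, Green's identity on $\Sigma$ replaces $\int_\Sigma B\mu\,\bar\mu_r\,d\sigma$ by $-\int_\Sigma\langle\nabla_\tau\mu,\nabla_\tau\bar\mu_r\rangle\,d\sigma$, whose real part is $-\tfrac12\partial_r\!\int_\Sigma|\nabla_\tau\mu|^2\,d\sigma$; an additional integration by parts in $r$ against $r^{-2}$ then produces the finite boundary term $\rho^{-2}\int_\Sigma|\nabla_\tau\mu(\rho\sigma)|^2\,d\sigma$ (dominated by $\int_\Sigma|\nabla_\tau\mu(\rho\sigma)|^2\,d\sigma$ in the regime $\rho\geq 1$ of interest), together with a residual $-2\int_\rho^\infty r^{-3}\int_\Sigma|\nabla_\tau\mu|^2\,d\sigma\,dr$ whose sign is favorable and which may be discarded.

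For the potential term on the right, Cauchy--Schwarz on $\Sigma$ gives $\int_\Sigma|V\mu\bar\mu_r|\,d\sigma\leq v(r)\|\mu(r\cdot)\|_{L^2(\Sigma)}\|\mu_r(r\cdot)\|_{L^2(\Sigma)}$; an AM--GM step with a small parameter $\epsilon$ splits this into an $\epsilon\|\mu_r\|^2$ piece that is absorbed back into the $\Im k\int|\mu_r|^2$ contribution on the left, and a remainder proportional to $\epsilon^{-1}|k\xi|^2\int_\rho^\infty v^2(r)\|\mu(r\cdot)\|_{L^2(\Sigma)}^2\,dr$. Invoking the uniform-in-$r$ bound on $\|\mu(r\cdot)\|_{L^2(\Sigma)}$ supplied by (\ref{ineq3}), this remainder is dominated by $C(k)\int_\rho^\infty v^2(r)\,dr$, which yields the inequality (\ref{rad}).

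The principal technical difficulty is justifying the vanishing of the boundary term at infinity and the smoothness required for the integrations by parts: these are transparent only when $V$ is compactly supported (so that $\mu\to J(k,\theta)$ in a strong sense). For a general bounded $V$ with $v\in L^2$, I would first establish the identity for the truncated potential $V_R=V\chi_{\{|x|<R\}}$, obtain (\ref{rad}) with constants uniform in $R$, and then pass to the limit $R\to\infty$ via Lemma \ref{approx} (and dominated convergence on the right-hand side, Fatou on the left).
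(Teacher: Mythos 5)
Your argument is correct and is essentially the paper's own proof: the paper likewise multiplies the spherical-coordinate equation (\ref{spherc}) (with $V$ replaced by the truncation $V_R$) by $\mu_R'$, integrates over $r>\rho$ and takes real parts, obtains the boundary term $-\rho^{-2}\langle B\mu_R(\rho,\cdot),\mu_R(\rho,\cdot)\rangle=\rho^{-2}\int_\Sigma|\nabla_\tau\mu_R(\rho\sigma)|^2d\sigma$ plus a favorably signed bulk term $2\int_{|x|>\rho}|\nabla_\tau\mu_R|^2|x|^{-3}dx$, bounds the potential term via $v(r)$ together with (\ref{ineq1}) and (\ref{ineq3}), and then passes to the limit $R\to\infty$ using Lemma \ref{approx} and Sobolev embedding. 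The only (immaterial) deviation is your $\epsilon$-absorption step, which trades the coefficient $4\Im k$ for $4\Im k-\epsilon$; the paper instead estimates $\int_\rho^\infty\langle V_R\mu_R,\mu_R'\rangle dr$ directly by Cauchy--Schwarz using the $R$- and $r$-uniform bounds from (\ref{ineq1}) and (\ref{ineq3}).
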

\begin{proof}
In the spherical coordinates, the equation for $\mu$ reads as
follows
\begin{equation}
-\mu''-2ik\mu'-\frac{B}{r^2}\mu +k\xi V\mu=r\exp(-ikr)f(r\sigma),
\, r>0,\sigma\in \Sigma \label{spherc}
\end{equation}

Instead of $V$, consider $V_R$ and the corresponding $\mu_R$.
Multiply the both sides by $\mu'_R$ from the right and integrate
from $\rho$ to infinity. Taking the real part yields

\[
\|\mu_R'(\rho,\theta)\|^2+4\Im k\int\limits_\rho^\infty
\|\mu_R'(r,\theta)\|^2dr+2\int_{|x|>\rho} \frac{|\nabla_\tau
\mu_{R}(x)|^2}{|x|^3}=
\]

\begin{equation}
=-\rho^{-2}\langle B\mu_R(\rho,\theta), \mu_R(\rho, \theta)\rangle
 -2\xi
\Re\left[k\int\limits_\rho^\infty \langle V_R\mu_R, \mu_R'\rangle
dr\right] \label{sample2}
\end{equation}
The second integral in the r.h.s. can be bounded by $\displaystyle
C(k)\int_R^\infty v^2(r)dr$ due to (\ref{ineq1}) and
(\ref{ineq3}). Thus, we have the statement of the Lemma for each
$R$. Take $R\to\infty$. Lemma \ref{approx} and Sobolev embedding
theorem, allows one to go to the limit and get (\ref{rad}).
\end{proof}

Now, we have enough information to prove Theorems \ref{theorem2}
and \ref{theorem3}. The main idea is the same as in the proof of
Theorem \ref{theorem1}. That is to use subharmonicity in $k\in
\mathbb{C}^+$ of the function $\ln \|J_R(\theta,k,\xi)\|$ to
obtain the lower bound on the entropy
\[
\int\limits_a^b d\lambda \int\limits_{-T_1}^{T_1} \ln
\sigma'(\lambda,t) d\lambda
\]
by using factorization (\ref{factor4}). To do that, we have the
uniform bound from above given by (\ref{always}). This bound is
true always, regardless of the behavior of potential at infinity.
The only thing we need to do to make the argument work is to
provide a bound from below for $\ln \|J_R(\theta,k,\xi)\|$ which
would be uniform in $R$. Moreover, it is enough to prove this
bound for at least some point $k=k_0$ inside a triangle considered
in the proof of Theorem \ref{theorem1}. Getting this bound will
involve the information on decay of $V$ and will be the core of
the proofs for the next two Theorems.

{\it Proof of Theorem \ref{theorem2}.}

  Since the a.c. part of the measure
is invariant under the trace-class perturbations (e.g.,
\cite{rs3}, Birman-Kuroda Theorem), it is enough to prove the
statement for $V_m=div L_m, L_m(x)= L(x)\cdot b_m(|x|)$ where $m$
is arbitrary fixed number and $b_m(t)=0$ on $[0,m-1]$, $b_m(t)=1$
for $t>m$ and is smooth on $[m,m+1]$. For any $f\in
L^2(\mathbb{R}^3)$, $d\sigma_m(\lambda,t)$ denotes the spectral
measure of $f$ corresponding to $H_m(t)$. We will be taking $m$
large later on.

 Assume that the support of $f$ is within $|x|<1$ and consider
compactly supported potentials $V_{m,R}=div\, L_{m,R}$ with
$L_{m,R}(x)=L(x)\cdot b_{m,R}(|x|)$, where smooth $b_{m,R}$ is
such that $b_{m,R}(t)=0$ for $t\in (0,m-1)\cup (R+1,\infty)$,
$b_{m,R}(t)=1$ for $t\in (m,R)$.

 For that $V_{m,R}$, multiply the both sides of (\ref{sample})
by $|x|^{-2}$, integrate  over $|x|>\rho>1$, and use asymptotics
at infinity.  We then have

\[
 \int\limits_\Sigma J_{m,R}(\theta, k,\xi) d\theta=\hspace{5cm}
 \]
 \begin{equation}
\frac{1}{\rho^2}\int\limits_{|x|=\rho} \mu_{m,R}(x,
k,\xi)d\sigma -\frac{i}{2k\rho^2}\int\limits_{|x|=\rho}
\mu'_{m,R}(x, k,\xi)dx+\frac{\xi}{2i} \int\limits_{|x|>\rho}
\frac{V_{m, R}\mu_{m,R}}{|x|^2}dx \label{form1}
\end{equation}
The last integral is equal to
\[
-\int \frac{ L_{m,R} \cdot \nabla \mu_{m,R}}{|x|^2}dx+2\int
\frac{2\mu_{m,R} }{|x|^3}  \left[ L_{m,R}\cdot
\frac{x}{|x|}\right] dx
\]
and its absolute value is not greater than
\[
C(k)\int \frac{|L_{m,R}(x)|^2}{|x|^2+1} dx
\]
by Cauchy-Schwarz and (\ref{ineq1}), (\ref{ineq2}). Notice that
for fixed $k$ the last quantity can be made arbitrarily small
uniformly in $R$ by choosing $m$ large.

 Now, fix any $k_0\in
\mathbb{C}^+$. Take $f$ to be spherically symmetric nonnegative
and with unit norm. Then, by Lemma \ref{trunc}
\[
\mu_{m,R}(x,k,\xi)\to \mu^0(x,k)=\int
\frac{|x|e^{-ik(|x|-|x-y|)}}{|x-y|}f(y)dy, \, m\to\infty
\]
for fixed $k\in \mathbb{C}^+$ uniformly in $R>R_0$ and in $x\in K$
for any compact $K$. By Lemma~\ref{trunc} and the theorem on trace
of $H^2$ functions, we have
\[
\mu'_{m,R}\to (\mu^0)', m\to\infty
\]
in $L^2(S_r)$ on any fixed sphere $S_r=\{|x|=r\}$.

This $\mu^0$ is spherically symmetric since $f$ is spherically
symmetric. Moreover, for $|x|\to\infty$,
\[
\mu^0(x,k)\to A^0(k)=\int e^{-ik\langle \theta, y\rangle}
f(|y|)dy=\int\limits_0^\infty tf(t)\frac{\sin (kt)}{k}dt
\]
and
\[
(\mu^0)'(x,k)\to 0
\]
where $A^0(k)$ is  the amplitude of $f$ with respect to
unperturbed operator. This function is entire and therefore has
only finite number of zeroes in any compact in $\mathbb{C}$. For
any $k_0\in \mathbb{C}^+$ which is not zero, we can arrange first
$\rho$ and then $m$ such that the difference in the r.h.s. of
(\ref{form1}) has absolute value greater than some $2\delta$ and
the last term in the r.h.s. of (\ref{form1}) has absolute value
smaller than $\delta$ (all that uniformly in $R>R_0$ and $\xi\in
[-T_1, T_1]$, where $T_1$ is any fixed constant). We get
\[
\left| \frac{1}{\rho^2} \int\limits_{|x|=\rho} \mu_{m, R}(x,
k_0,\xi)d\sigma-\frac{i}{2k_0\rho^2}\int\limits_{|x|=\rho}
\mu'_{m, R}(x, k_0,\xi)d\sigma+\frac{\xi}{2i} \int
\frac{V_{m,R}\mu_{m, R}}{|x|^2}dx\right|>\delta>0
\]
for any $|\xi|<T_1$ and any $R>R_0$. Thus, for $k=k_0$, we have
\begin{equation}
\|J_{m,R}(\theta,k_0,\xi)\|\geq |\langle J_{m,R}(\theta,
k_0,\xi),1\rangle|>\delta \label{est-bel}
\end{equation}
Then, since we also have the estimate (\ref{always}) and
factorization (\ref{factor4}), the proof of absolute continuity
for the measure repeats the argument for Theorem \ref{theorem1}.
The logic here is that we first choose an interval for the
spectral parameter and for coupling constant, then take $f$ and
some $k_0$ which is inside the triangle and is not a zero of
$A^0(k)$. Then we find large $\rho$ and after that make a
truncation by $m$ so that  the uniform in $R$ estimates
(\ref{est-bel}) hold.

Now, we have two identities
\[
\sigma'_{m,R}(k^2,kt)=k\pi^{-1}\|J_{m,R}(k,\theta,t)\|_2^2
\]
and
\[
\Im (k\langle \psi_{m,R}(k),f\rangle)=\|kJ_{m,R}(k,\theta,t)\|_2^2
\]
The functions $k\langle \psi_{m,R}(k),f\rangle$ are Herglotz in
$\mathbb{C}^+$ having uniform in $m,R$ estimates. That yields the
uniform bound on variations, i.e.
\[
\int_J \sigma'_{m,R}(k^2,kt)dk<C(J,T_1)
\]
uniformly in $m,R,|t|<T_1$. Here $J$ is any interval in
$\mathbb{R}$.

Now the repetition of subharmonicity argument gives the uniform
bounds on the entropy

\[
\int\limits_{a^2}^{b^2}\int\limits_{-T}^{T} \ln^-
\sigma'_{m,R}(\lambda,t)d\lambda dt>C
\]

 The weak-star convergence of
$d\sigma_{m,R}(\lambda,t)$ to $d\sigma_m (\lambda,t)$ as
$R\to\infty$ is a simple corollary of Lemma \ref{approx}. It
allows to conclude that
\[
\int\limits_{a^2}^{b^2}\int\limits_{-T}^{T} \ln^-
\sigma'_m(\lambda,t)d\lambda dt>-\infty
\]
thus  $(a^2,b^2)\subseteq \sigma_{ac}(H_m(t))$ for generic $t\in
[-T,T]$. Recall that parameter $m$ corresponds to cutting $L$ off
inside the ball of radius $m$. As mentioned earlier, this
subscript $m$ can be dropped due to trace-class type argument.
\quad $\Box$

{\bf Remark.} This result suggests that the method of \cite{dd1}
can probably be pushed forward to prove Theorem \ref{theorem1} for
any coupling constant. Also, in the proof we have control over
$\langle J,1\rangle$ and that implies that nontrivial energy is
always present on low angular modes. We do not think that that is
the case when $V$ decays without substantial oscillation.

{\it Proof of Theorem \ref{theorem3}.}

By Weyl's Theorem on essential spectrum \cite{rs},
$\sigma_{ess}(H(t))=\mathbb{R}^+$. Consider $V_{m,R}(x)=V(x) \cdot
\chi_{m<|x|<R}, \, 1<m<R$ and assume that $f$ is spherically
symmetric, has support within the unit ball and has the unit norm.
Multiply (\ref{spherc}) from the right by $r\mu'_R$ and integrate
from $\rho$ to infinity. Similarly to Lemma \ref{lemma-l2}, we
have

\[
\rho\|\mu_{m,R}'(\rho)\|^2+\int\limits_\rho^\infty
\|\mu_{m,R}'(r)\|^2dr+4\Im k\int\limits_\rho^\infty
r\|\mu_{m,R}'(r)\|^2dr+\int_{|x|>\rho}\frac{|\nabla_\tau
\mu_{m,R}(x)|^2}{|x|^2}dx\leq
\]
\[
\frac{1}{\rho}\int\limits_{|x|=\rho} |\nabla_\tau \mu_{m,R}(x)|^2
d\sigma+2\xi \left|k \int\limits_{|x|>\rho} \frac{V_{m,R}\mu_{m,R}
\bar{\mu}_{m,R}'}{|x|}dx \right|
\]
The last integral is bounded by
\[
C(k)\left[\int\limits_\rho^\infty
rv_m^2(r)dr\right]^{1/2}\left[\int\limits_\rho^\infty
r\|\mu_{m,R}'(r)\|^2dr\right]^{1/2}
\]
due to (\ref{ineq1}) and (\ref{ineq3}). Using inequality $2ab\leq
\epsilon a^2+\epsilon^{-1}b^2$ for the last product, we get
\[
\int_{|x|>\rho}\frac{|\nabla_\tau \mu_{m,R}(x)|^2}{|x|^2}dx+2\Im
k\int\limits_\rho^\infty r\|\mu_{m,R}'(r)\|^2dr<
\]
\[
<C(k)\left[ \int\limits_\rho^\infty rv^2_m(r)dr
+\frac{1}{\rho}\int\limits_{|x|=\rho} |\nabla_\tau \mu_{m,R}(x)|^2
d\sigma\right]
\]
Notice that we infact show the weighted $L^2$ estimate for the
full gradient
\begin{equation}
\int_{|x|>\rho}\frac{|\nabla
\mu_{m,R}(x)|^2}{|x|^2}dx<C(k,\xi)\left[ \int\limits_\rho^\infty
rv^2_m(r)dr +\frac{1}{\rho}\int\limits_{|x|=\rho} |\nabla_{\tau}
\mu_{m,R}(x)|^2 d\sigma\right] \label{gradient}
\end{equation}

Now fix any positive interval for the spectral parameter and let
the coupling constant $\xi\in [-T_1,T_1]$.

 In (\ref{universal}), let $L\to\infty$. If $l=\rho>1$, the
 integral with $f$ will be disappear and
\[
\|J_{m,R}(k,\theta,\xi)\|_2^2 =\hspace{5cm}
\]
\begin{equation}
=\frac{1}{\rho^2} \int\limits_{|x|=\rho}
|\mu_{m,R}(x,k,\xi)|^2d\sigma -\frac{\Im
k}{|k|^2}\int\limits_{\rho<|x|} \frac{|\nabla
\mu_{m,R}|^2}{|x|^2}dx +\Re\left[
\frac{1}{ik}\int\limits_{|x|=\rho} \frac{\mu_{m,R}'(x)
\bar\mu_{m,R} (x)}{|x|^2}d\sigma \right]
\end{equation}
Now, as $m\to \infty$, the first term in the r.h.s. approaches

\begin{equation}
\frac{1}{\rho^2} \int\limits_{|x|=\rho} |\mu^0(x,k,\xi)|^2d\sigma
\label{expression}
\end{equation}
 uniformly in $R>m$ and in $k\in D$ where $D$ is any domain in $\mathbb{C}^+$.
Now, if $\rho\to\infty$, (\ref{expression}) will converge to
$|A^0(k)|^2$. We take any $k=k_0\in \mathbb{C}^+$ which is not a
zero of $A^0(k)$. If $k_0$ is fixed,

\[
\frac{1}{\rho^2} \int\limits_{|x|=\rho}
|\mu_{m,R}(x,k_0,\xi)|^2d\sigma>2\delta
\]
if we first choose $\rho$ and then $m$ to be large. This
inequality holds uniformly in $R$.

Consider
\[
-\frac{\Im k_0}{|k_0|^2}\int\limits_{\rho<|x|} \frac{|\nabla
\mu_{m,R}|^2}{|x|^2}dx +\Re\left[
\frac{1}{ik_0}\int\limits_{|x|=\rho} \frac{\mu_{m,R}'(x)
\bar\mu_{m,R} (x)}{|x|^2}d\sigma \right]
\]
The function $\mu^0$ is spherically symmetric and therefore
$\nabla_\tau \mu^0=0$. Thus, by (\ref{gradient}),
\[
\left|\frac{\Im k_0}{|k_0|^2}\int\limits_{\rho<|x|} \frac{|\nabla
\mu_{m,R}|^2}{|x|^2}dx\right|<\delta
\]
uniformly in $R>R_0$, if $k_0$ and $\rho$ are fixed and $m$ is
large. Moreover, we have
\[
\left|\Re\left[ \frac{1}{ik_0}\int\limits_{|x|=\rho}
\frac{\mu_{m,R}'(x) \bar\mu_{m,R} (x)}{|x|^2}d\sigma
\right]\right|<\delta
\]
uniformly in $R>R_0$ if we first choose $\rho$ and then $m$ to be
large. This is due to the fact that
$\partial\mu^0(x,k_0,\xi)/\partial r \to 0$ as $|x|\to \infty$.

Thus, after arranging  $\rho$ and  $m$, we finally have
\[
\|J_{m,R}(k_0,\theta,\xi)\|_2^2>\delta
\]
uniformly in $R>R_0$. This uniform in  $R$ estimate from below
allows us to repeat the arguments from the Theorem \ref{theorem1}
and finish the proof. $\Box$

{\bf Remark.} Notice that even in one-dimensional case, the
condition considered in the last Theorem leads to WKB correction
\[
\exp\left[ \frac{1}{2ik}\int\limits_0^r V(s)ds \right]
\]
in the asymptotics of the Green function. The main point in the
last proof is to show that the complete gradient of $\mu$ is small
(not only its radial component). If so, the general identity
(\ref{balance}) provides the bound from below for $\|J\|$. It is
important to mention that the usual $L^2$ decay of potential
guarantees that $\mu'$ is small (due to Lemma \ref{lemma-l2}) but
we can say nothing about the size of $\nabla_\tau\mu$.

Now, we further study the Schr\"odinger pencil. As we saw before,
the main equation to consider is (\ref{spherc}), which can be
rewritten as
\begin{equation}
\mu'=\kappa \frac{B}{r^2}\mu +\frac{\xi}{2i} V\mu+w_1+w_2, \quad
r>0
\end{equation}
with
\[
\kappa= -\frac{1}{2ik}, \quad w_1=-\frac{1}{2ik}\mu'', \quad
w_2=-\frac{re^{-ikr}}{2ik}f
\]
This is not quite an evolution equation on $L^2(\mathbb{R}^+,
L^2(\Sigma))$ because of the second derivative present, but we can
study the asymptotics of solution by writing Duhamel formula
\begin{equation}
\mu(r)=U(\rho, r,k)\mu(\rho)+\int_\rho^r U(s,r,k)w_1(s)ds
\label{duhamel}
\end{equation}
where $\rho>1$ and $f$ is supported on $[0,1]$ and
\begin{equation}
U'(\rho,r,k)=\kappa \frac{B}{r^2}U(\rho,r,k) +\frac{\xi}{2i}
VU(\rho,r,k),\, U(\rho,\rho,k)=I \label{evol1}
\end{equation}
Now, by considering $V_{(m)}=V\cdot \chi_{|x|>m}$ instead of $V$
and taking $f$ spherically symmetric, we can always make sure that
$\mu(\rho)$ in (\ref{duhamel}) is close to $\mu^0(\rho)$, a
constant function in angles, in the uniform norm.

Now, we know  that $\mu'$ has small $L^2$ norm provided $V$
satisfies conditions of Lemma \ref{lemma-l2}. Integration by parts
and rather simple estimates on $\partial_s U(s,r,k)$ allow one to
estimate the second term in (\ref{duhamel}).
Therefore, to show that $\|\mu(r)\|$ is bounded away from zero, we
need to concentrate mostly on the first term $U(\rho,
r,k)\mu(\rho)$. Notice that
\[
\Re\left[ \kappa \frac{B}{r^2}+\frac{\xi}{2i} V\right]=\frac{\Im
k}{2|k|^2} \cdot \frac{B}{r^2}
\]
Since $B$ is nonpositive, the norm  $\|U(\rho,r,k)\eta\|$
decreases in $r$. Moreover, it might be that hight oscillation of
$V$ kicks the Fourier spectrum of $u(r)=U(\rho,r,k)\eta$ to the
higher and higher modes where the energy is dissipated due to the
presence of $B$. In other words, we do not have any proof that
$\|u(r)\|$ does not go to zero even for $V$:
$|V(r,\theta)|<Cr^{-1+}$. Moreover, it might very well be that
$\|u(r)\|$ does go to zero for some choice of $V$ satisfying this
condition. Therefore, we have to use the following modification of
the Hamiltonian $H$ itself. As we know, the operator $H$ is
unitarily equivalent to the operator
\[
-\frac{d^2}{dr^2}-\frac{B}{r^2}+V(r)
\]
defined on $L^2(\mathbb{R}^+,L^2(\Sigma))$ with Dirichlet boundary
condition at zero. In the previous part of the paper, we
introduced the coupling constant in front of the potential. Now,
we consider a different family of operators. Let
$\lambda_m=-m(m+1), m=0,1,\ldots $ be distinct eigenvalues of $B$
and $Y_{l}^m$-- the corresponding spherical harmonics ($|l|\leq
m$). Let $\alpha\in (0,1)$ be some positive parameter to be chosen
later and $r_m$ be the points of intersection of the graph of
$\omega(r)=r^{\alpha}$ with levels $|\lambda_m|^{1/2},\,
m=0,1,\ldots$. On $r>1$, we introduce the function $s(r)$ which is
piecewise constant and equals to $|\lambda_m|^{1/2}$ on each
$I_m=[r_m,r_{m+1})$.

 We consider the function
$s_1(\omega,r)$ defined for $\omega\geq 0, r>1$ such that
$s_1(\omega,r)=0$ for $\omega> s(r)$ and $s_1(\omega,r)=1$ for
$\omega\leq s(r)$. The decomposition of unity on $r>1, \omega\geq
0$ is defined through $s_2(\omega,r)=1-s_1(\omega,r)$. For each
$r>1$, these $s_{1(2)}$ define the multipliers and the
corresponding operators

\[
M_{1(2)}(r)f=\sum\limits_{l,m} Y_l^m f_l^m
s_{1(2)}(|\lambda_m|^{1/2},r)
\]
where $f\in L^2(\Sigma)$ and $f_l^m$ are Fourier coefficients with
respect to spherical harmonics. The point here is that we want to
separate frequencies along the level $\omega\sim r^\alpha$ and
define
\[
B_{1(2)}(r)=B M_{1(2)}(r), \quad \widetilde
H(t)=-\frac{d^2}{dr^2}+t\left[-\frac{B_1(r)}{r^2}+V(r)\right]-\frac{B_2(r)}{r^2},
\quad t\in \mathbb{R}
\]
For $r\in [0,1)$, we let $\widetilde{H}(t)=H(t)$, this interval is
not important. Of course, $\widetilde H(1)=H(1)$. The operator
$\widetilde H(t)$ can be rewritten as
\[
\widetilde H(t)=H(t)+(1-t)\frac{B_1(r)}{r^2}
\]
Notice that $B_1(r)r^{-2}$ is bounded in the Hilbert space
$L^2([1,\infty), L^2(\Sigma))$. Therefore for self-adjoint
$\widetilde{H}(t)$, we have
$\cal{D}(\widetilde{H}(t))=H^2(\mathbb{R}^3)$ provided that $V\in
L^\infty(\mathbb{R}^3)$. Essentially, in this approach we treat
\[
\widetilde{V}(r)=-\frac{B_1(r)}{r^2}+V(r)
\]
as the perturbation of
\[
\widetilde{H}^0=-\frac{d^2}{dr^2}-\frac{B_2(r)}{r^2}
\]
The operator $\widetilde{H}^0$ can be easily decoupled into the
orthogonal sum of one-dimensional Schr\"odinger operators with
explicit potentials. It is an easy exercise then to check that the
spectrum of $\widetilde{H}^0$ is $[0,\infty)$ and is purely a.c.
One has to note thought that perturbation $\widetilde{V}$ is not a
multiplication by a function any longer.

 Now, we are
ready to formulate our result.
\begin{theorem}
 If $\alpha=2/3-$  and $|V(x)|<C\langle
x\rangle^{-\gamma},\,\gamma>3/2-\alpha $, then
$\sigma_{ac}(\widetilde{H}(t))=\mathbb{R}^+$ for generic~$t$.
\label{theorem-last}
\end{theorem}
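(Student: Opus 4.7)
The plan is to follow the scheme of Theorem \ref{theorem3}, with all of the new content concentrated in one step: establishing a uniform lower bound on the Jost amplitude of the associated pencil at an interior point $k_0\in\mathbb{C}^+$ of the auxiliary triangle. I would first set up the truncated pencil
\[
\widetilde P_R(k) = -\frac{d^2}{dr^2} - \frac{B_2(r)}{r^2} + k\xi\Bigl(-\frac{B_1(r)}{r^2} + V(x)\chi_{|x|<R}\Bigr) - k^2,
\]
which is still hyperbolic since $-B_2/r^2$ is non-negative, so Lemma \ref{trifle} and the bound (\ref{spec}) apply verbatim. The analogs of Lemma \ref{lemma-always} and Lemma \ref{approx} follow from the same integration by parts that produces (\ref{universal}); the extra contribution from $-B_2/r^2$ is manifestly non-negative and only strengthens the estimate. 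These supply the ``always true'' upper bound on $\|J_R\|$ and the factorization (\ref{factor4}), so the Herglotz total-variation bound, the subharmonicity / Green-function-mean-value mechanism on the triangle, the weak upper semicontinuity of the entropy, Lemma \ref{measure} and Fubini all transfer unchanged from the proof of Theorem \ref{theorem3}. The whole statement thus reduces to proving, for some fixed $k_0\in\mathbb{C}^+$ inside the triangle,
\begin{equation*}
\|J_R(\theta, k_0, \xi)\|_{L^2(\Sigma)} \geq \delta > 0 \qquad \text{uniformly in } R \text{ and } |\xi|\leq T_1.
\end{equation*}

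For this bound I would take $f$ spherically symmetric with $\|f\|=1$ supported in $|x|<1$, use Lemma \ref{trunc} on the piece of $V$ near the origin so that $\mu(\rho)$ (in the notation $\psi=\mu\,e^{ik|x|}/|x|$) is within $o(1)$ of the free radial profile $\mu^0(\rho)$, and choose $k_0$ away from the isolated zeroes of the entire function $A^0(k)$. Then $\|\mu(\rho)\|\geq 2\delta_0$ uniformly in $R$ for $\rho$ large. Writing the transport identity (\ref{duhamel}) with the modified propagator $U_R$ generated by $T(r)=\kappa B_2(r)/r^2 + (\xi/(2i))(-B_1(r)/r^2 + V_R(r))$, $\kappa=-1/(2ik)$, the proof splits into $(\mathrm{i})$ showing $\|U_R(\rho,r,k_0)\mu(\rho)\|\geq\tfrac{3}{2}\delta_0$ for all $r>\rho$, uniformly in $R$, and $(\mathrm{ii})$ showing the correction $\int_\rho^r U_R(s,r,k_0)w_1(s)\,ds$, with $w_1=-\mu''/(2ik_0)$, has norm smaller than $\delta_0/2$. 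Point $(\mathrm{ii})$ is routine: one integration by parts in $s$ reduces it to a boundary term and an integral of $\partial_s U_R\cdot\mu'$ over $[\rho,r]$, both controlled by the uniform weighted $L^2$ bounds on $\nabla\mu$ from (\ref{ineq1})--(\ref{ineq3}) and the contractive nature of $U_R$, and can be made arbitrarily small once $\rho$ is large.

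The real obstacle is point $(\mathrm{i})$. The energy identity
\[
\frac{d}{dr}\|u(r)\|^2 = \frac{\Im k_0}{|k_0|^2}\bigl\langle B_2(r)/r^2\,u(r), u(r)\bigr\rangle \leq 0
\]
shows that the only loss of $\ell^2$ mass happens through the high-frequency component $u_H=M_2(r)u$, and that the potential $V$ merely redistributes energy between modes. The quasi-stationary heuristic is: the source of $u_H$ from $u_L=M_1(r)u$ via the off-diagonal matrix elements of $V$ is bounded pointwise by $v(r)\|u_L\|$, while the damping rate inside the high-mode subspace is at least $s(r)^2/r^2 = r^{2\alpha-2}$; this yields $\|u_H(r)\|\lesssim v(r)\,r^{2-2\alpha}\|u_L(r)\|$ and a total dissipation
\[
\int_\rho^\infty \frac{s(r)^2}{r^2}\|u_H(r)\|^2\,dr \;\lesssim\; \int_\rho^\infty v(r)^2\,r^{2-2\alpha}\,dr \;\lesssim\; \rho^{\,3-2\alpha-2\gamma},
\]
which tends to $0$ as $\rho\to\infty$ precisely when $\gamma+\alpha>3/2$. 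Making this rigorous requires writing a Duhamel formula for $u_H$ against the diagonal-in-modes semigroup of $\kappa B_2/r^2$, estimating it by Gronwall (using $\alpha>1/2$ so that the localization scale $r^{2-2\alpha}$ is small compared to $r$), and controlling the commutators $[M_{1(2)}(r), d/dr]$ which are concentrated at the transitions $r_m\sim m^{1/\alpha}$ and are harmless thanks to the polynomial growth of the $r_m$. The exponent $\alpha=2/3-$ emerges as the optimal balance subject to these constraints, producing the admissible decay $\gamma>3/2-\alpha$. Combining $(\mathrm{i})$ and $(\mathrm{ii})$ yields $\|\mu(r)\|\geq\delta$ for all $r>\rho$, hence $\|J_R(\theta,k_0,\xi)\|\geq\delta$ uniformly in $R$ and $|\xi|\leq T_1$, and the conclusion $\sigma_{ac}(\widetilde H(t))=\mathbb{R}^+$ for generic $t$ follows by the endgame of Theorem \ref{theorem3}.
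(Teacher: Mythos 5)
Your overall architecture coincides with the paper's: everything is reduced to a uniform lower bound on $\|J\|$ at one interior point $k_0$, the function $\mu$ is split via Duhamel into the propagated piece $U(\rho,r)\mu(\rho)$ and a correction driven by $w_1=-\mu''/(2ik)$, and the propagated piece is controlled by the low/high angular mode decomposition, with unitarity on the range of $M_1$ and damping on the range of $M_2$, the transfer between the two blocks being governed only by the off-diagonal part of $V$. Your quasi-stationary bound $\|u_H\|\lesssim v(r)r^{2-2\alpha}\|u_L\|$ reproduces exactly the paper's iterated estimate $\|\beta_m\|\lesssim m^{\alpha^{-1}(2-\gamma)-2}$ on the intervals $I_m$, and the resulting constraint $\gamma+\alpha>3/2$ is the correct one coming from that half of the argument.

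The genuine gap is your point $(\mathrm{ii})$. The Duhamel correction is \emph{not} controlled by ``the contractive nature of $U_R$'': after the integration by parts you describe, the surviving term $\int_\rho^\infty \partial_s U(s,\infty)\mu'(s)\,ds$ must be estimated, by duality and Cauchy--Schwarz against (\ref{rad-der}), through
\[
\int\limits_t^\infty\bigl\|\partial_\rho U^*(\rho,\infty)\eta\bigr\|^2\,d\rho=\bar o(1)\,\|\eta\|^2 ,
\]
and a contraction semigroup carries no $L^2$ bound on its own $\rho$-derivative. This is precisely the second half of the paper's Lemma \ref{twist}, i.e.\ (\ref{bound-der}), proved via the energy identity (\ref{petty}), an integration by parts against $B_2(\rho)/\rho^2$, and the jump estimate (\ref{chain}) over the transition radii $r_m\sim m^{1/\alpha}$, where one needs $\sum_{m>t^\alpha}m^{2-2/\alpha}<\infty$; this is what forces $\alpha<2/3$. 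The constraints you actually derive ($\gamma>3/2-\alpha$ and $\alpha>1/2$) would push $\alpha$ toward $1$ and ``prove'' the theorem for $\gamma>1/2+$, which is not what is asserted: the exponent $2/3$ in the statement is dictated by the step you call routine, so your proposal as written cannot account for the hypotheses of the theorem. A secondary omission: you drop the mode damping $b$ (the paper's $B_{2,b}$), without which the approximating operators are not one-dimensional Schr\"odinger operators with bounded operator-valued potential, and the existence and continuity down to the real line of the Jost amplitude $J_R$ — needed for the factorization (\ref{another-fact}) and for the subharmonicity argument on the triangle — is left unjustified.
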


{\bf Remark.} Since the a.c. spectrum covers the positive
half-line for generic $t$, it is true for some $t$ accumulating to
$1$. That suggests (but does not prove) that the a.c. spectrum is
likely to be preserved for $t=1$ (i.e. for the original
Schr\"odinger operator), at least under the $5/6+$ assumption on
decay. In any case, this result is the first one when we are able
to go below $1$ in the decay assumption on the potential.

The proof follows the same lines. Consider truncations in space
\[
\widetilde{V}_R(r)= \widetilde{V}(r) \chi_{r<R}
\]
and damping of $B_2$ as
\[
\widetilde{H}_{R,b}=-\frac{d^2}{dr^2}+\frac{B_{2,b}(r)}{r^2}
+\widetilde{V}_R
\]
where $B_{2,b}(r)=B_b M_2(r)$,
\[
B_bf=\sum\limits_{l,m, |m|<b} Y_l^m \lambda_m f_l^m-b(b+1)
\sum\limits_{l,m, |m|\geq b} Y_l^m  f_l^m
\]
Here $b>R^\alpha$ and the damping is introduced to reduce the
problem to one-dimensional Schr\"odinger operator with bounded
operator-valued potential whose norm is in $L^1[1,\infty)$. For
these operators, we know absorption principle, absence of embedded
positive eigenvalues, etc. The point, though, is to prove estimate
on the entropy (e.g., (\ref{entropy1})) which is uniform in $b$
and $R$. Then, the following simple approximation result will do
the job.

\begin{lemma}
For any $f(r)\in L^2(\mathbb{R}^+,L^2(\Sigma))$ and any $z\in
\mathbb{C}^+$, we have
\[
\langle (\widetilde{H}_{R,b}-z)^{-1}f,f\rangle \to\langle
(\widetilde{H}-z)^{-1}f,f\rangle
\]
as $R\to\infty, b\to\infty$.\label{weak-c}
\end{lemma}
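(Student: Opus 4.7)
The plan is to use the second resolvent identity
\begin{equation*}
(\widetilde{H}_{R,b}-z)^{-1} - (\widetilde{H}-z)^{-1}
= (\widetilde{H}_{R,b}-z)^{-1}\bigl(\widetilde{H} - \widetilde{H}_{R,b}\bigr)(\widetilde{H}-z)^{-1},
\end{equation*}
together with the uniform bound $\|(\widetilde{H}_{R,b}-z)^{-1}\|\le(\Im z)^{-1}$, which follows from self-adjointness of $\widetilde{H}_{R,b}$ on $H^2(\mathbb{R}^3)$. Strong resolvent convergence thus reduces to showing $\bigl(\widetilde{H}-\widetilde{H}_{R,b}\bigr)g\to 0$ in $L^2$ as $R,b\to\infty$ for $g$ ranging over a dense subset of $\cal{D}(\widetilde{H})$. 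Once that is established, the familiar $\varepsilon/3$ argument, powered by the above uniform resolvent bound, upgrades the conclusion to convergence of matrix elements for every $f\in L^2(\mathbb{R}^+,L^2(\Sigma))$, which is exactly the statement of the Lemma.

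A convenient dense class is $g\in C_c^\infty(\mathbb{R}^+\times\Sigma)$ with support bounded away from $r=0$; the corresponding family $f=(\widetilde{H}-z)g$ is dense in $L^2$, because any $h$ orthogonal to this image would weakly satisfy $(\widetilde{H}-\bar z)h=0$ and hence vanish, since $\bar z\notin\sigma(\widetilde{H})$. The difference operator splits as
\begin{equation*}
\widetilde{H}-\widetilde{H}_{R,b}
=\bigl(\widetilde{V}(r)-\widetilde{V}_R(r)\bigr)\;\pm\;\frac{B_{2,b}(r)-B_2(r)}{r^2},
\end{equation*}
where the first summand is supported in $\{r>R\}$, and the second summand is diagonal in the spherical-harmonic basis $Y_l^m$ and supported on modes with $|m|\ge b$ lying in the range of the projection $M_2(r)$.

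For $R$ larger than the radial support of $g$, the potential piece annihilates $g$ outright. For the angular piece, expand $g=\sum_{l,m} g_l^m(r)Y_l^m$; smoothness of $g$ on a compact subset of $\mathbb{R}^+\times\Sigma$ gives decay of $|g_l^m(r)|$ faster than any power of $m$, uniformly in $r$ on $\mathrm{supp}(g)$. Since the eigenvalues of $(B_{2,b}-B_2)/r^2$ are bounded in absolute value by $m(m+1)/r^2$ and $r^{-2}$ is bounded on $\mathrm{supp}(g)$, a crude estimate yields
\begin{equation*}
\Bigl\|\tfrac{B_{2,b}-B_2}{r^2}\,g\Bigr\|_{L^2}^2
\le C(g)\sum_{|m|\ge b}\sum_{|l|\le m}\int |g_l^m(r)|^2\,dr\;\xrightarrow[b\to\infty]{}\;0.
\end{equation*}
The only point requiring mild care is bookkeeping of sign conventions in the definition of $\widetilde{H}_{R,b}$ and verifying the compositional structure of the truncations $M_2(r)$ and $B_b$; this is routine from the definitions. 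I do not anticipate any genuinely difficult step, since the whole argument rests on the elementary observation that an unbounded operator whose only non-trivial action is on high-frequency angular modes will be small on any fixed smooth test function once the frequency cutoff $b$ is taken large enough.
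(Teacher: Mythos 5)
Your overall strategy is the same as the paper's: second resolvent identity, the uniform bound $\|(\widetilde{H}_{R,b}-z)^{-1}\|\le(\Im z)^{-1}$, and the observation that the difference $\widetilde{H}-\widetilde{H}_{R,b}$ applied to a fixed good vector tends to zero. The one step that does not survive scrutiny is your choice of dense class. In dimension $3$ the Laplacian restricted to smooth functions whose support avoids a point is \emph{not} essentially self-adjoint (deficiency indices $(1,1)$; equivalently, a point has positive $H^2$-capacity in $\mathbb{R}^3$, and point evaluation is a bounded functional on $H^2(\mathbb{R}^3)$ that vanishes on your class). Since $\widetilde{H}$ differs from $-\Delta$ by a bounded operator, the same is true for $\widetilde{H}$: the set $\{(\widetilde{H}-z)g:\ g\in C_c^\infty,\ \mathrm{supp}\,g \text{ away from } r=0\}$ is not dense in $L^2$; its orthogonal complement contains a nonzero vector $h$ which satisfies $(\widetilde{H}-\bar z)h=0$ only on the punctured space (for the free operator this is essentially $e^{-\sqrt{-\bar z}\,|x|}/|x|$, which is in $L^2$ but not in the operator domain). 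So your duality argument "orthogonal $\Rightarrow$ weak eigenfunction $\Rightarrow$ zero" breaks precisely at the excluded point, and the $\varepsilon/3$ upgrade then only yields convergence on a closed subspace of codimension one.

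The repair is minor, and either of two routes works. (i) Do not excise the origin: $C_c^\infty(\mathbb{R}^3)$ \emph{is} a core for $\widetilde{H}$ (bounded perturbation of $-\Delta$), and your angular estimate is unaffected because the operator $(B_{2,b}-B_2)/r^2$ is supported on $r>1$ anyway, so no singularity at $r=0$ ever enters. (ii) Follow the paper and skip density altogether: apply the difference operator directly to $g=(\widetilde{H}-z)^{-1}f\in H^2(\mathbb{R}^3)$, noting that $\frac{B}{r^2}g\in L^2(\mathbb{R}^+,L^2(\Sigma))$ for any $g\in H^2(\mathbb{R}^3)$ (Hardy's inequality applied to $\nabla g$), whence $\left\|\frac{B_{2,b}-B_2}{r^2}\,g\right\|\to 0$ by dominated convergence over the angular modes and $\|(\widetilde{V}-\widetilde{V}_R)g\|\to 0$ trivially; the paper then pairs this against the uniformly bounded $R_{R,b}^*(z)f$ inside the resolvent identity. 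Route (ii) is slightly more economical since it never needs a core or smoothness of the test vector, only membership in $H^2$.
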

\begin{proof}
The second resolvent identity yields
\[\langle R_{R,b}(z)f,f\rangle=\langle
R(z)f,f\rangle-\langle
\left(\frac{B_{2,b}(r)-B_2(r)}{r^2}\right)R(z)f+(\widetilde{V}_R-\widetilde{V})R(z)f,R^*_{R,b}(z)f\rangle
\]
Since
\[
\left\|\frac{B_{2,b}(r)-B_2(r)}{r^2} g\right\|\to 0,\,
\left\|(\widetilde{V}_R-\widetilde{V})g \right\|\to 0\quad
R\to\infty, b\to\infty
\]
for fixed $g\in \cal{D}(\widetilde{H})=H^2(\mathbb{R}^3)$, we have
the statement of a Lemma.
\end{proof}
This  Lemma yields the weak--star convergence of the spectral
measures $d\sigma_{R,b}(\lambda)$ to $d\sigma(\lambda)$, where the
spectral measures are calculated for fixed $f$.

For $\widetilde{H}_{R,b}$, the analog of Lemma \ref{lemmanew} (and
Lemma \ref{lemma-factor}) holds true.
\begin{lemma}
For any $f(r)\in L^2(\mathbb{R}^+,L^2(\Sigma))$ with compact
support, we have
\[
\left[(\widetilde{H}_{R,b}-k^2-i(+0))^{-1}f\right](r)\sim
\exp(ikr)A_{R,b}
\]
as $r\to\infty$. Moreover, for the spectral measure of $f$, we
have
\[
\sigma'_{R,b}(k^2)=k\pi^{-1}\|A_{R,b}(k)\|^2, k>0
\]
\end{lemma}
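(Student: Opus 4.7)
The plan is to treat $\widetilde{H}_{R,b}$ as a one-dimensional Schr\"odinger operator on $L^2(\mathbb{R}^+,L^2(\Sigma))$ with Dirichlet condition at zero and operator-valued potential
\[
W(r)=\frac{B_{2,b}(r)}{r^2}+\widetilde{V}_R(r),
\]
and then to copy the argument of Lemma \ref{lemmanew} verbatim in the operator-valued setting. The first observation is that $W(r)$ is a bounded self-adjoint operator on $L^2(\Sigma)$ for each $r$ (since $\|B_b\|\le b(b+1)$ and $V\in L^\infty$), and $\|W(r)\|\in L^1([1,\infty))$: the term $\widetilde{V}_R$ is supported on $[0,R]$, and $\|B_{2,b}(r)/r^2\|\le b(b+1)/r^2$ is integrable at infinity. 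This integrability is what allows the standard Jost theory to go through.

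Next, I would construct the operator-valued Jost solution $\mathcal J(r,k)$ on $(0,\infty)$ as the unique $B(L^2(\Sigma))$-valued solution to $-\mathcal J''+W\mathcal J=k^2\mathcal J$ satisfying $\mathcal J(r,k)\to e^{ikr}I$ as $r\to\infty$. For $k>0$ (real), $\mathcal J$ is produced by Volterra iteration on
\[
\mathcal J(r,k)=e^{ikr}I-\int_r^\infty\frac{\sin k(r-s)}{k}W(s)\mathcal J(s,k)\,ds,
\]
the series converging absolutely in operator norm by the $L^1$-bound on $\|W\|$. Let $\alpha(r,k)$ be the regular operator-valued solution with $\alpha(0,k)=0,\alpha'(0,k)=I$. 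The standard Green-function formula, exactly as in Lemma \ref{green}, represents $u(r,k)=(\widetilde{H}_{R,b}-k^2-i0)^{-1}f$ as a combination of $\mathcal J(\cdot,k)$ and $\alpha(\cdot,k)$ against $f$. Since $f$ is compactly supported, for $r$ beyond the support of $f$ only the $\mathcal J$-branch survives, giving $u(r,k)=\mathcal J(r,k)\,\xi(k)$ for some vector $\xi(k)\in L^2(\Sigma)$, and therefore
\[
u(r,k)\sim e^{ikr}A_{R,b}(k),\qquad A_{R,b}(k)=\xi(k)\in L^2(\Sigma),
\]
with an analogous asymptotic for $u'(r,k)\sim ik\,e^{ikr}A_{R,b}(k)$.

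For the spectral-measure identity I would repeat the computation of Lemma \ref{lemmanew}. By the Spectral Theorem,
\[
\sigma'_{R,b}(k^2)=\pi^{-1}\Im\int_0^\infty\langle u(r,k),f(r)\rangle_{L^2(\Sigma)}\,dr.
\]
Starting from $-u''+Wu=k^2u+f$, take the inner product in $L^2(\Sigma)$ with $u(r,k)$, integrate over $r\in(0,L)$, and take imaginary parts. Self-adjointness of $W(r)$ makes the potential contribution real, the Dirichlet condition kills the endpoint at $0$, and the endpoint at $L$ is controlled using the Jost asymptotics of $u$ and $u'$ above; sending $L\to\infty$ one obtains
\[
\Im\int_0^\infty\langle u,f\rangle\,dr=k\,\|A_{R,b}(k)\|_{L^2(\Sigma)}^2,
\]
which gives the claimed formula.

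The only mildly delicate step is the last one: because $u$ only oscillates (rather than decays) at infinity, one must pass to the limit $L\to\infty$ using the precise Jost asymptotics for both $u$ and $u'$, rather than simply dropping the boundary term. The existence and continuity of $\mathcal J(r,k)$ in $k$ down to the real axis (needed so that $A_{R,b}(k)$ is the actual boundary value of the resolvent) is the main obstacle; however, it is a routine consequence of the Volterra construction combined with the fact that $\|W\|\in L^1$ at infinity, so no new ideas beyond those already in Lemmas \ref{lemmanew}--\ref{lemma3} and the reasoning around the pencil $D(r,k,\xi)$ are required.
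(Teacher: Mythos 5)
Your proof is correct and is essentially the argument the paper intends: the lemma is stated there without proof as ``the analog of Lemma \ref{lemmanew}'', the whole point of the damping by $b$ being (as the paper notes when introducing it) that $B_{2,b}(r)/r^2+\widetilde{V}_R$ is a bounded operator-valued potential with norm in $L^1[1,\infty)$, so the operator-valued Jost solution and the imaginary-part/Wronskian computation of Lemma \ref{lemmanew} carry over verbatim. Your attention to the boundary term at $r=L$ and to the continuity of the Jost solution down to the real axis is precisely the routine part the paper suppresses.
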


Just like in the previous sections, we can not say anything about
the asymptotics of the Green function for $\widetilde{H}$.
Therefore, we introduce the coupling constant against
$\widetilde{V}$ and consider
 the associated quadratic pencils
\[
\widetilde{P}(k,\xi)=\widetilde{H}^0+k\xi\widetilde{V}-k^2,\,\widetilde{P}_{R,b}(k,\xi)=\widetilde{H}^0_b+k\xi\widetilde{V}_R-k^2
\]
They are also hyperbolic and we have Lemma \ref{trifle}. For any
compactly supported $f(r)\in L^2(\mathbb{R}^+,L^2(\Sigma))$, we
introduce $\psi_{R,b}=\widetilde{P}_{R,b}^{-1}(k)f$,
$\mu_{R,b}=\exp(-ikr)\psi_{R,b}$,
$J_{R,b}(k,\xi)=\lim_{r\to\infty} \mu_{R,b}(r,k, \xi)$. We have
\begin{equation}
\sigma'_{R,b}(k^2,kt)=k\pi^{-1}\|J_{R,b}(k,t)\|^2
\label{another-fact}
\end{equation}

and the following analog of Lemma~\ref{lemma-always}.

\begin{lemma}
For any compactly supported $f\in L^2(\mathbb{R}^+,L^2(\Sigma))$
and $k\in \mathbb{C}^+$, we have
\begin{equation}
\|J_{R,b}(k,\xi)\|_{L^2(\Sigma)}  \leq \left[ \sqrt{|k|} \Im
k\right]^{-1}\left[\|f(r)\|_2\|f(r) e^{2\Im k|r|}\|_2\right]^{1/2}
\label{always1}
\end{equation}
uniformly in $R>R_0, b>1$.
\end{lemma}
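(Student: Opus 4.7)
The plan is a direct adaptation of Lemma \ref{lemma-always} to the reduced one-dimensional setting on $L^2(\mathbb{R}^+,L^2(\Sigma))$. Starting from $\widetilde{P}_{R,b}(k,\xi)\psi_{R,b}=f$ and substituting $\psi_{R,b}=e^{ikr}\mu_{R,b}$, one obtains
\[
-\mu_{R,b}''-2ik\mu_{R,b}'-\frac{B_{2,b}(r)}{r^2}\mu_{R,b}+k\xi\widetilde{V}_R\mu_{R,b}=fe^{-ikr},
\]
the analog of the equation for $\mu$ in the proof of Lemma \ref{lemma-always} (no factor of $r$ appears on the right-hand side since $\psi_{R,b}$ already lives on $L^2(\mathbb{R}^+,L^2(\Sigma))$). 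I would divide through by $2ik$, take the inner product in $L^2(\Sigma)$ with $\mu_{R,b}(r)$ pointwise in $r$, integrate from $l$ to $L$, and take the real part. Since $\widetilde{V}_R=-B_1(r)/r^2+V$ is self-adjoint on $L^2(\Sigma)$ and the coefficient $\xi/(2i)$ is purely imaginary, the potential contribution drops out of the real part.

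The crucial sign observation is that $-B_{2,b}(r)\geq 0$ as an operator on $L^2(\Sigma)$: indeed, $B\leq 0$, $B_b$ replaces each eigenvalue $\lambda_m$ by $\max(\lambda_m,-b(b+1))\leq 0$, and the spectral multiplier $M_2(r)$ commutes with $B_b$. Thus the $B_{2,b}$ term contributes the nonnegative quantity $(\Im k/(2|k|^2))\int \langle -B_{2,b}\mu_{R,b},\mu_{R,b}\rangle/r^2\,dr$ to the left-hand side, playing exactly the role of the tangential gradient in (\ref{universal}). Passing to the limits $l\to 0$ (using Dirichlet at the origin) and $L\to\infty$ (using the Jost asymptotic stated just before this lemma, whereby $\mu_{R,b}\to J_{R,b}$ and $\mu_{R,b}'\to 0$ in $L^2(\Sigma)$), all boundary contributions collapse and a short source-term computation produces the reduced analog of (\ref{balance}):
\[
\|J_{R,b}\|_{L^2(\Sigma)}^2+\frac{\Im k}{|k|^2}\left[\int_0^\infty\|\mu_{R,b}'\|^2\,dr+\int_0^\infty\frac{\langle -B_{2,b}\mu_{R,b},\mu_{R,b}\rangle}{r^2}\,dr\right]=\frac{1}{|k|^2}\Im\left[k\langle\psi_{R,b},fe^{2\Im k\,r}\rangle\right].
\]

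From here, both bracketed terms are nonnegative and I would discard them. Cauchy--Schwarz, $|\Im z|\leq|z|$, and Lemma \ref{trifle} applied to the hyperbolic pencil $\widetilde{P}_{R,b}$ then give
\[
\|J_{R,b}\|_{L^2(\Sigma)}^2\leq\frac{1}{|k|}\|\psi_{R,b}\|_2\|fe^{2\Im k\,r}\|_2\leq\frac{1}{|k|(\Im k)^2}\|f\|_2\|fe^{2\Im k\,r}\|_2,
\]
and (\ref{always1}) follows upon taking square roots. Uniformity in $R>R_0$ and $b>1$ is automatic, because the resolvent bound in Lemma \ref{trifle} has a constant depending only on $\Im k$. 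The main step that will actually require care is justifying the $L\to\infty$ behavior of the boundary terms; this reduces to standard Jost-function theory for operator-valued potentials and is legitimate because $B_{2,b}/r^2$ is a bounded operator on $L^2(\Sigma)$ (the damping to $-b(b+1)$ renders $B_b$ bounded) and $\widetilde{V}_R$ has compact support in $r$.
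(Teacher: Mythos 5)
Your argument is correct and is exactly the adaptation of the proof of Lemma \ref{lemma-always} that the paper intends (the paper states this lemma without proof, as "the analog" of that result): the energy identity obtained by dividing by $2ik$, pairing with $\mu_{R,b}$, and taking real parts, with the self-adjoint $\widetilde{V}_R$ dropping out, the nonpositive $B_{2,b}$ contributing a discardable nonnegative term in place of the tangential gradient, and Lemma \ref{trifle} for the hyperbolic pencil $\widetilde{P}_{R,b}$ supplying the $R$- and $b$-uniform bound. No changes needed.
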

The estimate on the derivative of $\mu$ can be obtained in the
same way.

\begin{lemma}
For any $k\in \mathbb{C}^+$, we have
\begin{equation}
\int\limits_0^\infty \|\mu'_{R,b}(r,k)\|^2dr<C(k) \label{rad-der}
\end{equation}
\end{lemma}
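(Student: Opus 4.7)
The plan is to derive for the pencil $\widetilde{P}_{R,b}(k,\xi)$ an identity playing the role of equation~(\ref{balance}) of Section~3, from which the bound follows by dropping nonnegative terms. Let $\psi_{R,b}=\widetilde{P}_{R,b}^{-1}(k,\xi)f$, viewed as an $L^2(\Sigma)$-valued function of $r>0$ satisfying the Dirichlet condition at the origin, and set $\mu_{R,b}(r)=e^{-ikr}\psi_{R,b}(r)$. Then $\widetilde{P}_{R,b}\psi_{R,b}=f$ becomes
\[
-\mu_{R,b}''-2ik\mu_{R,b}'-\frac{B_{2,b}(r)}{r^2}\mu_{R,b}+k\xi\widetilde{V}_R\mu_{R,b}=e^{-ikr}f,
\]
and by the construction of $J_{R,b}$ one has $\mu_{R,b}(r)\to J_{R,b}(k,\xi)$ and $\mu_{R,b}'(r)\to 0$ as $r\to\infty$.

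Next, divide the equation by $2ik$, pair it with $\mu_{R,b}$ in the $L^2(\Sigma)$ inner product, and integrate over $r>0$; then take the real part. Integration by parts in $r$, using $\mu_{R,b}(0)=0$ and the decay at infinity, converts $-\int_0^\infty\langle\mu_{R,b}'',\mu_{R,b}\rangle\,dr$ into $\int_0^\infty\|\mu_{R,b}'\|^2\,dr$ and $\Re\int_0^\infty\langle\mu_{R,b}',\mu_{R,b}\rangle\,dr$ into $\tfrac{1}{2}\|J_{R,b}\|^2$. Because $B_{2,b}\le 0$, the $-B_{2,b}/r^2$ term contributes the manifestly nonnegative quadratic form $\int_0^\infty\langle-B_{2,b}\mu_{R,b},\mu_{R,b}\rangle/r^2\,dr$; the coupling $k\xi\widetilde{V}_R$, after the division, carries a factor $1/(2i)$, and its real part vanishes since $\widetilde{V}_R$ is self-adjoint so $\langle\widetilde{V}_R\mu,\mu\rangle$ is real. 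Assembling the pieces gives the pencil analog of (\ref{balance}),
\[
\|J_{R,b}(k,\xi)\|^2+\frac{\Im k}{|k|^2}\int_0^\infty\|\mu_{R,b}'\|^2\,dr+\frac{\Im k}{|k|^2}\int_0^\infty\frac{\langle-B_{2,b}\mu_{R,b},\mu_{R,b}\rangle}{r^2}\,dr=\frac{\Im(k\overline{Y})}{|k|^2},
\]
where $Y=\int_0^\infty e^{2(\Im k) r}\langle f(r),\psi_{R,b}(r)\rangle\,dr$; every term on the left is nonnegative.

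Because $f$ has compact $r$-support in some $[0,\rho_0]$, Cauchy--Schwarz combined with the uniform bound $\|\psi_{R,b}\|_2\le(\Im k)^{-2}\|f\|_2$ (Lemma~\ref{trifle}, applicable since $\widetilde{P}_{R,b}$ is hyperbolic) gives $|Y|\le e^{2(\Im k)\rho_0}\|f\|_2^2/(\Im k)^2$, independently of $R,b$ and of $\xi$ on any bounded set. Hence the right-hand side of the identity is at most $C(k,f)$, and dropping the other two nonnegative terms on the left yields $\int_0^\infty\|\mu_{R,b}'\|^2\,dr\le|k|^2(\Im k)^{-1}C(k,f)$, which is the claim.

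The most delicate step is justifying the boundary terms at infinity, i.e.\ verifying $\mu_{R,b}'(r)\to 0$ (and not just that the limit $J_{R,b}$ exists), so that the integration-by-parts identities above are clean. For $r$ large enough that both $r>R$ and $r^\alpha>b$, the equation reduces to $-\psi''-k^2\psi=W(r)\psi$ with $W(r)=-b(b+1)(I-P_{<b})M_2(r)/r^2$ of size $O(r^{-2})$ and operator norm integrable at infinity; a standard Volterra-type Jost construction then supplies both the asymptotic $\psi_{R,b}(r)\sim e^{ikr}J_{R,b}$ and the matching asymptotic for $\psi_{R,b}'$, so that $\mu_{R,b}'(r)\to 0$. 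Once that analytic input is secured, the rest of the argument is a routine real-part computation modelled on the derivation of~(\ref{balance}).
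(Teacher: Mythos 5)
Your proof is correct and is exactly the argument the paper intends: the lemma is stated right after the remark that the derivative estimate "can be obtained in the same way" as (\ref{always1}), i.e.\ by the balance identity of type (\ref{balance}) — divide the equation for $\mu_{R,b}$ by $2ik$, pair with $\mu_{R,b}$, take real parts, observe that the $-B_{2,b}/r^2$ and $\|\mu'\|^2$ contributions are nonnegative, and bound the right-hand side via Cauchy--Schwarz and the hyperbolic-pencil resolvent estimate of Lemma \ref{trifle}. Your extra care with the Jost asymptotics $\mu'_{R,b}(r)\to 0$ (using that $\|B_{2,b}(r)\|/r^2\leq b(b+1)/r^2\in L^1$ near infinity) correctly justifies the boundary terms that the paper leaves implicit.
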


 What makes the situation different is the
behavior of evolution $\widetilde{U}(\rho,r)$

\begin{equation}
\widetilde{U}'(\rho,r,k)=\kappa
\frac{B_2(r)}{r^2}\widetilde{U}(\rho,r,k) +\frac{\xi}{2i}
\widetilde{V}\widetilde{U}(\rho,r,k),\,
\widetilde{U}(\rho,\rho,k)=I
\end{equation}
as $r\to\infty$. Recall that
$\widetilde{V}_{(d)}=\widetilde{V}\cdot \chi_{r>d}$. We have
\begin{lemma}
Fix $k\in \mathbb{C}^+$ and let $\alpha=2/3-$,
$\gamma>3/2-\alpha$, $|\xi|<T_1$. Assume that $|V(x)|\leq C\langle
x\rangle^\gamma$ and consider the evolution
\begin{equation}
\widetilde{U}'(\rho,r,k)=\kappa
\frac{B_2(r)}{r^2}\widetilde{U}(\rho,r,k) +\frac{\xi}{2i}
\widetilde{V}_{(d)}\widetilde{U}(\rho,r,k),\,
\widetilde{U}(\rho,\rho,k)=I
\end{equation}
where $d(k,V,T_1)$ is large enough. Then, we have
\begin{equation}
\liminf_{r\to\infty} \|\widetilde{U}(1,r,k)1
\|>\delta(k,\gamma,T_1)>0 \label{bound-below}
\end{equation}
Assume also that $\widetilde{V}$ has compact support in $[0,R]$.
Then, for each $\eta\in L^2(\Sigma)$,
\begin{equation}
\int\limits_{t}^{\infty} \|\partial_\rho
\widetilde{U}^*(\rho,\infty)\eta\| ^2d\rho= \bar{o}(1)\cdot
\|\eta\|^2 \label{bound-der}
\end{equation}
as $t\to\infty$ uniformly in $R$.

The same results hold true for the case when truncation by $R$ and
damping by $b$ are introduced. The resulting estimates are uniform
in $R>R_0, b>R^\alpha$.
 \label{twist}
\end{lemma}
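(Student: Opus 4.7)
My plan is to analyze $u(r):=\widetilde U(1,r,k)\mathbf 1$ by exploiting the algebraic fact that the constant function $\mathbf 1=Y_0^0$ lies in the kernel of $B$, hence of $B_2(r)$ for every $r\ge 1$; a fortiori $B_1(r)\mathbf 1=0$, and the evolution $U_0(s,r)$ generated by $\kappa B_2(r)/r^2$ alone fixes the constant function. Decompose $u=c(r)\mathbf 1+v(r)$ with $v\perp\mathbf 1$, $c(1)=1$, $v(1)=0$. Since $\widetilde V$ and $B_2$ are self-adjoint and $\Re\kappa=\Im k/(2|k|^2)>0$, pairing $u'=\kappa B_2/r^2\,u+(\xi/2i)\widetilde V u$ with $u$ and taking real parts yields the energy identity
\[
\|u(r)\|^2=\|\mathbf 1\|^2+\frac{\Im k}{|k|^2}\int_1^r\frac{\langle B_2(s)v(s),v(s)\rangle}{s^2}\,ds,
\]
so $\|u(r)\|$ is nonincreasing, and (\ref{bound-below}) reduces to keeping the accumulated high-mode energy $\int\|(-B_2(s))^{1/2}v(s)\|^2/s^2\,ds$ strictly less than $\|\mathbf 1\|^2$, uniformly in $R$ and $b$.

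Writing Duhamel for $v$ against $U_0$, the leading driving term is $c(s)V(s)\mathbf 1$ (because $B_1\mathbf 1=0$), so $-B_1/s^2$ enters only through the self-coupling $-(B_1/s^2)v$. Expanding in spherical harmonics, for $m\ge 1$,
\[
v_{l,m}(r)=\frac{\xi}{2i}\int_d^r e^{-\kappa m(m+1)h_m(s,r)}\bigl[c(s)V_{l,m}(s)+(\text{self-coupling})\bigr]\,ds,
\]
with $h_m(s,r)=\int_s^{\min(r,r_m)}t^{-2}\,dt$ when $s<r_m$ and $h_m=0$ otherwise, and $r_m\sim m^{1/\alpha}=m^{3/2+}$. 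The real part of the exponent provides damping $e^{-(\Im k/2|k|^2)m(m+1)h_m(s,r)}$, effective once $m^2\gtrsim s$. Summing $|v_{l,m}|^2$, using $\sum_{l,m}|V_{l,m}(s)|^2\le C\|V(s,\cdot)\|_{L^2(\Sigma)}^2\le Cs^{-2\gamma}$, and splitting the $m$-sum against the thresholds $s^{1/2}$ and $s^{\alpha}$, a careful balance of the radial integrals against the damping rate shows that $\|v(r)\|$ stays small on $[d,\infty)$ and the high-mode energy is $o(1)$ as $d\to\infty$, precisely when $\gamma>3/2-\alpha$. The self-coupling term is absorbed by a Gronwall bootstrap using $\|B_1(s)/s^2\|\lesssim s^{2\alpha-2}\in L^2[1,\infty)$. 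Uniformity in $R>R_0$ and $b>R^\alpha$ is automatic because the additional truncation and damping only affect modes already in the strongly damped regime of $B_2$.

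For (\ref{bound-der}), $\partial_\rho\widetilde U^*(\rho,r)\eta=-A(\rho)^*\widetilde U^*(\rho,r)\eta$ with $A(\rho)=\kappa B_2(\rho)/\rho^2+(\xi/2i)\widetilde V(\rho)$; setting $\zeta(\rho):=\widetilde U^*(\rho,\infty)\eta$, which is uniformly bounded by $\|\eta\|$ via the contractivity of $\widetilde U$, one estimates $\int_t^\infty\|A(\rho)^*\zeta(\rho)\|^2\,d\rho$ by splitting into the $\widetilde V$-part, directly controlled by $\int_t^\infty\|\widetilde V(\rho)\|^2\,d\rho\cdot\|\eta\|^2=o(\|\eta\|^2)$, and a $B_2/\rho^2$-part, handled by a reversed energy identity for $\zeta$ that parallels the first paragraph. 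The main obstacle is the mode-by-mode estimate of the second paragraph: its sharpness reflects the tight choice $\alpha=2/3-$ balancing the decay rate of $V$ against the smoothing from $B_2$, and the $L^\infty$ pointwise control of $V$ on each sphere (rather than a purely radial $L^2$ bound) is essential in separating the low- and high-mode contributions.
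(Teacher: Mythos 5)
Your reduction via the energy identity is sound: since $B_2(r)\mathbf 1=0$ and $\tfrac{\xi}{2i}\widetilde V$ is skew--adjoint, $\|u(r)\|^2=\|\mathbf 1\|^2+\tfrac{\Im k}{|k|^2}\int_1^r s^{-2}\langle B_2(s)v,v\rangle\,ds$, so (\ref{bound-below}) is indeed equivalent to keeping the dissipated high--mode energy below $\|\mathbf 1\|^2$; and your sketch for (\ref{bound-der}) (adjoint evolution, $\widetilde V$--part by $\|\widetilde V\|\in L^2$, $B_2$--part by a reversed energy identity with integration by parts) is essentially the paper's argument. The gap is in the central estimate for (\ref{bound-below}). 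First, the self--coupling cannot be ``absorbed by a Gronwall bootstrap'' from $\|B_1(s)/s^2\|\lesssim s^{2\alpha-2}\in L^2[1,\infty)$: Gronwall requires $L^1$, and $\int_1^\infty s^{2\alpha-2}\,ds=\infty$ for $\alpha<1$; likewise the $Vv$ self--coupling has $\|V(s)\|\lesssim s^{-\gamma}$ with $\gamma$ possibly $\le 1$, again not integrable. The correct resolution is structural rather than perturbative: $iB_1/s^2$ and $iV^{11}=iM_1VM_1$ are skew--adjoint and commute into the low--mode propagator, which is therefore exactly \emph{unitary}, while $iV^{22}$ joins $B_2/s^2$ in a contraction with quantified decay $\exp[-|\lambda_m|(r-\rho)/(r\rho)]$; only the off--diagonal couplings $V^{12},V^{21}$ remain, and they are controlled not by integrability but by pairing $\rho^{-\gamma}$ against that damping, interval by interval on $I_m=[r_m,r_{m+1})$. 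This is precisely the paper's recursion for $\alpha_m=u_1(r_m+0)$, $\beta_m=u_2(r_m+0)$, giving $\|\beta_m\|\lesssim m^{-\kappa_1-\kappa_2}$ and $\|\alpha_{m+1}\|\ge\|\alpha_m\|-cm^{-1-}$.

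Second, your intermediate claim that ``$\|v(r)\|$ stays small on $[d,\infty)$'' is not available (and is likely false) for $3/2-\alpha<\gamma\le 1$: the constant mode feeds the \emph{undamped} modes $1\le m\lesssim s^\alpha$ at rate $\lesssim s^{-\gamma}\notin L^1$, and the low--mode block is unitary, so nothing returns or dissipates that energy. This does not hurt the conclusion --- only the high--mode (dissipated) content matters --- but your bound on the high--mode content uses smallness of $v$ to close the self--coupling, so the bootstrap as written does not close. Finally, the decisive computation (``a careful balance of the radial integrals against the damping rate \ldots precisely when $\gamma>3/2-\alpha$'') is asserted rather than carried out; since the resulting per--interval loss is only $m^{-1-\epsilon}$, i.e.\ barely summable, this is exactly the step that cannot be waved through and is where the paper's estimates of $\zeta_m$ and $\eta_m$ do the work.
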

\begin{proof}
For simplicity, we take $ \xi=-2$ and $k=i/2$. Then, we have
\begin{equation}
u'=\frac{B_2(r)}{r^2}u +i \widetilde{V}_{(d)}u,\, u(1)=1
\end{equation}
Let us study this evolution. Obviously, $\|u\|$ decreases. We
split $u(r)=M_1(r)u+M_2(r)u=u_1(r)+u_2(r)$.  Operators
$M_{1(2)}=const$ on the intervals $I_m=[r_m, r_{m+1})$ and act as
orthoprojectors, also $r_m\sim m^{1/\alpha}$, $|I_m|\sim
m^{1/\alpha-1}$. Let us control the variation of $\|u_{1(2)}\|$ on
each of $I_m$. We have
\[
\frac{d}{dr} \left[
\begin{array}{c}
u_1(r)\\
 u_2(r)
 \end{array}
 \right]=\hspace{5cm}
 \]
\[
=\left[
\begin{array}{cc}
iB_1(r)r^{-2}+iV^{11}(r) & iV^{12}(r) \\
 iV^{21}(r)  & B_2(r)r^{-2}+iV^{22}(r)
 \end{array}
 \right] \left[
\begin{array}{c}
u_1(r)\\
 u_2(r)
 \end{array}
 \right]
\]
where
\[
V^{ij}=M_iV_{(d)} M_j, \quad\left[
\begin{array}{c}
u_1(r_m+0)\\
 u_2(r_m+0)
 \end{array}
 \right]=\left[
\begin{array}{c}
\alpha_m\\
 \beta_m
 \end{array}
 \right]
\]
Consider $U_{1(2)}$ acting on Ran$M_{1(2)}|_{I_m}$ and  defined as
follows
\[
U_1'(\rho,r)=\left[iB_1(r)r^{-2}+iV^{11}(r)\right]U_1(\rho,r),
U_1(\rho,\rho)=I
\]
\[
U_2'(\rho,r)=\left[B_2(r)r^{-2}+iV^{22}(r)\right]U_2(\rho,r),
U_2(\rho,\rho)=I
\]
where $r_m<\rho<r<r_{m+1}$. $U_1$ is unitary and $U_2$ is a
contraction satisfying
\[
\|U_2(\rho,r)\|\leq \exp\left[ -\lambda_m
\frac{r-\rho}{r\rho}\right],\, r_m<\rho<r<r_{m+1}
\]
on Ran$M_2$. The dynamics under this evolution is as follows:
$U_1$ doesn't change the norm of $u_1$, $U_2$ suppresses $u_2$,
and interaction between $u_1$ and $u_2$ is small due to decay of
$V^{12}$. This situation is standard in asymptotical analysis.

By Duhamel,
\[
u_1(r)=U_1(r_m,r)\alpha_m +i \int\limits_{r_m}^r
U_1(\rho,r)V^{12}(\rho)u_2(\rho)d\rho
\]

\[
u_2(r)=U_2(r_m,r)\beta_m +i \int\limits_{r_m}^r
U_2(\rho,r)V^{21}(\rho)u_1(\rho) d\rho
\]
When moving from $I_m$ to $I_{m+1}$ the dimension of Ran$M_{1(2)}$
increases/decreases by the geometric multiplicity of
$\lambda_{m+1}$. Therefore,
\[
\|u_2(r)\|\leq
\|\beta_m\|\exp\left[-\lambda_m\frac{r-r_m}{rr_m}\right]+c\int\limits_{r_m}^r
\rho^{-\gamma}\exp\left[-\lambda_m \frac{r-\rho}{\rho
r}\right]d\rho
\]
and
\[
\|\beta_{m+1}\|\leq
\exp(-Cm^{1-\alpha^{-1}})\|\beta_m\|+c\int\limits_{r_m}^{r_{m+1}}
\rho^{-\gamma}\exp\left[-\lambda_m \frac{r_{m+1}-\rho}{\rho
r_{m+1}}\right]d\rho<
\]
\[
\leq
\exp(-Cm^{1-\alpha^{-1}})\|\beta_m\|+cm^{\alpha^{-1}(1-\gamma)-1}
\]
Let $\kappa_1= 1-\alpha^{-1}$,
$\kappa_2=-\alpha^{-1}(1-\gamma)+1$. The simple iteration gives
\[
\|\beta_m\|<C\sum_{j=1}^m \exp\left[
-C(m^{\kappa_1+1}-j^{\kappa_1+1})\right]j^{-\kappa_2}<Cm^{-\kappa_1-\kappa_2}
\]

 For $\alpha_{m+1}$,
\[
\|\alpha_{m+1}\|\geq \|\alpha_m\|-c\|\beta_m\|
\int\limits_{r_m}^{r_{m+1}} \rho^{-\gamma}\exp\left[-\lambda_m
\frac{\rho-r_m}{\rho r_m}\right]d\rho
\]
\[
-c\int\limits_{r_m}^{r_{m+1}} \rho^{-\gamma}
\int\limits_{r_m}^\rho s^{-\gamma} \exp\left[-\lambda_m
\frac{\rho-s}{\rho s}\right] ds d\rho
\]

For
\[
\zeta_m=\int\limits_{r_m}^{r_{m+1}} \rho^{-\gamma}
\exp\left[-\lambda_m \frac{\rho-r_m}{\rho r_m}\right]d\rho, \quad
|\zeta_m|\leq Cm^{\alpha^{-1}(1-\gamma)-1}
\]
and
\[
\eta_m=\left|\int\limits_{r_m}^{r_{m+1}}\rho^{-\gamma}\int\limits_{r_m}^{\rho}
s^{-\gamma}\exp\left[-\lambda_m \frac{\rho-s}{s \rho}\right]ds
d\rho\right|<Cm^{2(1-\gamma)/\alpha-2}
\]
If $\alpha=2/3-$ and $\gamma>3/2-\alpha$, then

\[
\|\alpha_{m+1}\|\geq \|\alpha_m\|-cm^{-1-}
\]
Taking $d$ large and taking different $\gamma'\in
(3/2-\alpha,\gamma)$ in all estimates above, we can make sure that
the constant $c$ is small with respect to $\|\alpha_1\|=1$ and
therefore the iteration of the last inequality yields
(\ref{bound-below}).

To prove (\ref{bound-der}), we note that
$\varPsi(\rho,r)=\widetilde{U}^*(\rho,r)$ solves
\[
\partial_\rho \varPsi(\rho,r)=-\left[
\frac{B_2(\rho)}{\rho^2} -i
\widetilde{V}(\rho)\right]\varPsi(\rho,r),\quad \varPsi(r,r)=I,
\quad \rho<r
\]
 Since $\widetilde{V}$ is compactly supported, we can take
 $r\to\infty$ and consider
 $w(\rho)=\varPsi (\rho,\infty)\eta$.
\begin{equation}
w'=-\left[ \frac{B_2(\rho)}{\rho^2} -i
\widetilde{V}(\rho)\right]w,\quad w(\infty)=\eta \label{core}
\end{equation}
Multiply the both sides by $w$ take the real part and integrate.
We have
\begin{equation}
\|w(t)\|^2+2\int\limits_t^\infty \left| \frac{\langle
B_2(s)w,w\rangle}{s^2} \right|ds=\|\eta\|^2, \quad 0<t<\infty
\label{petty}
\end{equation}
Then, multiplication of (\ref{core}) by $w'$ and integration from
$t$ to $\infty$ yields
\begin{equation}
\int\limits_{t}^{\infty}
\|w'(s)\|^2ds=-\int\limits_{t}^{\infty}\frac{\langle
B_2(s)w,w'\rangle}{s^2}ds+i\int\limits_{t}^{\infty} \langle
\widetilde{V}(s)w,w'\rangle ds \label{syuzhet}
\end{equation}
Denote the first term by $I$. Then, integration by parts gives
\[
\Re I=\frac 12\left[ \frac{\langle
B_2(t)w,w\rangle}{t^2}-2\int\limits_{t}^{\infty} \frac{\langle
B_2(s)w,w\rangle}{s^3}ds+\int\limits_{t}^{\infty} \frac{\langle
B_2'(s)w,w\rangle}{s^2}ds\right]
\]
The first term is nonpositive. For the second one, (\ref{petty})
yields
\[
\left|\int\limits_{t}^{\infty} \frac{\langle
B_2(s)w,w\rangle}{s^3}ds\right|<t^{-1}\|\eta\|^2
\]
The third term can be bounded by
\begin{equation}
\sum\limits_{l,m, r_m\geq t}
|w_l^m(r_m)|^2\frac{|\lambda_m|^2}{r_m^2}\leq
C\|\eta\|^2\sum\limits_{m>t^\alpha} m^{2-2/\alpha}=\bar{o}(1)
\|\eta\|^2 \label{chain}
\end{equation}
since $r_m\sim m^{1/\alpha}$ and we also used (\ref{petty}) once
again to estimate the sum in $l$ that corresponds to eigenspace of
each $\lambda_m$ for different values of $r_m$.

The second term in (\ref{syuzhet}) can be estimated by
Cauchy-Schwarz since $\|\widetilde{V}\|\in L^2[1,\infty)$. Taking
the real part of (\ref{syuzhet}) yields
\[
\int\limits_{t}^{\infty}
\|w'(s)\|^2ds<C\|\eta\|^2\left[\bar{o}(1)+\int\limits_t^\infty
\|\widetilde{V}(s)\|^2 ds\right]
\]
This inequality yields (\ref{bound-der}).
\end{proof}

We will need the following statement later on. Recall that
$\widetilde{V}_{(m)}(r)=\widetilde{V}(r)\cdot \chi_{r>m}$.
\begin{lemma}
Let $f(r)\in L^2(\mathbb{R}^+,L^2(\Sigma))$  and $k\in
\mathbb{C}^+$. Introduce $\psi=[\widetilde{P}^0(k)]^{-1}f$ and
$\psi_{(m),b}=[\widetilde{P}_{(m),b}(k)]^{-1}f$. Then, for any
$\tau>0$,
\[
\|\psi_{(m),b}(\tau)-\psi(\tau)\|_{L^2(\Sigma)}\to 0, \,
\|\psi_{(m),b}'(\tau)-\psi'(\tau)\|_{L^2(\Sigma)}\to 0
\]\label{pet1}
\end{lemma}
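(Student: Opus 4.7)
The plan is a standard second-resolvent-identity argument, followed by a mode-decomposition to upgrade $L^2$-convergence to the stated trace-level convergence. Writing $w := \psi_{(m),b} - \psi$, from $\widetilde{P}_{(m),b}(k)\psi_{(m),b} = f = \widetilde{P}^0(k)\psi$ one gets
\[
\widetilde{P}_{(m),b}(k)\,w \;=\; \Bigl[\frac{B_{2,b}-B_2}{r^2} - k\xi\widetilde{V}_{(m)}\Bigr]\psi \;=:\; -g.
\]
Both pencils are hyperbolic, so Lemma \ref{trifle} gives $\|[\widetilde{P}_{(m),b}(k)]^{-1}\| \leq |\Im k|^{-2}$, reducing the task to showing $\|g\|_{L^2(\mathbb{R}^+, L^2(\Sigma))} \to 0$ as $m,b \to \infty$.

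The potential piece is easy: $\widetilde{V} = -B_1/r^2 + V$ is a uniformly bounded operator-valued multiplier (for $\alpha < 1$, $\|B_1(r)/r^2\| \leq r^{2\alpha - 2}$ is bounded on $r \geq 1$, and $V \in L^\infty$), so $\widetilde{V}_{(m)}\psi = \chi_{\{r>m\}}\widetilde{V}\psi \to 0$ in $L^2$ by dominated convergence. For the angular piece, $\psi \in \cal{D}(\widetilde{H}^0)$ forces $B_2\psi/r^2 \in L^2(\mathbb{R}^+, L^2(\Sigma))$, that is,
\[
\sum_{n,l}\int_{r_1}^{r_n}\frac{\lambda_n^2}{r^4}|\psi_l^n(r)|^2\,dr < \infty.
\]
The operator $(B_{2,b}-B_2)/r^2$ acts on mode $(n,l)$ in $\mathrm{Ran}\,M_2$ by multiplication by $[n(n+1)-b(b+1)]/r^2$ when $n \geq b$, and by zero otherwise; since $|n(n+1)-b(b+1)| \leq 2|\lambda_n|$ for $n \geq b$, its $L^2$-norm squared on $\psi$ is bounded by the tail $4\sum_{n\geq b,l}\int_{r_1}^{r_n}\lambda_n^2/r^4|\psi_l^n|^2\,dr$, which vanishes as $b \to \infty$. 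Hence $\|w\|_{L^2} \to 0$.

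To promote $L^2$-convergence of $w$ to pointwise $r = \tau$ convergence in $L^2(\Sigma)$ for both $w$ and $w'$, decompose $w = w_{\leq N} + w_{>N}$ by spherical-harmonic degree. For $b > N$ the operator $-B_{2,b}/r^2$ agrees with $-B_2/r^2$ on modes $n \leq N$ and is bounded there by $N(N+1)/r^2$; the ODE governing $w_{\leq N}$ then has uniformly bounded operator-valued coefficients on any compact $I \ni \tau$, and since both $w_{\leq N}$ and the forcing $-g_{\leq N}$ tend to zero in $L^2$, one-dimensional interior regularity yields $\|w_{\leq N}\|_{H^2(I,L^2(\Sigma))} \to 0$, whence $w_{\leq N}(\tau),w_{\leq N}'(\tau) \to 0$ in $L^2(\Sigma)$ by the embedding $H^2(I,L^2(\Sigma)) \hookrightarrow C^1(I,L^2(\Sigma))$. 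For the high-mode tail, one uses the uniform bound $\|\widetilde{H}^0_b w\|_{L^2} \leq \|g\|_{L^2} + |k|^2\|w\|_{L^2} + |k\xi|\|\widetilde{V}\|_\infty\|w\|_{L^2} \to 0$, coming from the equation, together with the fact that $\widetilde{H}^0_b$ carries a prefactor of order $N(N+1)/r^2$ on modes $n > N$, to make $\sup_r\|w_{>N}(r)\|_{L^2(\Sigma)}$ and $\sup_r\|w_{>N}'(r)\|_{L^2(\Sigma)}$ arbitrarily small by choosing $N$ large. Taking $m, b \to \infty$ first, then $N \to \infty$, completes the proof.

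The principal obstacle is this last step: $\|B_{2,b}/r^2\|$ blows up as $b \to \infty$, so no uniform global $H^2$-bound on $w$ is available, blocking any direct application of trace/embedding theorems. The mode-decomposition circumvents this by separating a uniformly-controlled low-mode part (where the coefficient is bounded and the $H^2\hookrightarrow C^1$ embedding is safe) from a high-mode tail whose pointwise $L^2(\Sigma)$-size is small by energy considerations.
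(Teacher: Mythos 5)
Your first step is exactly the paper's: the second resolvent identity plus the uniform bound $\|[\widetilde{P}_{(m),b}(k)]^{-1}\|\leq |\Im k|^{-2}$ from Lemma \ref{trifle}, reducing everything to $\|[(B_{2,b}-B_2)/r^2 - k\xi\widetilde{V}_{(m)}]\psi\|\to 0$, which you justify (correctly) by the boundedness of $\widetilde{V}$ and the tail of the convergent mode sum for $B_2\psi/r^2$. Where you diverge is the passage from $L^2$ convergence of $w=\psi_{(m),b}-\psi$ to convergence of $w(\tau)$ and $w'(\tau)$ in $L^2(\Sigma)$: the paper compresses this into one sentence ("compare the two equations" plus the trace theorem for $H^2(\mathbb{R}^3)$ in spherical coordinates), implicitly treating the convergence of $w''$ in $L^2$ near $r=\tau$ as immediate, whereas you correctly observe that this is not free because $\|B_{2,b}(r)/r^2\|$ is not bounded uniformly in $b$, and you supply the low-mode/high-mode splitting to get around it. Your low-mode argument (bounded coefficients for $n\leq N<b$, interior 1D regularity, $H^2(I)\hookrightarrow C^1(I)$) is clean and complete. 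The one place where your write-up is thinner than it should be is the high-mode tail: knowing $\|\widetilde{H}^0_b w\|=\|{-w''}-B_{2,b}w/r^2\|\to 0$ does not by itself separate $\|w_{>N}''\|_{L^2(I)}$ from $\|B_{2,b}w_{>N}/r^2\|_{L^2(I)}$, which is what you need for the trace of $w_{>N}'$; making this rigorous requires the usual elliptic splitting (cross term $\langle -w'',-B_{2,b}w/r^2\rangle$ handled by integration by parts using the sign of $B_{2,b}$), and one must be a little careful because $M_2(r)$ is only piecewise constant in $r$, so $(B_{2,b}(r)/r^2)'$ has jumps at the points $r_m$. Alternatively, the quadratic-form identity $\|w'\|^2+\langle -B_{2,b}w/r^2,w\rangle\leq \|g\|\,\|w\|+C\|w\|^2\to 0$ gives the first assertion directly via $\sup_r\|w(r)\|^2_{L^2(\Sigma)}\leq 2\|w\|\,\|w'\|$, and supplies the smallness of the high-mode angular energy that your tail estimate needs. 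So: same skeleton as the paper, with a genuinely more careful (and, in my view, necessary) treatment of the trace step; just tighten the high-mode energy estimate as indicated.
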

\begin{proof}
From the second resolvent identity, we have
\[
\psi-\psi_{(m),b}=[\widetilde{P}_{(m),b}(k)]^{-1}\left[k\widetilde{V}_{(d)}\psi
-\frac{B_{2,b}(r)-B_2(r)}{r^2}\psi\right]
\]
Since $\psi\in H^2(\mathbb{R}^3)$, we have
\[
\frac{B}{r^2}\,\psi\in L^2(\mathbb{R}^+,L^2(\Sigma))
\]
and therefore
\[
\|\psi-\psi_{(m),b}\|_{L^2(\mathbb{R}^+,L^2(\Sigma))}\to 0
\]
as $m,b\to\infty$. Compare two equations
\[
-\frac{d^2}{dr^2} \psi-\frac{B_2(r)}{r^2}\psi=k^2\psi+f
\]
and
\[
-\frac{d^2}{dr^2}
\psi_{(m),b}-\frac{B_{2,b}(r)}{r^2}\psi_{(m),b}+k\widetilde{V}_{(m)}\psi_{(m),b}=k^2\psi_{(m),b}+f
\]

Now, the Theorem for traces of $H^2(\mathbb{R}^3)$ functions
written in spherical coordinates yields the statement of the
Lemma.
\end{proof}

Consider spherically symmetric function $f$ having support on
$0<r<1$ such that $\|f\|_2=1$. Let
$\psi^0(r)=(\widetilde{H}^0-k^2)^{-1}f$,
$\mu^0(r)=\exp(-ikr)\psi^{0}(r)$, and $A^0(k)=\lim_{r\to\infty}
\mu^0(r)$. Since $f$ is spherically symmetric, $\mu^{0}(r)$ is
spherically symmetric as well and $A^0(k)$ is nonzero function
entire in $k$.

\begin{lemma}
Let $f$ be spherically symmetric with support in $[0,1]$,
$\alpha=2/3-$ and $\gamma>3/2-\alpha$. Then, for any $k\in
\mathbb{C}^+$ which is not zero of $A^0(k)$, there is $d>0$ such
that
\begin{equation}
\|J_{\widetilde{V}_{(d),R},b}(k,\xi)\|>\delta(k,d,V,T_1,f)>0
\label{a-below}
\end{equation}
uniformly in $R>R_0$, $b>R^\alpha$,  and $|\xi|<T_1$.
\end{lemma}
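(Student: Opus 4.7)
The plan is to combine the Duhamel representation (\ref{duhamel}) with the dynamical estimates of Lemma~\ref{twist}, following the scheme of Theorems \ref{theorem2} and \ref{theorem3} but replacing the elementary first--order control of the evolution by the more delicate iteration of Lemma~\ref{twist}. Fix $k_0\in\mathbb{C}^+$ with $A^0(k_0)\neq 0$, and pick an auxiliary radius $\rho>1$ and $d\geq\rho$ to be specified later in terms of $k_0,V,T_1,f$. Applying (\ref{duhamel}) to $\widetilde{V}_{(d),R}$ and passing to $r\to\infty$ gives
\[
J_{R,b}(k_0,\xi)=\widetilde{U}_{R,b}(\rho,\infty,k_0)\,\mu_{R,b}(\rho)-\frac{1}{2ik_0}\int_\rho^\infty \widetilde{U}_{R,b}(s,\infty,k_0)\,\mu_{R,b}''(s)\,ds,
\]
and the two terms will be estimated separately.

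The integral term I would handle by pairing with an arbitrary $\eta\in L^2(\Sigma)$ and integrating by parts once in $s$; using $\mu'(r)\to 0$ as $r\to\infty$ (a consequence of $\mu(r)\to J$ together with (\ref{rad-der})), the second derivative converts into $\mu'$ against $\partial_s\widetilde{U}^*$ plus a boundary contribution at $s=\rho$. Then (\ref{rad-der}) supplies $\int_0^\infty\|\mu_{R,b}'\|^2\,ds\leq C(k_0)$ uniformly in $R,b$, while (\ref{bound-der}) supplies $\int_\rho^\infty\|\partial_s\widetilde{U}^*(s,\infty)\eta\|^2\,ds=\bar o(1)\|\eta\|^2$ as $\rho\to\infty$, uniformly in $R>R_0,\,b>R^\alpha$. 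Cauchy--Schwarz then bounds the $L^2(\Sigma)$--norm of the integral contribution by $|k_0|^{-1}\bigl(\|\mu_{R,b}'(\rho)\|+\bar o(1)\bigr)$.

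For the principal term I would invoke Lemma~\ref{pet1}, extended uniformly to finite $R$ by a standard second--resolvent argument, to conclude that for fixed $\rho$ and $d\geq\rho$ one has $\mu_{R,b}(\rho)\to\mu^0(\rho)$ and $\mu_{R,b}'(\rho)\to (\mu^0)'(\rho)$ in $L^2(\Sigma)$ as $R,b\to\infty$. Since $f$ is spherically symmetric so is $\mu^0$, so $\mu^0(\rho)=c(\rho)\cdot 1_\Sigma$ with $c(\rho)\to A^0(k_0)\neq 0$ and $(\mu^0)'(\rho)\to 0$ as $\rho\to\infty$. To bound $\widetilde{U}_{R,b}(\rho,\infty,k_0)\cdot 1_\Sigma$ from below I would appeal to Lemma~\ref{twist}: since $B_2(s)\cdot 1_\Sigma=0$ for every $s$ and $\widetilde{V}_{(d)}\equiv 0$ on $[1,d]\supseteq[1,\rho]$, the evolution acts as the identity on the constant vector on $[1,\rho]$, so the lower bound (\ref{bound-below}) transfers directly to $\|\widetilde{U}_{R,b}(\rho,\infty,k_0)\cdot 1_\Sigma\|>\delta(k_0,\gamma,T_1)$, uniformly in $R>R_0,\,b>R^\alpha$.

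Assembling the pieces by the triangle inequality yields, for $\rho$ large, then $d\geq\rho$ sufficiently large for Lemma~\ref{twist}, and then $R,b$ large,
\[
\|J_{R,b}(k_0,\xi)\|\geq |c(\rho)|\,\delta-\|\mu_{R,b}(\rho)-\mu^0(\rho)\|-\frac{C}{|k_0|}\bigl(\|\mu_{R,b}'(\rho)\|+\bar o(1)\bigr)\geq \frac{|A^0(k_0)|\,\delta}{2}>0,
\]
which is exactly (\ref{a-below}). The main obstacle I anticipate is the coordination of the four parameters $\rho,d,R,b$ in the correct order: $\rho$ must be fixed first, large enough so that $\|(\mu^0)'(\rho)\|$ and the $\bar o(1)$ from (\ref{bound-der}) are small; then $d\geq\rho$ is chosen so that Lemma~\ref{twist} furnishes a quantitative nonvanishing of the low--mode component; only afterwards do $R,b$ become free, with the final truncation approximation coming from Lemma~\ref{pet1}. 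The delicate ingredient is Lemma~\ref{twist} itself; it plays here exactly the role that the elementary estimate (\ref{est-bel}) played in Theorem~\ref{theorem2}, and everything else is essentially bookkeeping.
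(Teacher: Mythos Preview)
Your argument is essentially the paper's own proof: Duhamel at a large radius $\rho$ (the paper calls it $\tau$), lower bound on the principal term via Lemma~\ref{twist} after approximating $\mu(\rho)$ by the free $\mu^0(\rho)=c(\rho)\cdot 1_\Sigma$, and control of the integral term by one integration by parts followed by Cauchy--Schwarz with (\ref{rad-der}) and (\ref{bound-der}). Your observation that $B_2(r)\cdot 1_\Sigma=0$ and $\widetilde V_{(d)}\equiv 0$ on $[1,\rho]$, so that $\widetilde U(1,\rho)\,1_\Sigma=1_\Sigma$ and the bound (\ref{bound-below}) transfers from starting point $1$ to starting point $\rho$, is exactly what the paper uses implicitly.

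There is, however, one misattribution of the parameter that drives the approximation. You write that for \emph{fixed} $d\geq\rho$ one has $\mu_{R,b}(\rho)\to\mu^0(\rho)$ as $R,b\to\infty$. That is false: with $d$ fixed, as $R\to\infty$ the potential $\widetilde V_{(d),R}\to\widetilde V_{(d)}\not\equiv 0$, and the limit of $\mu_{R,b}(\rho)$ is $\mu_{(d)}(\rho)$, not $\mu^0(\rho)$. The closeness to $\mu^0$ in Lemma~\ref{pet1} is produced by taking the \emph{inner} cutoff $d$ (there called $m$) large together with $b$ large; the second--resolvent identity shows this is automatically uniform in $R$ because $\|\widetilde V_{(d),R}\psi\|\leq\|\widetilde V_{(d)}\psi\|\to 0$ as $d\to\infty$. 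So the correct ordering is: choose $\rho$ large (so that $|c(\rho)|$ is close to $|A^0(k_0)|$, $\|(\mu^0)'(\rho)\|$ is small, and the $\bar o(1)$ of (\ref{bound-der}) is small); then choose $d>\rho$ large enough for \emph{both} Lemma~\ref{twist} and the approximation $\mu(\rho)\approx\mu^0(\rho)$; finally choose $R_0$ large (so that $b>R^\alpha>R_0^\alpha$ forces $b$ large), after which the estimate holds uniformly in $R>R_0$, $b>R^\alpha$. With that correction your proof coincides with the paper's.
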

\begin{proof}
Without loss of generality, we again assume that $k=i/2$,
$\xi=-2$. Then, the equation for $\mu$ can be rewritten as (we
suppress the dependence of $\mu$ on $d$, $b$, and $R$)
\[
\mu'=\frac{B_{2,b}(r)}{r^2}u +i
\widetilde{V}_{(d),R}\mu+\mu''+e^{r/2}f
\]
The support of $f$ is within the interval $(0,1)$ and we therefore
have
\[
\mu(r)=\widetilde{U}_{(d),R,b}(\tau,r) \mu(\tau)
+\int\limits_\tau^r \widetilde{U}_{(d),R,b}(\rho,r)
\mu''(\rho)d\rho
\]
By making $d$ and $b$ large, we can make sure that
$\mu_{(d),R,b}(\tau)$ is close to $\mu^0(\tau)$ uniformly in
$R>R_0$ (by Lemma \ref{pet1}). On the other hand, $\mu^0(\tau)\sim
A^0(i/2)\neq 0$ as $\tau$ is large. Then, the absolute value of
the first term can be controlled from below by the Lemma
\ref{twist}. The second term can be made arbitrarily small if
large $\tau, d, b$ are fixed and $r\to\infty$. Indeed, its limit
as $r\to\infty$ is equal to
\[
\int\limits_\tau^\infty
\widetilde{U}_{(d),R,b}(\rho,\infty)\mu''(\rho)d\rho =I_1+I_2
\]
where
\[
I_1=-\widetilde{U}_{(d),R,b}(\tau,\infty)\mu'(\tau)
\]
and
\[
I_2=-\int\limits_{\tau}^{\infty} \partial_\rho
\widetilde{U}_{(d),R,b}(\rho,\infty)\mu'(\rho)d\rho
\]
 By fixing $\tau$, $d$, and $b$ large ($\tau<d$), we can make $I_1$
arbitrarily small because $(\mu^0)'(\tau)$ tends to zero at
infinity and $\|\mu'(\tau)- (\mu^0)'(\tau)\|_2\to 0$ as
$d,b\to\infty$ (by Lemma \ref{pet1}). Thus, we are left only with
$I_2$ to estimate. We have
\[
\|I_2\|=\max_{\|\eta\|_{L^2(\Sigma)=1}}\left|\langle
\int\limits_{\tau}^{\infty} \partial_\rho
\widetilde{U}_{(d),R,b}(\rho,\infty)\mu'(\rho)d\rho,
\eta\rangle\right|\leq
\]
\[
\leq  \sup_{\|\eta\|_{L^2(\Sigma)=1}}\int\limits_{\tau}^{\infty}
\left|\langle \mu'(\rho),\partial_\rho
\widetilde{U}_{(d),R,b}^*(\rho,\infty)\eta\rangle\right| d\rho
\]

By Cauchy-Schwarz and (\ref{rad-der}), we have
\[
\|I_2\|\leq
C\sup_{\|\eta\|_{L^2(\Sigma)=1}}\left[\int\limits_\tau^{\infty}
\|\partial_\rho
\widetilde{U}_{(d),R,b}^*(\rho,\infty)\eta\|^2d\rho\right]^{1/2}
\]
and the last integral can be made arbitrarily small by choosing
$\tau$ large (see (\ref{bound-der})).
\end{proof}
{\it Proof of Theorem \ref{theorem-last}.} The proof repeats the
arguments given before. We have uniform control over $\|J_{R,b}\|$
provided by (\ref{always1}) and (\ref{a-below}). These estimates,
(\ref{another-fact}), and Lemma \ref{weak-c} allow to use the
subharmonicity argument to get necessary bounds for the entropy.
$\Box$

\section{Appendix: Combes-Thomas inequality.}
Many properties of $P(k)$ are similar to those of general
Schr\"odinger operators. For completeness of discussion on decay
of Green's function, we prove the analog of the so-called
Combes-Thomas inequality (see, e.g. \cite{kg}). It gives a general
uniform bound on Green's function of $P(k)$. We do not have to use
it to prove a.c. of the spectrum but we think it is interesting in
itself. For the next Theorem, we assume $\xi=1$.
\begin{theorem}
Let $V(x)\in L^\infty(\mathbb{R}^3)$ and $k\in \mathbb{C}^+$.
Then,
\begin{equation}
\|\chi_{|x-x_2|<1} P^{-1}(k)\chi_{|x-x_1|<1}\|_{2,2} \leq
C(k,\gamma)\exp(-\gamma |x_1-x_2|) \label{ct}
\end{equation}
for any $x_{1(2)}\in \mathbb{R}^3$ and any $\gamma\in (0,\nu\Im
k)$ (with $\nu$ -- some universal constant).
\end{theorem}
\begin{proof}
We use the standard weight. Consider any $a\in \mathbb{R}^3$ and
operator
\begin{equation}
P_a(k)=-\Delta-2a\nabla-|a|^2+k V-k^2=``e^{-ax}P(k)e^{ax}"
\label{new-a}
\end{equation}
on $H^2(\mathbb{R}^3)$. It is easy to show that this operator is
closed  and $P_a^*(k)=P_{-a}(\bar{k})$. The last equality in
(\ref{new-a}) is justified for, e.g., $H^2(\mathbb{R}^3)$
functions with compact support.

 Moreover, if $\|f\|=1$, then
\[
(P_a(k)f,f)=-(k-k_1)(k-k_2)
\]
where
\[
k_{1(2)}=\frac{c_1\pm \sqrt{c_1^2+4c_2}}{2}
\]
with
\[
c_1=\int V|f|^2dx, \quad c_2=\int |\nabla f|^2dx-|a|^2-2\int
a\nabla f \bar f dx
\]
Write
\[
c_1^2+4c_2=\alpha+i\beta
\]
where
\[
\beta=-8\Im \left[ \int a\nabla f \bar fdx\right]
\]
and
\[
\alpha=\left[\int V|f|^2dx\right]^2+4\int |\nabla f|^2dx-4|a|^2
\]
since
\[
\Re \int a\nabla f \bar fdx=0
\]
We are interested in the imaginary part of the square root of
$\alpha+i\beta$. The inequality  $\alpha\geq -4|a|^2$ is always
true. If $\alpha\leq 0$, then
\[
\int |\nabla f|^2dx\leq |a|^2
\]
and therefore
\[
|\beta|\leq 8|a|\cdot \|\nabla f\|_2\leq 8|a|^2
\]
So,
\[
\left|\Im \sqrt{\alpha+i\beta}\right|\leq
\left|\sqrt{\alpha+i\beta}\right|\leq (80)^{1/4} |a|
\]
If, on the other hand,  $\alpha>0$, then there is $\kappa\in
\mathbb{R}$, so that $\alpha=|a|^\kappa$ and
\[
\|\nabla f\|_2^2\leq |a|^2+|a|^\kappa/4
\]
For imaginary part of square root
\[
\left|\Im
\sqrt{\alpha+i\beta}\right|=\frac{2^{-1/2}|\beta|}{\left(\alpha+\sqrt{\alpha^2+\beta^2}\right)^{1/2}}
\]
and this function increases in $|\beta|$. Moreover,

\[
|\beta|\leq 8|a|\left( |a|^2+|a|^\kappa/4\right)^{1/2}
\]

Thus,
\[
\left|\Im \sqrt{\alpha+i\beta}\right|\leq
C\frac{|a|(|a|^2+|a|^\kappa)^{1/2}}
{\left(|a|^{2\kappa}+|a|^4+|a|^{2+\kappa}\right)^{1/4}}<C|a|
\]
Consequently,
\[
|\Im k_{1(2)}|\leq C_u|a|
\]
where $C_u$ is a universal constant (we believe more accurate
analysis should yield $C_u=1$). That implies, of course, that
$\sigma(P_a(k))$ lies inside the strip $|\Im k|<C_u|a|$.

\begin{lemma}
For any function $f\in L^2(\mathbb{R}^3)$ with compact support and
any $k$ outside the strip $|\Im k|<C_u|a|$, we have
\begin{equation}
\exp(ax) P^{-1}_a(k)\exp(-ax)f=P^{-1}(k)f \label{obvious}
\end{equation}
\end{lemma}
\begin{proof}
 Consider $\cal{R}={\rm Ran}\,
P(k)[\cal{L}]$, where $\cal{L}$ denotes the linear manifold of
$H^2(\mathbb{R}^3)$ functions with compact support. For any $f\in
\cal{R}$, (\ref{obvious}) is true just because the last equality
of (\ref{new-a}) is true for functions in $\cal{L}$.

Take arbitrary open ball $\Omega$ and those functions from
$\cal{L}$ that are supported inside $\Omega$. Denote the linear
manifold of these functions by $\cal{L}_{\Omega}$. Operator $P(k)$
defined on $\cal{L}_{\Omega}$ can be closed to
$P_\Omega(k)=-\Delta_0+kV-k^2$ with
$\cal{D}[P_\Omega(k)]=H^2_0(\Omega)$, where $-\Delta_0$ is Laplace
with Dirichlet b.c. on $\partial \Omega$. Let
Ran$P(k)[\cal{L}_{\Omega}]=$Ran$P_\Omega(k)[\cal{L}_{\Omega}]=\cal{R}_\Omega$.
Then, $\overline{\cal{R}_\Omega}=L^2(\Omega)$. All that can be
justified in the standard way.

Now, consider any $f\in L^2(\mathbb{R}^3)$ with support, say,
within some ball $\Omega$. One can find $f_n\in \cal{R}_\Omega$
such that $f_n\to f$ in $L^2(\Omega)$. Since any function from
$\cal{R}_\Omega$ continued to $\Omega^c$ as zero is also from
$\cal{R}$, (\ref{obvious}) is true for each $f_n$. On the other
hand, $f_n$ is supported within $\Omega$ and therefore
$\exp(-ax)f_n\to \exp(-ax) f$ in $L^2(\mathbb{R}^3)$. So,
\[
P^{-1}_a(k)\exp(-ax) f_n\to P^{-1}_a(k) \exp(-ax)f, \,
P^{-1}(k)f_n\to P^{-1}(k)f
\]
where the convergence is in $L^2(\mathbb{R}^3)$. Thus, for
arbitrary $h\in L^2(\mathbb{R}^3)$ with compact support
\[
\langle \exp(ax) P^{-1}_a(k)\exp(-ax)f_n, h\rangle\to \langle
\exp(ax) P^{-1}_a(k) \exp(-ax) f, h\rangle
\]
and
\[
\langle \exp(ax) P^{-1}_a(k) \exp(-ax) f,h\rangle =\langle
P^{-1}(k) f,h\rangle
\]
Since $h$ was arbitrary and $\exp(ax) P^{-1}_a(k) \exp(-ax) f$ is
in $L^2_{\rm loc}$ apriori, we have the statement of the Lemma.
\end{proof}
To finish the proof of the Theorem, assume $x_1=0$ without loss of
generality. Then, take $a=-|a|x_2/|x_2|$ with $|a|<\nu \Im k$,
$\nu=C_u^{-1}$. Let $f$ be any $L^2$ function supported within the
unit ball around $0$. One can then use Lemma and bound on
$\|P_a^{-1}(k)\|$ to get (\ref{ct}).
\end{proof}

\end{document}